\documentclass[11pt]{article}

\usepackage[letterpaper,margin=1in]{geometry}
\usepackage{amsmath, amsfonts, amssymb, amsthm, amsxtra}
\usepackage{mathrsfs, mathtools}
\usepackage{bbm}
\usepackage{bm}

\usepackage[shortlabels]{enumitem}
\usepackage[dvipsnames]{xcolor}
\usepackage[ruled]{algorithm2e}
\usepackage[colorlinks=true,linkcolor=blue,citecolor=blue,urlcolor=blue]{hyperref}
\usepackage{comment}

\usepackage{booktabs}
\usepackage{multirow}
\usepackage{tikz}
\usepackage{tkz-graph}
\usetikzlibrary{arrows.meta,positioning,decorations.pathmorphing,calc}
\usepackage{subcaption}

\usepackage{wrapfig}

\usepackage{cypriot}

\newtheorem{theorem}{Theorem}[section]

\newtheorem{lemma}[theorem]{Lemma}
\newtheorem{corollary}[theorem]{Corollary}

\newtheorem{remark}[theorem]{Remark}
\newtheorem{proposition}[theorem]{Proposition}

\newcommand{\cB}{\mathcal{B}}

\newcommand{\cE}{\mathcal{E}}

\newcommand{\cG}{\mathcal{G}}
\newcommand{\cH}{\mathcal{H}}

\newcommand{\cN}{\mathcal{N}}

\newcommand{\cZ}{\mathcal{Z}}

\newcommand{\EE}{\mathbb{E}}

\newcommand{\PP}{\mathbb{P}}

\newcommand{\RR}{\mathbb{R}}

\newcommand{\ZZ}{\mathbb{Z}}

\newcommand{\bi}{\bm{i}}

\newcommand{\bS}{\bm{S}}

\newcommand{\ib}{\mathbf{i}}

\newcommand{\xb}{\mathbf{x}}

\newcommand{\zb}{\mathbf{z}}

\newcommand{\Gb}{\mathbf{G}}

\newcommand{\Wb}{\mathbf{W}}
\newcommand{\Xb}{\mathbf{X}}
\newcommand{\Yb}{\mathbf{Y}}
\newcommand{\Zb}{\mathbf{Z}}

\newcommand{\R}{\mathbbm{R}}

\newcommand{\E}{\operatorname{\mathbbm{E}}}

\newcommand{\1}{\boldsymbol{1}}

\makeatletter
\DeclareRobustCommand\Equiv{\mathrel{%
  \mathchoice
    {\Equiv@\textfont\displaystyle{.45}}
    {\Equiv@\textfont\textstyle{.45}}
    {\Equiv@\scriptfont\scriptstyle{.5}}
    {\Equiv@\scriptscriptfont\scriptscriptstyle{.55}}
}}
\newcommand{\Equiv@}[3]{%
  \rlap{\raisebox{#3\fontdimen5#12}{$\m@th#2 = $}}%
  \raisebox{-#3\fontdimen5#12}{$\m@th#2 = $}%
}
\makeatother

\newcommand{\tr}{\operatorname{\mathsf{Tr}}}

\newcommand{\ud}{\mathrm{d}}

\newcommand{\argmax}{\operatorname*{argmax}}

\newcommand{\Cov}{\operatorname{\mathsf{Cov}}}

\newcommand{\Var}{\operatorname{\mathsf{Var}}}

\newcommand{\MMSE}{\operatorname{\mathsf{MMSE}}}
\newcommand{\YMMSE}{\operatorname{\mathsf{Y\text{-}MMSE}}}

\newcommand{\bitm}{\begin{itemize}[leftmargin=*]}
\newcommand{\eitm}{\end{itemize}}

\newcommand{\benm}{\begin{enumerate}[leftmargin=*]}
\newcommand{\eenm}{\end{enumerate}}

\definecolor{forestgreen}{rgb}{0.13, 0.55, 0.13}

\def\abs#1{\left| #1 \right|}
\newcommand{\norm}[1]{\left\lVert#1\right\rVert}

\newcommand{\ceil}[1]{\left\lceil\, {#1}\,\right\rceil}
\newcommand{\inparen}[1]{\left(#1\right)}             
\newcommand{\inbraces}[1]{\left\{#1\right\}}           
\newcommand{\insquare}[1]{\left[#1\right]}             
\newcommand{\inangle}[1]{\left\langle#1\right\rangle} 

\makeatletter
\newcommand{\leqnomode}{\tagsleft@true\let\veqno\@@leqno}
\newcommand{\reqnomode}{\tagsleft@false\let\veqno\@@eqno}
\makeatother

\newcommand{\postGeomMPartition}{\ensuremath{\cZ_{n,d}}}

\title{Geometric planted matchings in high dimensions: The power of multiple views}

\author{Timothy L.\ H.\ Wee\thanks{School of Mathematics, Georgia Institute
of Technology, timothy.wee@gatech.edu} \and
Kaylee Y.\ Yang\thanks{Department of Statistics and Data Science,
Yale University, yingxi.yang@yale.edu} \and
Zhou Fan\thanks{Department of Statistics and Data Science, Yale
University, zhou.fan@yale.edu} \and
Cheng Mao \thanks{School of Mathematics, Georgia Institute
of Technology, cheng.mao@math.gatech.edu}
}

\date{}

\begin{document}

\maketitle

\begin{abstract}
We study the problem of recovering the correspondence between a collection of $n$ points in $\mathbb{R}^d$ and a noisy, permuted version of those points. In the high-dimensional regime $d=\omega(\log n)$, under a Gaussian model with noise variance $\sigma^2=d/(b\log n)$, prior work identifies $b=2$ as the threshold for almost exact recovery. We prove that this threshold is all-or-nothing: for every fixed $b<2$, no estimator recovers a positive fraction of the matching, and even estimating the matched point cloud in Euclidean distance is asymptotically no better than ignoring the correspondence. On the other hand, we consider a multi-view generalization of the problem where $K$ noisy, independently permuted copies of the same latent point cloud are observed. Here we show that a simple polynomial-time procedure recovers all relative matchings up to $o(n)$ errors whenever $b>K/(K-1)$. Thus multiple views can break the impossibility barrier $b=2$ for the original matching problem: in particular, for $3/2 < b < 2$, the two-view model has no nontrivial recovery, but a third view makes all latent correspondences efficiently recoverable.
\end{abstract}

\tableofcontents

\section{Introduction}

Let $X_1,\dots,X_n$ be a point cloud consisting of independent standard Gaussian vectors in $\RR^d$. In the \emph{geometric planted matching} model, one observes the original point cloud as well as a noisy permuted copy
\begin{equation}\label{eq:intro_two_view}
    Y_i=X_{\Pi_*(i)}+\sigma Z_i,\qquad i=1,\dots,n,
\end{equation}
where the noise $Z_1,\dots,Z_n$ are independent standard Gaussian vectors in $\RR^d$, $\sigma$ is the noise scaling, and $\Pi_*$ is an unknown uniformly random permutation, viewed as a bijection on $[n]$. The goal is to recover $\Pi_*$. 

This model captures the fundamental task of identifying unknown correspondences between unordered objects from noisy features. Such matching problems arise across science and engineering, including in single-cell data integration and multi-omics alignment \cite{haghverdi2018batch,demetci2022scot,chen2022onewayLowRankMatching}, particle tracking \cite{chertkov2010inference}, image matching \cite{ma2021image}, record linkage \cite{sayers2016probabilistic}, and network alignment \cite{bayati2009algorithms}. Across these settings, the same statistical and algorithmic question recurs: what level of noise can the observations tolerate so that the latent correspondence remains recoverable, and when can this be done efficiently?

Our results and techniques pertain to the high-dimensional regime where $d=\omega(\log n)$ and where the noise scales as
\[
    \sigma^2=\frac{d}{b\log n},
\]
where $b > 0$ is a fixed  parameter.
This is the critical scaling in which sharp recovery transitions occur at constant values of $b$.

Much of the existing theory for geometric planted matching has focused on exact or \emph{almost exact recovery}, where one must identify all but $o(n)$ of the matched pairs. Prior results \cite{kunisky2022strong,dai2023gaussian,wang2022random} place the almost exact recovery threshold at $b=2$. However, this still leaves open the nature of the recovery landscape below the threshold for this canonical model. In particular, for $b < 2$, it is plausible that an estimator might still recover a positive fraction of the matching, or might localize the matched point cloud well in Euclidean distance, producing a ``something'' phase that straddles between almost exact recovery and total failure.

In this paper we resolve this uncertainty. Our results rule out the presence of a ``something'' phase for $b < 2$. Instead we show that the model is in a strong ``nothing'' phase: the posterior assigns exponentially small mass to the set of permutations that agree with the truth on any fixed positive fraction of indices. In particular, no estimator can achieve nonvanishing overlap with the true permutation $\Pi_*$. Furthermore, this failure is not only combinatorial: even if one only asks to reconstruct the matched point cloud $(X_{\Pi_*(1)},\dots,X_{\Pi_*(n)})$ in Euclidean distance, the observations are asymptotically no more informative than in the case where the correspondence is completely absent.  Thus, for $b<2$, the data support neither weak recovery of the permutation nor geometrically accurate matching of the points.

On the other hand, multiple views reveal a different side of the same matching problem. Informally, instead of one noisy copy, we observe $K$ independently relabeled noisy views of the same point cloud $X_1,\dots,X_n$,
\begin{equation}\label{eq:intro_k_view}
    Y_i^{(a)}=X_{\bar{\Pi}_*^{(a)}(i)}+\text{independent Gaussian noise},
    \qquad
    i=1,\dots,n,\quad a=0,\dots,K-1,
\end{equation}
where the $\bar{\Pi}_*^{(a)}$'s are independent uniformly random permutations. The original point cloud itself is not observed, so the absolute labels of the latent points are unidentifiable; the task is instead to determine which rows in different views correspond to the same latent point. For $K=2$, this coincides with \eqref{eq:intro_two_view} after choosing one view as the reference and passing to the relative permutation. This can be viewed as a stylized multi-view registration problem, abstracting settings such as multi-sensor point-cloud fusion \cite{pomerleau2015review}, record linkage across multiple databases \cite{christen2012data}, single-particle cryo-EM \cite{bendory2020single}, or matching high-dimensional embeddings of the same entities across several modalities \cite{demetci2022scot,chen2022onewayLowRankMatching}.

The effect of multiple views is not obvious a priori. Additional observations of the same latent point cloud are obtained, but more latent matchings have to be estimated. At the same time, multiple views create consistency checks that are absent from the two-view problem: a proposed tuple of matches must agree across many pairwise comparisons. Our results show that the balance comes out favorably: for fixed $K$, and with an analogous high-dimensional scaling indexed by $b$, a polynomial-time algorithm achieves almost exact recovery for $b>K/(K-1)$. Already for $K=3$, this gives a clean illustration of the gain from multiple views: for $3/2<b<2$, two views give us nothing, but an additional third view breaks this barrier and permits efficient recovery of \emph{all} relative matchings.

As by-products of our arguments, we also obtain analogous negative results for a ``$d=\infty$'' Gaussian weighted matching model of standalone interest, where geometric correlations are replaced by independent Gaussian noise. In addition, by data processing, our negative results transfers over to geometric \emph{graph} matching models \cite{wang2022random} that are stochastically degraded variants of \eqref{eq:intro_two_view}.

\subsection{Related work}

\paragraph{Planted matchings.}

The overarching goal in these problems is to uncover a hidden correspondence between two collections of objects based on noisy observations. A mathematical formulation arose in the context of particle tracking \cite{chertkov2010inference}, where geometric observations determine the likelihood of pairing particles across images. A parallel, non-geometric line studied independent-edge planted matching models \cite{semerjian2020recovery,moharrami2021planted,ding2023planted,wee2025cluster,addarioberry2026statistical}. Closer to the geometric motivation, \cite{kunisky2022strong,dai2023gaussian} studied Gaussian point-cloud matching, with related work going beyond Gaussianity \cite{schwengber2024geometric}, models with low-rank signals 
\cite{chen2022onewayLowRankMatching}, and
uncertainty quantification in match probabilities \cite{fan2026bayesian}. A multiple view setting in the context of non-geometric Gaussian graph alignment was studied in \cite{even2025statistical}.

\paragraph{High-dimensional geometric matching.}

The geometric matching model \eqref{eq:intro_two_view} in the high-dimensional setting of $d = \omega(\log n)$ and $\sigma^2=d/(b\log n)$ has been studied by \cite{kunisky2022strong,dai2023gaussian, wang2022random}. A major focus in \cite{kunisky2022strong,dai2023gaussian} is the maximum likelihood estimator (MLE), which corresponds to the solution of a linear assignment problem with weights induced by the two observed point clouds.

For $b>4$, Kunisky and Niles-Weed~\cite{kunisky2022strong} proves that the MLE achieves exact recovery. For $b < 4$, they instead conjectured that the MLE makes $\Omega(n)$ errors. Additionally, a heuristic analysis of a row-wise inner-product procedure is provided, predicting almost exact recovery for $b>2$.

Dai et al.~\cite{dai2023gaussian} studied an asymptotically equivalent setting from the perspective of database alignment. Their results determine polynomial error rates for the MLE throughout the intermediate regime $2<b<4$. In particular, \cite{dai2023gaussian} asserts that the MLE attains almost exact recovery for every $b>2$. This contrasts with the high-dimensional conjectural picture in \cite{kunisky2022strong}, while agreeing with the proved exact recovery threshold at $b=4$.

Wang et al.~\cite{wang2022random} studied geometric graph observations generated from latent point clouds \eqref{eq:intro_two_view}. In order to prove negative results, \cite{wang2022random} gave a necessary condition ruling out almost exact recovery for $b < 2$ in the more stochastically informative model \eqref{eq:intro_two_view}, agreeing with and supplementing the results of \cite{kunisky2022strong,dai2023gaussian}. Our results strengthen this by ruling out the possibility of nontrivial recovery for $b < 2$. Furthermore, our results also rule out a ``something'' phase for the geometric graph models of \cite{wang2022random} below the critical threshold. Our proofs are based on a conditional second moment method which differs from the rate distortion approach in \cite{wang2022random}.

\section{Main results}

\subsection{Two-view geometric planted matching}
\label{sec:twoView_GM}

We start by formally defining the model. 
Let $\mathscr{P}_n$ denote the set of $n \times n$ permutation matrices.
In the geometric planted matching problem we observe
\begin{equation}\label{eq:GM_def}
    \Xb \qquad\text{and}\qquad \Yb = \Pi_* \Xb + \sigma \Zb,
\end{equation}
where $\Pi_*$ is an $n \times n$ uniformly random permutation matrix in $\mathscr{P}_n$, where $\Xb$ and $\Zb$ are $n \times d$ matrices with i.i.d.\ standard Gaussian entries, and where $\sigma > 0$ is a noise parameter. We think of the rows of $\Xb$ as an $n$-point cloud in $\RR^d$, so that the rows of $\Yb$ are a noisy, randomly permuted version of those points. Here and throughout, we identify a permutation matrix $\Pi$ with its associated bijection, and abuse notation by writing $\Pi(i)$ for the unique $j$ such that $\Pi_{ij} = 1$. Thus \eqref{eq:GM_def} is equivalent to the row-wise formulation \eqref{eq:intro_two_view} in the introduction.

We are interested in the ``high-dimensional'' regime where
\begin{equation}\label{eq:GM_highDimensions_def}
    \sigma^2 = \frac{d}{b \log n} \qquad\text{and}\qquad d = \omega(\log n),
\end{equation}
where $b > 0 $ is a fixed parameter. In particular $\sigma^2 \to \infty$.

Our main message for this model is that $b=2$ is a sharp ``all-or-nothing'' phase transition for recovering $\Pi_*$. The ``all'' side refers to almost exact recovery (AER): producing an estimator $\widehat{\Pi}$ whose overlap with the truth satisfies $\tr \widehat{\Pi}\Pi_*^\top/n \to 1$, or equivalently whose number of mismatches is $o(n)$. For $b>2$, this positive side can be deduced from prior work \cite{kunisky2022strong,dai2023gaussian,wang2022random}. 

The novel content is the ``nothing'' side for $b<2$. Here we show that not only does AER fail, but no estimator can recover even a positive fraction of the matching. In fact, we show a stronger statement: the posterior assigns exponentially small mass to permutations whose overlap with the truth is bounded away from zero. Consequently, we derive two information-theoretic impossibility statements, one for estimating the discrete permutation and one for estimating the matched point cloud in Euclidean distance.

Let $\inangle{\cdot}$ denote the expectation with respect to the posterior distribution of $\Pi_*$, given $\Xb$ and $\Yb$. This posterior is given as follows: for any function $f$ on permutation matrices in $\mathscr{P}_n$,
\begin{align}\label{eq:GM_posterior_def}
    \inangle{f} = \frac{1}{\postGeomMPartition(\sigma)} \sum_{\Pi \in \mathscr{P}_n} f(\Pi) \exp \inbraces{\frac{1}{\sigma^2} \tr \Pi \Xb \Yb^\top}, \quad \postGeomMPartition(\sigma) = \sum_{\Pi \in \mathscr{P}_n} \exp \inbraces{\frac{1}{\sigma^2} \tr \Pi \Xb \Yb^\top}.
\end{align}

\begin{theorem}\label{thm:GM_AoN}
Let $\Pi$ denote a sample from the posterior distribution. Then the following hold.
\begin{enumerate}[(i)]
    \item Suppose $b < 2$ is fixed. For any fixed $\delta \in (0,1]$, there exists a constant $C_{b,\delta} > 0$ such that, for all large $n$, 
    \begin{align}\label{eq:GM_AoN}
        \EE \inangle{\1\!\inbraces{\tr \Pi_* \Pi^\top \geq \delta n }}
        \leq \exp\inparen{-C_{b,\delta}n\log n}.
    \end{align}
    \item Suppose $b>2$ is fixed. Then for any fixed $\delta\in(0,1)$,
    \[
        \EE \inangle{\1\!\inbraces{\tr \Pi_* \Pi^\top \leq (1-\delta)n}}
        \to 0.
    \]
\end{enumerate}
\end{theorem}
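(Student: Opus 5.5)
Part (i) is the substantive claim, and we prove it by a first-moment bound on the posterior overlap, made effective through a conditioning (planting) argument. By Nishimori symmetry, $(\Pi_*,\Pi)$ has the same law as two independent posterior samples. In particular $\EE\inangle{\1\{\tr\Pi_*\Pi^\top\ge\delta n\}}$ equals the expected posterior mass of the set $\{\Pi:\tr\Pi_*\Pi^\top\ge\delta n\}$. Write $\Pi_*=I$ without loss of generality and set $W_{ij}=\sigma^{-2}\inangle{X_i,Y_j}$. Then
\[
\inangle{\1\{\cdot\}}=\frac{\sum_{\Pi:\,|\mathrm{fix}(\Pi)|\ge\delta n} e^{\sum_i W_{i\Pi(i)}}}{\postGeomMPartition(\sigma)}.
\]
Split each such $\Pi$ by its fixed set $S$, with $|S|=k\ge\delta n$. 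The numerator is then at most
\[
\sum_{S} e^{\sum_{i\in S}W_{ii}}\cdot \postGeomMPartition^{(S^c)},
\]
where $\postGeomMPartition^{(S^c)}$ is the partition function restricted to the rows and columns in $S^c$. The denominator is bounded below by $\postGeomMPartition^{(S^c)}\cdot \postGeomMPartition^{(S)}$, using block-diagonal permutations. The ratio is therefore at most
\[
\sum_{|S|\ge\delta n} e^{\sum_{i\in S}W_{ii}}/\postGeomMPartition^{(S)}.
\]
It suffices to show that, with probability $1-e^{-cn\log n}$ (and trivially bounding the ratio by $1$ otherwise), the bound
\[
\postGeomMPartition^{(S)}\ge e^{\sum_{i\in S}W_{ii}}\cdot e^{c\,k\log n}
\]
holds simultaneously for all $S$ with $|S|=k\ge\delta n$. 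The number of sets $S$ is at most $2^n$, which is negligible against $e^{ck\log n}$.

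The core estimate is a lower bound on $\log\postGeomMPartition^{(S)}$. We compute scales first. The diagonal term satisfies $W_{ii}=\sigma^{-2}(\|X_i\|^2+\sigma\inangle{X_i,Z_i})$. Its centered fluctuation is negligible, so summed over $S$ it concentrates at $k\,d/\sigma^2=k b\log n$ up to $o(k\log n)$. This uses $d=\omega(\log n)$, which gives $\sigma^{-1}\sqrt d=\sqrt{b\log n}$ and fluctuations of order $\sqrt{k b\log n}$ per sum. After subtracting row and column effects such as $\|X_i\|^2$, which are common to all $\Pi$ and cancel, the off-diagonal $W_{ij}$ behave like approximately independent $N(0,b\log n/\sigma^{2}\cdot\ldots)$ entries. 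More precisely, $\inangle{X_i,Y_j}/\sigma^2$ for $i\ne j$ has variance $\approx d\sigma^2/\sigma^4=b\log n$. We lower bound $\postGeomMPartition^{(S)}$ by its conditional second moment: condition on $X$ and on the norms, and compare with the annealed value $\EE\postGeomMPartition^{(S)}\approx k!\,e^{k\cdot b\log n/2}$. This gives
\[
\log\postGeomMPartition^{(S)}\approx k\log k+\tfrac{b}{2}k\log n = (1+b/2)k\log n.
\]
That exceeds the planted contribution $bk\log n$ exactly when $b<2$, which produces the constant $c=(1-b/2)-\epsilon$.

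The main obstacle is proving the lower bound $\log\postGeomMPartition^{(S)}\ge(1+b/2-\epsilon)k\log n$ with failure probability $e^{-\Omega(n\log n)}$, uniformly over $S$. For this we use the conditional second moment method. We restrict $\postGeomMPartition$ to "typical" permutations, truncating away contributions from permutations whose overlap with the planted matching or with each other is atypical, and show
\[
\EE[\tilde{\postGeomMPartition}^2]\le e^{o(k\log n)}(\EE\tilde{\postGeomMPartition})^2 .
\]
Paley–Zygmund alone only gives constant probability. To upgrade to a super-exponentially small failure probability, we use Gaussian concentration. The quantity $\log\postGeomMPartition^{(S)}$ is a Lipschitz function of $(\Xb,\Zb)$ after truncation to $\max_i\|X_i\|,\|Z_i\|\le C\sqrt d$, with Lipschitz constant $O(\sqrt{k}\,\sqrt d/\sigma)=O(\sqrt{k\log n})$, and Gaussian concentration is at scale $k\log n$ with probability $e^{-\Omega(k\log n)}$. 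Hence the constant-probability lower bound transfers to the median and then to the high-probability statement.

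Part (ii) follows from known almost exact recovery. The results of \cite{kunisky2022strong,dai2023gaussian,wang2022random} give an estimator $\widehat\Pi$ with overlap $1-o(1)$ in probability. The posterior sample is at least as good up to a factor $2$ in mismatch fraction: by Nishimori, $\EE\inangle{\text{mismatch}(\Pi,\Pi_*)}\le 2\,\EE\,\text{mismatch}(\widehat\Pi,\Pi_*)$, using the triangle inequality for Hamming distance on permutations. Markov's inequality then gives the claim.
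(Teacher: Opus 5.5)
\textbf{What is correct, and how it differs from the paper.} Your reduction in (i) is correct and takes a different route from the paper. Block-diagonal permutations give $\cZ_{n,d}(\sigma)\ge \cZ^{(S)}\cZ^{(S^c)}$, so the derangement part cancels exactly. You then only need \emph{lower} bounds on subsystem free energies, uniformly over at most $2^n$ sets $S$ with $|S|=k\ge\delta n$. That union bound is affordable because the free energy concentrates with failure probability $e^{-ck\log n}$.

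The paper instead bounds the restricted free energy $\EE\Psi_\sigma(\delta)$ from above (Proposition~\ref{prop:GM_Psi_UB}, via Jensen and the cycle-structure spectral computation) and the full free energy from below. Your comparison makes Proposition~\ref{prop:GM_Psi_UB} unnecessary and yields essentially the same exponent $(1-b/2)\delta$. This works provided the lower bound in Proposition~\ref{prop:GM_full_free_energy_subcritical} holds for $k$-point subsystems, which it does since $\log k=\log n+O(1)$. Your part (ii) is also fine: the Hamming triangle inequality plus Nishimori is equivalent to the paper's posterior-mean argument.

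\textbf{The gap.} The subsystem lower bound $\log\cZ^{(S)}\ge(1+b/2-\epsilon)k\log n$ is the hardest part of the paper, and your sketch of it would not work as written. Three ingredients are missing.
\begin{enumerate}
\item \emph{The signal.} With the signal kept, $\EE\cZ^{(S)}$ is not $k!\,e^{(b/2)k\log n}$. The identity term alone has mean $(1-3/\sigma^2)^{-kd/2}=e^{(3b/2+o(1))k\log n}$, which dominates $e^{(1+b/2)k\log n}$ once $b>1$. You need to discard the signal uniformly in $\Pi$ and work with the noise-only partition function. The paper does this by showing $\min_\Pi\sigma^{-2}\tr\Pi\Xb\Xb^\top\ge-\eta n\log n$ off an event of probability $e^{-cnd}$.
\item \emph{The truncation.} For $1<b<2$, already for the noise-only partition function, no restriction of the sum to a class $\mathcal{C}$ of permutations can give $\EE\tilde\cZ^2\le e^{o(k\log n)}(\EE\tilde\cZ)^2$. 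The diagonal pairs $\Pi=\Pi'$ alone give a ratio $e^{(b+o(1))k\log n}/|\mathcal{C}|\ge e^{(b-1+o(1))k\log n}$. Also, ``overlap with each other'' is a property of pairs and cannot be truncated inside $\tilde\cZ$.

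The truncation has to act on the disorder. The paper uses the event $\cE$, which caps $\max_{S\in\mathscr{P}_m}(dm)^{-1/2}\tr S\Xb\Zb^\top$ at $\sqrt{(2+\epsilon)(m\vee c_0n)\log n}$ for every $m$; a per-permutation version would also do. Under this cap, a shared sub-matching of size $m$ contributes at most $e^{(2\sqrt{b(2+\epsilon)}-(2+\epsilon)/2)m\log n}$ instead of $e^{(2b+o(1))m\log n}$. The $1/m!$ entropy then wins when $\gamma_{b,\epsilon}<1$.
\item \emph{Preserving the first moment.} You must show the truncation does not destroy the first moment. This is delicate because $\EE e^{\sqrt{b\log n}\,\tr\Pi W}$ is carried by disorder in which $\tr\Pi W$ is atypically large, about $\sqrt b\,n\sqrt{\log n}$. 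The paper handles this with Lemma~\ref{lemma:GM_highProbEvent_conditional_corrected}: $\cE$ survives conditioning on $|\tr\Pi W|\le n\sqrt{2\log n}$. This is exactly where $b<2$ enters.
\end{enumerate}
None of these three ingredients appears in your proposal.

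\textbf{Two smaller points.} First, with ratio $e^{o(k\log n)}$, Paley--Zygmund gives probability only $e^{-o(k\log n)}$, not a constant. The transfer therefore goes by intersecting with the concentration event (failure probability $e^{-c\eta^2k\log n}$), not through the median. Second, truncating to $\max_i\|X_i\|,\|Z_i\|\le C\sqrt d$ costs probability of order $ne^{-cd}$. When only $d=\omega(\log n)$ is assumed, this is far larger than $e^{-Cn\log n}$ and would spoil the stated rate. Use a Frobenius-norm ball instead (cost $e^{-cnd}$), as the paper does.
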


The negative result (i) is proved in Appendix \ref{sec:GM_AoN_proofs}. The positive result (ii) follows from the $K=2$ specialization of Theorem \ref{thm:Kway_positive_AER} below, whose proof is given in Appendix \ref{sec:Kway_positive_AER_proofs}, and can also be deduced from prior almost exact recovery results \cite{kunisky2022strong,dai2023gaussian,wang2022random}. Nevertheless, for completeness, we give formal proofs for both parts of Theorem \ref{thm:GM_AoN} in Appendices \ref{sec:GM_AoN_proofs} and \ref{sec:Kway_positive_AER_proofs}.

We now translate this posterior overlap statement into information-theoretic impossibility statements. Let $\inangle{\Pi} = \EE[\Pi \mid \Xb, \Yb]$ denote the posterior mean. 
Define
\begin{align*}
    \MMSE(b) \coloneqq \inf_{\widehat{\Pi}(\Xb, \Yb)} \EE \norm{\widehat{\Pi}(\Xb, \Yb) - \Pi_*}_F^2 = \EE \norm{\inangle{\Pi} - \Pi_*}_F^2,
\end{align*}
where the infimum ranges over all possible estimators. Thus $\MMSE(b)$ is the minimum achievable mean-squared error for recovering the planted matching based on $(\Xb,\Yb)$.

We also consider a geometric version of this error, in which the goal is not to recover the permutation itself, but rather the matched point cloud $\Pi_*\Xb$. Define
\begin{equation}\label{eq:GM_YMMSE_def}
    \YMMSE(b)
    \coloneqq
    \inf_{\widehat{\bS}(\Xb,\Yb)}
    \mathbb{E}
    \left\|
        \widehat{\bS}(\Xb,\Yb)-\Pi_*\Xb
    \right\|_F^2,
\end{equation}
where the infimum ranges over all estimators of the matched point cloud.

A computation shows that the mean-squared error for recovering $\Pi_*$ achieved by the dummy estimator $\EE\Pi_*=\frac{1}{n}\boldsymbol{1}\boldsymbol{1}^{\top}$ is $n-1$. Similarly the mean-squared error for recovering the matched point cloud $\Pi_* \Xb$ achieved by the dummy estimator $\frac1n\boldsymbol{1}\boldsymbol{1}^{\top}\Xb$ is $(n-1)d$. Our next result shows that for $b < 2$, asymptotically, we can do no better than these dummy estimators.
\begin{corollary}\label{cor:GM_infoTheoretic_nothing}
Suppose $0<b<2$ is fixed. Then the following occur.
\begin{enumerate}[(i)]
    \item We have $\frac{\MMSE(b)}{n} \to 1$.
    \item We have
    $
        \frac{\YMMSE(b)}{nd}\to 1.
    $
\end{enumerate}
\end{corollary}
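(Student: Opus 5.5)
Both parts will be deduced from Theorem~\ref{thm:GM_AoN}(i) together with the Nishimori identity. The identity says $(\Xb,\Yb,\Pi_*)$ and $(\Xb,\Yb,\Pi)$ have the same law when $\Pi$ is a posterior sample.

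\emph{Part (i).} The upper bound $\MMSE(b)\le n-1$ comes from the dummy estimator. For the lower bound, expand
\[
\EE\norm{\inangle{\Pi}-\Pi_*}_F^2 = n - 2\EE\tr\inangle{\Pi}\Pi_*^\top + \EE\norm{\inangle{\Pi}}_F^2 .
\]
By Nishimori, $\EE\norm{\inangle{\Pi}}_F^2 = \EE\inangle{\tr \Pi\Pi'^\top}$, where $\Pi,\Pi'$ are independent posterior samples, and this equals $\EE\inangle{\tr \Pi\Pi_*^\top}$. Hence $\MMSE(b)=n-\EE\inangle{\tr\Pi_*\Pi^\top}$. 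Fix $\delta\in(0,1]$ and split on the event $\{\tr\Pi_*\Pi^\top\ge\delta n\}$. Since the overlap is always at most $n$, Theorem~\ref{thm:GM_AoN}(i) gives
\[
\EE\inangle{\tr\Pi_*\Pi^\top}\le \delta n + n e^{-C_{b,\delta}n\log n}.
\]
So $\MMSE(b)/n\ge 1-\delta-o(1)$, and letting $\delta\to0$ proves (i).

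\emph{Part (ii).} The dummy estimator gives $\YMMSE(b)\le(n-1)d$. The optimal estimator is $\widehat\bS=\inangle{\Pi}\Xb$, because $\Pi_*\Xb$ depends on the unknown only through $\Pi_*$. Applying the same Nishimori argument,
\[
\YMMSE(b)=\EE\tr(\Pi_*\Xb\Xb^\top\Pi_*^\top)-\EE\inangle{\tr(\Pi_*\Xb\Xb^\top\Pi^\top)} = nd - \EE\inangle{\langle \Pi_*\Xb,\Pi\Xb\rangle_F}.
\]
It therefore suffices to show $\EE\inangle{\sum_i \langle X_{\Pi_*(i)},X_{\Pi(i)}\rangle}=o(nd)$. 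Write $\sigma=\Pi^{-1}\Pi_*$, so that this sum is $\sum_j\langle X_j,X_{\sigma(j)}\rangle$. Split it into fixed points and non-fixed points of $\sigma$.

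The fixed-point part is $\sum_{j:\sigma(j)=j}\norm{X_j}^2$. On the event that $\max_j\norm{X_j}^2\le d(1+o(1))$, which holds with overwhelming probability since $d=\omega(\log n)$, this is at most $(1+o(1))d\,\tr\Pi_*\Pi^\top$. Theorem~\ref{thm:GM_AoN}(i) then bounds it by $(\delta+o(1))nd$. On the complementary event, use Cauchy--Schwarz together with the tiny probability of that event.

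The main obstacle is the non-fixed-point part, $\sum_{j:\sigma(j)\ne j}\langle X_j,X_{\sigma(j)}\rangle$. The posterior is correlated with $\Xb$, so we cannot simply argue that these inner products have mean zero. I would first try a uniform bound: with high probability $\max_{j\ne k}|\langle X_j,X_k\rangle|\le C\sqrt{d\log n}$, and this is $o(d)$ because $d=\omega(\log n)$. Summing over at most $n$ indices gives $o(nd)$ deterministically on this event, so the posterior can do no damage. The complementary event is handled by Cauchy--Schwarz against fourth moments, which are polynomial in $n$ and $d$, times the probability of the event, which is at most $n^{-10}$ after choosing $C$ large. Combining the two parts gives $\YMMSE(b)/(nd)\ge1-\delta-o(1)$, and sending $\delta\to0$ finishes (ii).
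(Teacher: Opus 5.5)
Your proposal is correct. Part (i) is essentially the paper's argument: the Nishimori/tower identity gives $\MMSE(b)=n-\EE\inangle{\tr\Pi_*\Pi^\top}$, and Theorem~\ref{thm:GM_AoN}(i) makes the expected overlap $o(n)$.

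Part (ii), however, takes a genuinely different route from the paper.

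\emph{The paper's route.} It never looks at the geometry of $\Xb$. It writes $\frac{1}{n\log n}I(\Pi_*;\Yb/\sigma\mid\Xb)=\frac{\log n!}{n\log n}+b-\EE\widetilde{\Psi}_\sigma$ and uses Proposition~\ref{prop:GM_full_free_energy_subcritical} to get the limit $b/2$ for every $b\in(0,2)$. It then identifies the $b$-derivative of this quantity with $\YMMSE(b)/(2nd)$ via the I-MMSE identity, and passes to the limit using concavity.

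\emph{Your route.} You write $\YMMSE(b)=nd-\EE\inangle{\langle\Pi_*\Xb,\Pi\Xb\rangle_F}$ and exploit near-orthogonality of the rows: $\max_j\norm{X_j}_2^2=(1+o(1))d$ and $\max_{j\neq k}\abs{X_j^\top X_k}=O(\sqrt{d\log n})=o(d)$, both consequences of $d=\omega(\log n)$. On a high-probability event depending only on $\Xb$, this gives $\langle\Pi_*\Xb,\Pi\Xb\rangle_F\le(1+o(1))d\,\tr\Pi_*\Pi^\top+o(nd)$ for every pair of permutations. The bad events are absorbed by Cauchy--Schwarz.

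\emph{What yours buys.} It is more elementary and needs no differentiation in $b$. So you need neither the free-energy limit in a neighborhood of $b$ nor a derivative-convergence lemma for concave functions. Adding the matching bound $\min_j\norm{X_j}_2^2\ge(1-o(1))d$, the same estimates run in both directions and show $\YMMSE(b)/(nd)=\MMSE(b)/n+o(1)$ for every fixed $b$. Thus (ii) follows directly from (i).

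\emph{What the paper's buys.} It depends only on the full free-energy asymptotics, not on the overlap bound of Theorem~\ref{thm:GM_AoN}(i). Hence it does not use the restricted free-energy bound of Proposition~\ref{prop:GM_Psi_UB}. Its passage from free energy to estimation error is the standard information-theoretic one and uses no entrywise control of $\Xb\Xb^\top$.

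A minor notational point: your $\sigma=\Pi^{-1}\Pi_*$ collides with the noise level $\sigma$.
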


Thus, below $b=2$, we fail to recover the latent discrete matching, as well as match the points geometrically, in the sense that we can do no better than random guessing. Corollary \ref{cor:GM_infoTheoretic_nothing} is proved in Appendix \ref{sec:GM_infoTheoretic_nothing_proofs}.

Our results are based on precise asymptotics of the posterior normalizing constant $\cZ_{n,d}(\sigma)$ in \eqref{eq:GM_posterior_def}. In our case, this quantity can be interpreted as a type of random assignment \emph{partition function}. Its normalized logarithm is referred to as the \emph{free energy}, and is spelled out as
\begin{equation}\label{eq:GM_fullFE_def}
    \frac{1}{n \log n}\log \postGeomMPartition(\sigma)
    =
    \frac{1}{n \log n} \log \sum_{\Pi \in \mathscr{P}_n} \exp \inbraces{  \frac{1}{\sigma^2} \tr \Pi \Xb \Xb^\top\Pi_*^\top + \frac{1}{\sigma} \tr \Pi \Xb \Zb^\top  }.
\end{equation}

We highlight a key free energy asymptotic result.

\begin{proposition}\label{prop:GM_full_free_energy_subcritical}
    Suppose $b<2$ is fixed. Then
    \[
        \EE\insquare{\frac{1}{n \log n}\log \postGeomMPartition(\sigma)}
        =
        1+\frac b2+o(1).
    \]
\end{proposition}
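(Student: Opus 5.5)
We prove matching upper and lower bounds on $\EE\log\cZ_{n,d}(\sigma)$, both equal to $(1+\tfrac b2)n\log n$ up to $o(n\log n)$.

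\textbf{Upper bound (annealed computation).} By Jensen, $\EE\log\cZ_{n,d}\le\log\EE\cZ_{n,d}$. Write $\tr\Pi\Xb\Yb^\top=\sum_i\langle X_{\Pi^{-1}\text{-paired}},Y_i\rangle$. The cleanest route is to reduce by symmetry to $\Pi_*=I$ and group permutations $\Pi$ by their cycle structure relative to $\Pi_*$. Then
\[
\EE\exp\bigl\{\sigma^{-2}\tr\Pi\Xb\Yb^\top\bigr\}
\]
factorizes over the cycles of $\Pi\Pi_*^\top$, and each cycle's contribution is a Gaussian integral.

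For a fixed point the contribution is $\EE\exp\{\sigma^{-2}\langle X,X+\sigma Z\rangle\}$. Since $\sigma^{-2}=b\log n/d\to0$, its logarithm is $\tfrac d2\sigma^{-2}+\tfrac d2\sigma^{-2}+O(d\sigma^{-4})$. Here $d\sigma^{-2}=b\log n$, and we must check that the error term $d\sigma^{-4}=b^2\log^2 n/d=o(\log n)$, which holds because $d=\omega(\log n)$. This gives $n^{b+o(1)}$ per fixed point. For a non-fixed pair (cycles of length $\ge2$), each edge contributes $\exp\{\tfrac d2\sigma^{-2}(1+o(1))\}=n^{b/2+o(1)}$ per vertex, since the $\langle X_i,X_j\rangle$ term is independent and only the quadratic variance term survives. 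Summing over $k$ fixed points and $n!\approx n^{n}$ choices gives
\[
\log\EE\cZ_{n,d}\approx\max_{\alpha}\bigl[(1-\alpha)n\log n+\alpha bn\log n+(1-\alpha)\tfrac b2 n\log n\bigr].
\]
This is linear in $\alpha$ with slope $\tfrac b2-1<0$ when $b<2$, so the maximum is at $\alpha=0$, giving $(1+\tfrac b2)n\log n$. The cycle-length dependence and the determinants in the Gaussian integrals contribute only $e^{O(n)}$ and $e^{O(nd\sigma^{-4})}=e^{o(n\log n)}$ and so are absorbed.

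\textbf{Lower bound.} Here I expect the main obstacle: the annealed bound can fail to be tight because of atypical $(\Xb,\Zb)$. I would restrict to permutations $\Pi$ whose overlap with $\Pi_*$ is $o(n)$; derangements relative to $\Pi_*$ suffice. For such $\Pi$, the dominant randomness in the exponent is the sum of $\sigma^{-1}\langle X_{\cdot},Z_{\cdot}\rangle$ terms together with small-variance $\sigma^{-2}\langle X_i,X_j\rangle$ terms.

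Condition on $\Xb$ lying in a typical set where $\|X_i\|^2=d(1+o(1))$ for all $i$. The restricted partition function $\cZ'$ then has conditional mean $\exp\{(1+\tfrac b2+o(1))n\log n\}$ over $\Zb$. This mean comes essentially from the $\Zb$-Gaussian part, with the $\sigma^{-2}\langle X_i,X_{j}\rangle$ and $\sigma^{-2}\|X_i\|^2$ contributions controlled on the typical set.

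Next I would run a conditional second moment, computing $\EE_{\Zb}[\cZ'^2]/\EE_{\Zb}[\cZ']^2$. Pairs $(\Pi,\Pi')$ with overlap $m$ contribute a factor roughly $n^{mb/2}$ relative to the independent case, but there are only about $n^{-m}$ such pairs in proportion. Since $b/2<1$, the ratio is $\exp(o(n\log n))$ (indeed $e^{O(n)}$). The Paley--Zygmund inequality then gives $\cZ'\ge \EE_{\Zb}\cZ'\cdot e^{-o(n\log n)}$ with probability at least $e^{-o(n\log n)}$.

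To upgrade this to a bound on the expectation, use Gaussian concentration of $\log\cZ$ in $(\Xb,\Zb)$. The map is Lipschitz with constant $O(\sqrt n\,\sigma^{-1}\max\|X_i\|)=O(\sqrt{n\log n})$ on the typical set, so $\log\cZ$ fluctuates by $O(\sqrt{n\log n}\cdot\sqrt{n\log n})=O(n\log n)$. This is too crude, so I would instead use the truncated Lipschitz bound with a deviation level $t=\epsilon n\log n$, which has probability $\exp(-\epsilon^2 n\log n/C)$. That probability beats $e^{-o(n\log n)}$, forcing $\EE\log\cZ\ge(1+\tfrac b2-\epsilon)n\log n$ for every $\epsilon>0$. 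Combined with the upper bound, this yields the claim.
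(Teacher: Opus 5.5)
\textbf{Upper bound.} Both halves of your argument rest on a factor error. Once it is corrected, each half is valid only for $b\le 1$, and the range $1<b<2$ is exactly where the difficulty lies.

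The annealed weight of a fixed point is not $n^{b}$. Integrating out $Z$ gives $\exp\{\tfrac32\sigma^{-2}\|X\|^2\}$, so $\EE\exp\{\sigma^{-2}\langle X,X+\sigma Z\rangle\}=(1-3\sigma^{-2})^{-d/2}=n^{3b/2+o(1)}$. The term $\sigma^{-2}\|X\|^2$ alone already contributes $-\tfrac d2\log(1-2\sigma^{-2})\approx b\log n$, not $\tfrac12 b\log n$. Your optimization then has slope $b-1$ in $\alpha$, and $\frac{1}{n\log n}\log\EE\postGeomMPartition(\sigma)\to\max\{1+\frac b2,\frac{3b}{2}\}$. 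Indeed, the single term $\Pi=\Pi_*$ has expectation $(1-3\sigma^{-2})^{-nd/2}=e^{(3b/2+o(1))n\log n}$, which exceeds $e^{(1+b/2)n\log n}$ once $b>1$.

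So Jensen on the full partition function cannot give the upper bound for $1<b<2$: the annealed mean is carried by atypically large fixed-point noise $\sigma^{-1}\sum_{i\in S}X_i^\top Z_i$. The paper instead works as follows.
\begin{enumerate}[(a)]
\item It fixes the overlap level $k$ and splits by the fixed-point set $S$.
\item It bounds the fixed-point part in a quenched way, via $\EE\max_{|S|=k}F(S)\le(1+o(1))bk\log n+o(n\log n)$.
\item It applies Jensen only to the derangement sum (Proposition \ref{prop:GM_Psi_UB}).
\item It combines the levels $k$ through concentration of each restricted free energy and a union bound over $k$.
\end{enumerate}
The resulting exponent $1+\frac b2+\frac kn(\frac b2-1)$ is maximized at $k=0$.

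\textbf{Lower bound.} The same slip occurs in your second moment. For a pair $(\Pi,\Pi')$ sharing $m$ edges, the $\Zb$-moment generating function of the shared part is $\exp\{2\sigma^{-2}\sum_{\mathrm{shared}}\|X_i\|^2\}$. In $(\EE_{\Zb}\cZ')^2$ the corresponding factor is $\exp\{\sigma^{-2}\sum_{\mathrm{shared}}\|X_i\|^2\}$. So on typical $\Xb$ the excess is $n^{(1+o(1))bm}$, not $n^{bm/2}$.

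Weighed against the proportion $\approx 1/m!$, this gives $\EE_{\Zb}[\cZ'^2]/(\EE_{\Zb}\cZ')^2\approx\max_m n^{bm}/m!=e^{(b-1+o(1))n\log n}$ for $b>1$. The diagonal pairs $\Pi=\Pi'$ alone already produce this; it is precisely the blowup in \eqref{eq:proofOverview_secondMomentRatio}. Restricting to derangements of $\Pi_*$, or conditioning on a typical $\Xb$, does not help. The blowup is driven by rare $\Zb$ that make the shared sums of $X_i^\top Z_i$ atypically large. Paley--Zygmund then gives only probability $e^{-(b-1+o(1))n\log n}$, which the concentration bound $e^{-c\epsilon^2 n\log n/b}$ cannot beat for small $\epsilon$, so no sharp lower bound follows.

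The missing ingredient is a truncation. The paper proceeds in four steps.
\begin{enumerate}[(a)]
\item It passes to the pure-noise partition function $\cZ$, using that $\sigma^{-2}\tr\Pi\Xb\Xb^\top\ge-\eta n\log n$ uniformly in $\Pi$ with overwhelming probability.
\item It restricts to the event $\cE$, which caps $\max_{S\in\mathscr{P}_k}\frac{1}{\sqrt{dk}}\tr S\Xb\Zb^\top$ at $\sqrt{(2+\epsilon)(k\vee k_0)\log n}$ for every $k$.
\item It shows via Lemma \ref{lemma:GM_highProbEvent_conditional_corrected} that this truncation preserves the first moment, despite the exponential tilt toward large $\tr\Pi\Xb\Zb^\top$.
\item It bounds $\EE[\cZ^2\1_{\cE}]\le e^{(b+2+\zeta)n\log n}$ by a peeling argument on the shared part.
\end{enumerate}
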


We remark that the proof of Theorem \ref{thm:GM_AoN} also uses \emph{restricted} versions of this free energy, known as the \emph{Franz--Parisi potential}, where the sum in \eqref{prop:GM_full_free_energy_subcritical} is constrained to permutations $\Pi$ with a fixed overlap with the truth.

We finish this section by explaining the implication of our negative result Theorem \ref{thm:GM_AoN} (i) on related matching problems.

\begin{remark}\label{remark:WWXY_implication}
Wang, Wu, Xu, and Yolou \cite{wang2022random} study geometric graph matching models in which the latent point clouds satisfy \eqref{eq:GM_def}, but one observes only pairwise data generated through a transition kernel $W$:
\[
    A_{ij}\sim W(\cdot\mid X_i,X_j),
    \qquad
    B_{ij}\sim W(\cdot\mid Y_i,Y_j).
\]
Specific examples include the dot-product and distance kernel models. These observations are stochastically degraded relative to just observing $(\Xb,\Yb)$ as in \eqref{eq:GM_def}. Thus, to prove negative results, \cite[Appendix E, Proposition 1]{wang2022random} gives a necessary condition ruling out almost exact recovery for $b < 2$ in model \eqref{eq:GM_def} (referred to as ``linear assignment'' there).

However, this still leaves open the possibility of recovering a positive fraction of the matching. Our result closes this gap: Theorem \ref{thm:GM_AoN} and Corollary \ref{cor:GM_infoTheoretic_nothing} imply, by data processing, that when $d=\omega(\log n)$ and $\sigma^2=d/(b\log n)$ with fixed $b<2$, no estimator from such graph observations can have $c n$ correct matches for any fixed $c>0$. Thus, our results also reveal a new consequence for high-dimensional geometric graph matching models: there is no ``something'' phase below $b=2$.
\end{remark}

\subsection{Multi-view geometric planted matching}

In this section we consider a natural extension of the setting in \eqref{eq:GM_def}. We observe $K$ noisy, independently permuted views of the same latent point cloud:
\begin{align}
    \Yb^{(0)} &= \bar{\Pi}_*^{(0)}\Xb + \tau \Wb^{(0)} , \nonumber\\
    \Yb^{(1)} &= \bar{\Pi}_*^{(1)}\Xb + \tau \Wb^{(1)} , \nonumber\\
    &\,\,\,\vdots \nonumber\\
    \Yb^{(K-1)} &= \bar{\Pi}_*^{(K-1)}\Xb + \tau \Wb^{(K-1)} , \label{eq:KwayMatching_def}
\end{align}
where $\Xb$, $\Wb^{(0)},\dots,\Wb^{(K-1)}$ are independent random matrices in $\RR^{n \times d}$, each with i.i.d.~standard Gaussian entries, and where $\bar{\Pi}_*^{(0)},\dots,\bar{\Pi}_*^{(K-1)}$ are independent uniformly random permutation matrices, independent of these Gaussian matrices. The absolute latent matchings are not identifiable. The identifiable signals of interest are the relative matchings from view $0$ to the other views:
\begin{equation}
    \Pi_*^{(a)}:=\bar{\Pi}_*^{(0)}(\bar{\Pi}_*^{(a)})^\top,
    \qquad a=1,\dots,K-1.
\label{eq:def-relative-matching}
\end{equation}
Indeed, row $i$ of view $0$ and row $j$ of view $a$ correspond to the same latent row precisely when $\bar{\Pi}_*^{(0)}(i)=\bar{\Pi}_*^{(a)}(j)$, or equivalently $(\Pi_*^{(a)})_{ij}=1$.

We claim that with the mapping
\begin{equation}\label{eq:multi_twoView_Mapping}
\sigma^2 = 2\tau^2 +  \tau^4,
\end{equation}
i.e.~$\sigma^2 \sim \tau^4$ asymptotically,
the model \eqref{eq:KwayMatching_def} is equivalent to a $K$-view extension of the setting \eqref{eq:GM_def}, which would be a $2$-view model. We provide details in Remark \ref{remark:Kview_TwoView_Mapping}.

To recover the relative matchings $\Pi_*^{(a)}$ in \eqref{eq:def-relative-matching}, we
define the statistic
\begin{equation}\label{eq:KwayMatching_Ti_def}
    T_{\ib}
    =
    \sum_{0\leq a<b\leq K-1}
    Y_{i_a}^{(a)\top}Y_{i_b}^{(b)}
\end{equation}
for any candidate tuple
$
    \ib=(i_0,\dots,i_{K-1})\in[n]^K.
$
One should think of $T_{\ib}$ as the score when attempting to match indices $i_1,\dots,i_{K-1}$ from views $1,\dots,K-1$ respectively to $i_0$.

We next give a relatively simple algorithm based on the statistics $T_{\ib}$ for identifying the relative matchings $\Pi_*^{(a)}$, $a=1,\dots,K-1$.
Briefly, for each row $i$ in view $0$, the algorithm selects $j_1^\star(i),\dots,j_{K-1}^\star(i)$ for views $1,\dots,K-1$ as candidate matches, which is the tuple $(j_1,\dots,j_{K-1})$ that attains the largest score $T_{i,j_1,\dots,j_{K-1}}$ over all such tuples. A slight complication is that these greedy row-wise choices may not define a valid permutation altogether: for instance row $j$ of view 1 may end up as a candidate match for two rows $i$ and $i'$ of view 0. For each such $j$, the algorithm keeps one corresponding row $i$ arbitrarily. Finally, we complete the remaining unmatched rows and columns so that the output matrix for each view is indeed a valid permutation matrix in $\mathscr{P}_n$.

\begin{algorithm}[hbt!]
\DontPrintSemicolon
\caption{Row-wise thresholding and completion for fixed $K$}
\label{alg:Kway_general_rowwise_thresholding}
\For{$i \gets 1$ \KwTo $n$}{
    Choose
    $\displaystyle
    \inparen{j_1^\star(i),\dots,j_{K-1}^\star(i)}
    \in
    \argmax_{(j_1,\dots,j_{K-1})\in[n]^{K-1}}
    T_{i,j_1,\dots,j_{K-1}}$.
    Break any ties arbitrarily.\;
}
\For{$a \gets 1$ \KwTo $K-1$}{
    Initialize $\widehat{\Pi}^{(a)}$ as the $n\times n$ zero matrix.\;
    \ForEach{$j\in \inbraces{j_a^\star(i):i\in[n]}$}{
        Choose one $i\in[n]$ such that $j_a^\star(i)=j$, and set $\widehat{\Pi}^{(a)}_{ij}=1$.\;
    }
    Complete $\widehat{\Pi}^{(a)}$ arbitrarily to a permutation matrix in $\mathscr{P}_n$ by padding ones appropriately in the remaining zero rows and columns.\;
}
\Return $\widehat{\Pi}^{(1)},\dots,\widehat{\Pi}^{(K-1)}$.
\end{algorithm}
For fixed $K$, Algorithm \ref{alg:Kway_general_rowwise_thresholding} is a polynomial-time procedure: the exhaustive maximization considers $n^{K-1}$ tuples for each of the $n$ anchor rows, and each score is computed from $O_K(1)$ inner products in $\RR^d$. Thus the running time is $O_K(n^K d)$.

\begin{theorem}\label{thm:Kway_positive_AER}
Fix $K\geq 2$ and consider the $K$-view model \eqref{eq:KwayMatching_def} in the regime 

\[
    d = \omega(\log n),\qquad
    \tau^4=\frac{d}{b\log n},
    \qquad\text{and}\qquad
    b>\frac{K}{K-1}.
\]

Then Algorithm \ref{alg:Kway_general_rowwise_thresholding} achieves almost exact recovery for the relative matchings: for every $a=1,\dots,K-1$,
\begin{equation}\label{eq:KwayMatching_AER_statement}
    \frac{1}{n}\tr \widehat{\Pi}^{(a)}\inparen{\Pi_*^{(a)}}^\top \to 1
    \qquad\text{in probability.}
\end{equation}
\end{theorem}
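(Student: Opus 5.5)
We reduce almost exact recovery to a per-row statement. Call an anchor row $i$ of view $0$ \emph{good} if the true tuple $\ib^*(i)=(i,\Pi_*^{(1)}(i),\dots,\Pi_*^{(K-1)}(i))$ is the unique maximizer of $T_{i,\cdot}$. If $G$ is the number of bad rows, then in Algorithm \ref{alg:Kway_general_rowwise_thresholding} every good $i$ proposes its true partner $j$ in each view $a$. Distinct good rows propose distinct partners, since $\Pi_*^{(a)}$ is a bijection. Such a pair $(i,j)$ can be discarded only when some bad row also proposes $j$, and each bad row blocks at most one good row per view. Hence $\tr\widehat{\Pi}^{(a)}(\Pi_*^{(a)})^\top\ge n-2G$. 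It therefore suffices to show $\PP(i\text{ bad})\to0$ for a fixed $i$; Markov's inequality then gives $G/n\to0$ in probability. By exchangeability we relabel so that view $a$, row $i$ equals $X_1+\tau W^{(a)}_1$ for all $a$, i.e.\ the true tuple hits latent point $1$ in every view.

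The core step is a union bound organized by \emph{patterns}. A competing tuple $(i,j_1,\dots,j_{K-1})$ determines a partition of the views $\{0,\dots,K-1\}$ by which latent point each coordinate hits. Let $m$ be the number of distinct \emph{new} latent points other than $1$ used; for fixed pattern there are at most $n^m$ such tuples. Write $T_{\text{new}}-T_{\text{true}}=-\Delta+\xi$, where $\Delta=L d(1+o(1))$ and $L$ is the number of view pairs that were in the same class under the truth but not under the competitor; this is the loss of the $\|X_1\|^2$, $\|X_k\|^2$ terms. The fluctuation $\xi$ collects cross inner products.

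I will condition on $(X_1,W^{(\cdot)}_1)$. The terms that depend only on the true row, such as $\tau^2 W_1^{(a)\top}W_1^{(b)}$ of size $\tau^2\sqrt d=o(d)$, then become deterministic shifts. The terms $X^\top X'$ and $\tau X^\top W$ are of size $O(\sqrt{d\log n})$ or $O(\tau\sqrt{d\log n})$ uniformly over $n^{O(1)}$ choices, and both are $o(d)$ because $d=\omega(\log n)$. The dominant part is a sum of $\tau^2 W^\top W'$ products. Conditionally, it is a Gaussian chaos whose variance is computed by counting pairs: $V\approx\tau^4 d\,(\#\text{pairs touching a new latent, counted with multiplicity from summing noises of the same class})$. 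Hanson--Wright plus a union bound over $n^m$ tuples then gives error probability $\to0$ provided
\[
\Delta^2>2(1+\epsilon)\,m\log n\cdot V .
\]

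Plugging in $\tau^4=d/(b\log n)$ turns each pattern's condition into an explicit inequality in $b$. Sample cases are:
\begin{itemize}[leftmargin=*]
\item one view moved to a fresh point: $b(K-1)>2$;
\item all views moved to distinct fresh points: $b>4/K$;
\item all views $1,\dots,K-1$ moved together to a single fresh latent $k$: here $m=1$ and $L=K-1$, and $V\approx(K-1+\binom{K-1}{2})\tau^4 d$ because the within-group products $Y_k^{(a)\top}Y_k^{(b)}$ also fluctuate. This yields $(K-1)^2>\tfrac{K(K-1)}{b}$, i.e.\ exactly $b>K/(K-1)$.
\end{itemize}

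The main obstacle is the combinatorial optimization showing that this last pattern is the worst among all set partitions. It requires bounding $\max_{\text{patterns}} 2mV/(\tau^4 d\,L^2)$ by $(K-1)/K\cdot$(the threshold constant). I would first prove a monotonicity claim: merging two new classes into one lowers $m$ by one while changing $L$ and $V$ in a controlled way, which reduces the problem to patterns with a single new class. For a single new class of size $s$ the condition reads $b>(s+\binom{s}{2}\cdot\frac{2}{s})/(s(K-s))\cdot(\ldots)$, and this can be checked directly to be maximized at $s=K-1$.

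Two further technical points remain. The chaos tail must be sharp to leading order, which holds because in the regime $\tau^2\gg1$ the conditional distribution is essentially Gaussian with the stated variance. Ties must also be handled, which is a null event for the continuous distribution.
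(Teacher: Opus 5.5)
Your skeleton matches the paper's: the reduction $\tr\widehat\Pi^{(a)}(\Pi_*^{(a)})^\top\ge n-2G$, a union bound over multiplicity patterns with at most $n^m$ tuples each, and a sharp moderate-deviation estimate. Your one real departure is conditioning on the true row and bounding $T_{\mathrm{new}}-T_{\mathrm{true}}$. This is valid, and per pattern it is sharper than the paper. The paper compares each wrong tuple unconditionally to the level $\big(\binom{K}{2}-\epsilon\big)d$, using the full variance $\binom{K}{2}\tau^4d$. After your conditioning, the pairs inside the class of $\ell_0$ views that still point at the true latent cancel, so the relevant variance is $v\,\tau^4 d$ with $v=\binom{K}{2}-\binom{\ell_0}{2}$. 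That is why you get $b>2/(K-1)$ for the one-outlier pattern, where the paper's bound needs $b>K/(K-1)$, and why $(i,j,\dots,j)$ is the only critical pattern. A secondary point concerns the sharp tail. Hanson--Wright gives only unspecified constants, and ``essentially Gaussian'' does not control tails $\sqrt{\log n}$ standard deviations out. To get the constant $2$ you need the log-MGF expansion $\log\EE e^{\lambda(U-\EE U)}=\tfrac12\lambda^2\Var(U)+O(\tau^6\lambda^3)$ at $\lambda\asymp\tau^{-4}$, as in Lemma~\ref{lemma:T_i_concentrates_on_mean}. You would apply it to your conditionally independent, non-identically distributed coordinate blocks.

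The genuine gap is the pattern optimization, which you flag as the main obstacle, and the route you sketch for it fails. Merging two fresh classes of sizes $p,q$ leaves $v$ unchanged, lowers $m$ by one, and lowers $L$ by $pq$. The critical ratio $2mv/L^2$ therefore becomes $2(m-1)v/(L-pq)^2$, which can decrease. For example, with $K=4$ and $\ell_0=1$, fresh classes of sizes $(1,1,1)$ give $2\cdot3\cdot6/36=1$. Merging two of them gives sizes $(2,1)$ and $2\cdot2\cdot6/25=24/25$. So you cannot reduce to single-class patterns by monotone merging. Your single-class formula is also garbled: the correct value is $2\big(s(K-s)+\binom{s}{2}\big)/\big(s(K-s)\big)^2$, which is maximized at $s=K-1$.

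The missing idea is to keep $m$ fixed and shift multiplicity rather than merge, as the paper does. Moving one unit from a class of size $\ell_s>1$ to a class of size $\ell_t\ge\ell_s$ raises $\sum_s\binom{\ell_s}{2}$ by $\ell_t-\ell_s+1\ge1$. Hence $L\ge\binom{K}{2}-\binom{K-m}{2}=m(2K-m-1)/2$. With $v\le\binom{K}{2}$ this gives $2mv/L^2\le 4K(K-1)/\big(m(2K-m-1)^2\big)$. This is at most $K/(K-1)$, because $m\mapsto m(2K-m-1)^2$ increases then decreases on $[1,K-1]$. Its endpoint values are $4(K-1)^2$ and $K^2(K-1)\ge4(K-1)^2$. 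With this inequality in place, your conditional union bound goes through for $b>K/(K-1)$.
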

By \eqref{eq:multi_twoView_Mapping}, the scaling $\tau^4 = d/(b \log n)$ is asymptotically equivalent to the two-view scaling in \eqref{eq:GM_def}. Thus Theorem \ref{thm:Kway_positive_AER} should be read as the multi-view analogue of the positive side of the two-view threshold. It shows that additional views improve the sufficient condition for recovery from $b>2$ when $K=2$ to $b>K/(K-1)$ for general fixed $K$.

\begin{remark}
    Specializing to $K=3$, we see that for $b$ between $3/2$ and $2$, we get nothing for the $2$-view matching, i.e. the relative matching $\Pi_*^{(1)}$ cannot be recovered up to a linear number of errors (c.f.~Corollary \ref{cor:GM_infoTheoretic_nothing}). But with a third view, we can recover both relative matchings $\Pi_*^{(1)}$ and $\Pi_*^{(2)}$.

    An intuitive explanation is that, for \(K=3\), the statistic \(T_{\ib}\) does more than score two candidate matches against view \(0\): it also rewards consistency between the proposed rows in views \(1\) and \(2\). Theorem \ref{thm:Kway_positive_AER} quantifies the resulting gain in the recovery threshold.
\end{remark}

Taken together, Corollary \ref{cor:GM_infoTheoretic_nothing} and \ref{thm:Kway_positive_AER} imply an ``all-or-nothing'' phase transition at $b = 2$ for the two-view model \eqref{eq:intro_two_view}.
Determining whether $b=K/(K-1)$ is the sharp information-theoretic threshold for $K$-view recovery in general is an interesting direction for future work. A challenge is that a matching lower bound may require controlling the multi-view posterior over a higher-dimensional overlap structure.

\subsection{Gaussian weighted matching}

We now record a ``$d=\infty$'' version of the two-view matching problem, in which the geometry does not play any role. Let $K_{n,n}$ be the complete bipartite graph with both partitions identified with $[n]$. Perfect matchings in $K_{n,n}$ may be identified with $n \times n$ permutation matrices in $\mathscr{P}_n$.

In \emph{Gaussian weighted matching}, we first draw a perfect matching $\Pi_* \in \mathscr{P}_n$ on $K_{n,n}$ uniformly at random. We observe a weighted adjacency matrix $A=(A_{ij})_{i,j\in[n]}$ on $K_{n,n}$ with weights given as follows. For a parameter $\beta > 0$, conditionally on $\Pi_*$, the edge weights of $A$ are independent: edges in $\Pi_*$ have law $\cN(1, 1/\beta)$, while all other edges have law $\cN(0, 1/\beta)$. Equivalently, for $Z=(Z_{ij})_{i,j\in[n]}$ with i.i.d.\ entries $Z_{ij} \sim \cN(0,1)$ and independent of $\Pi_*$, we observe
\begin{equation}\label{eq:gaussianWeightedMatching_def}
    A = \Pi_* + \frac{1}{\sqrt{\beta}} Z .
\end{equation}
Such planted matching models were studied in \cite{ding2023planted}, with particular focus on exponential edge weights. Remark~1 in \cite{ding2023planted} also discusses the Gaussian weighted version. In this paper, we include \eqref{eq:gaussianWeightedMatching_def} as a non-geometric companion to \eqref{eq:GM_def}. Both models share the same matching structure, but conditionally on $\Pi_*$, the edge weights in \eqref{eq:gaussianWeightedMatching_def} are independent Gaussians, whereas in \eqref{eq:GM_def} the edge weights remain dependent based on an underlying geometric structure.

The posterior distribution is a distribution over matchings in $\mathscr{P}_n$ given by 
\begin{align}
    P(\Pi \mid A) = \frac{\exp \cH(\Pi)}{\cZ_{\text{GWM}}(\beta)}, \qquad \cZ_{\text{GWM}}(\beta) = \sum_{\Pi \in \mathscr{P}_n} \exp(\cH(\Pi)),
\end{align}
where 
$
    \cH(\Pi) = \beta \tr \Pi_* \Pi^\top + \sqrt{\beta}\tr Z \Pi^\top.
$
Throughout this subsection, $\inangle{\cdot}$ denotes the expectation with respect to this posterior. Our main result for this model is analogous to Theorem \ref{thm:GM_AoN}.

\begin{theorem}\label{prop:plantedGaussianAoN}
Consider the Gaussian weighted matching model \eqref{eq:gaussianWeightedMatching_def} and suppose $\beta = b \log n$ for $b > 0$. Let $\Pi$ denote a sample from the posterior distribution. Then the following hold.
\begin{enumerate}[(i)]
    \item Suppose $b < 2$ is fixed. For any fixed $\delta \in  (0,1]$, there exists a constant $C_{b,\delta} > 0$ such that
    \begin{align}\label{eq:plantedGaussianAoN} 
        \EE \inangle{\1\!\inbraces{\tr \Pi_* \Pi^\top \geq \delta n }}
        \leq \exp\inparen{- C_{b,\delta}n \log n}.
    \end{align}
    \item Suppose $b>2$ is fixed. Then for any fixed $\delta\in(0,1)$,
    \[
        \EE \inangle{\1\!\inbraces{\tr \Pi_* \Pi^\top \leq (1-\delta)n}}
        \to 0.
    \]
\end{enumerate}
\end{theorem}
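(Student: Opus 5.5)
Both parts concern the Gaussian weighted matching model with $\beta=b\log n$. I would prove (i) by a Franz--Parisi / conditional first-moment bound on the posterior mass of high-overlap matchings, normalized by a lower bound on the partition function. I would prove (ii) by a direct row-wise argument plus a union bound.

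\textbf{Part (i).} First I lower bound $\log\cZ_{\text{GWM}}(\beta)$. Dropping the planted term, which is nonnegative, gives
\[
\cZ_{\text{GWM}}\ge \sum_\Pi \exp(\sqrt\beta \tr Z\Pi^\top)=:\cZ_0 .
\]
This is the partition function of a random assignment with i.i.d.\ Gaussian weights. Its annealed value is $\EE\cZ_0=n!\,e^{n\beta/2}$, so $\frac{1}{n\log n}\log\EE\cZ_0\to 1+b/2$. For $b<2$ I expect $\cZ_0$ to concentrate at its annealed value on the $n\log n$ scale. I would show this by a second-moment computation,
\[
\EE\cZ_0^2=\sum_{\Pi,\Pi'} e^{n\beta+\beta\,\tr\Pi\Pi'^\top},
\]
where the number of pairs with overlap $k$ is about $n!\,(n!/k!)$. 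The exponent in $k=\alpha n$ is then $n\log n\,(2+b-\alpha+b\alpha/2)$ compared with $2(1+b/2)$, which is maximized at $\alpha=0$ exactly when $b<2$. Hence $\EE\cZ_0^2\le e^{o(n\log n)}(\EE\cZ_0)^2$. Paley--Zygmund gives a constant-probability lower bound. Gaussian concentration of $\log\cZ_0$, which is $\sqrt{\beta n}$-Lipschitz in $Z$, upgrades this to $\log \cZ_0\ge (1+b/2-\epsilon)n\log n$ with probability $1-e^{-cn\log n}$. (Proposition~\ref{prop:GM_full_free_energy_subcritical} is the geometric analogue of this step.)

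Next I bound the numerator. Let $N_\delta=\sum_{\Pi:\tr\Pi_*\Pi^\top\ge\delta n} e^{\cH(\Pi)}$. Its expectation is
\[
\EE N_\delta=\sum_{k\ge\delta n}\#\{\Pi:\tr\Pi_*\Pi^\top=k\}\,e^{\beta k+n\beta/2}\le \sum_{k\ge \delta n}\frac{n!}{k!}\,n^{bk}\,e^{n\beta/2}.
\]
On the $n\log n$ scale its exponent is $\max_{\alpha\ge\delta}\bigl[(1-\alpha)+b\alpha+b/2\bigr]$. This is smaller than $1+b/2$ only if $b<1$, so a plain first moment is not enough when $1\le b<2$.

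This is the main obstacle, and I would handle it with the planted trick. Since $\Pi_*$ is uniform, the joint law of $(\Pi_*,A)$ is the same as that of $(\Pi,A)$ with $\Pi$ a posterior sample. Therefore
\[
\EE\inangle{\1\{\tr\Pi_*\Pi^\top\ge\delta n\}}=\EE\,\frac{N_\delta}{\cZ_{\text{GWM}}},
\]
and by a change of measure to the null model $A=Z/\sqrt\beta$ this equals
\[
\EE_{\text{null}}\!\left[\frac{1}{n!}\sum_{\Pi_*}\frac{e^{L(\Pi_*)}N_\delta(\Pi_*)}{\cZ\,e^{?}}\right].
\]
In the null model the weighted sum becomes a pairwise count,
\[
\frac{1}{n!}\sum_{\Pi',\Pi:\ \text{overlap}\ge\delta n} e^{\sqrt\beta\tr A\Pi'^\top+\sqrt\beta\tr A\Pi^\top-n\beta}\Big/\cZ_0(A).
\]
On the good event $\cZ_0\ge e^{(1+b/2-\epsilon)n\log n}$ I take expectations of the numerator. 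This gives exactly the restricted second moment above, with exponent $\max_{\alpha\ge\delta}(2+b-\alpha+b\alpha/2)$ minus $1$ for the $1/n!$, minus the $n\beta$ term. The comparison is with the $\cZ_0$ lower bound, and the gap is $\alpha(1-b/2)-\epsilon\ge \delta(1-b/2)/2>0$. The bad event contributes at most its probability $e^{-cn\log n}$, because the ratio is at most $1$. This yields $C_{b,\delta}=\delta(1-b/2)/2$ (say). The delicate point is carrying out the change of measure cleanly, and making sure the ratio bound on the bad event is applied under the planted rather than the null law. I would resolve the latter by noting that $N_\delta/\cZ\le1$ pointwise, and by bounding the probability of the bad event under the planted law using $\cZ\ge\cZ_0$ evaluated at the actual noise $Z$, whose law is the same.

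\textbf{Part (ii).} For $b>2$ I take the row-wise argmax $\hat\Pi(i)=\argmax_j A_{ij}$. The true entry is $1+O_P(1/\sqrt\beta)$. The maximum of $n$ null entries is $\sqrt{2\log n/\beta}\,(1+o(1))=\sqrt{2/b}+o(1)<1$. So all but $o(n)$ rows are correct in expectation, and the completion step changes only $o(n)$ entries. Recovery by some estimator then transfers to the posterior sample through a standard argument: if $\EE\langle \tr\Pi_*\Pi^\top\rangle$ were at most $(1-\eta)n$, then by Nishimori symmetry two independent posterior samples would have small overlap. But each has overlap $1-o(1)$ with $\hat\Pi$ with high probability, because $\EE\langle\tr\hat\Pi\Pi^\top\rangle=\EE\tr\hat\Pi\Pi_*^\top$. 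This completes (ii).
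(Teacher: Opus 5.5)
Part (i) has a genuine gap. Both of its steps rest on the same arithmetic slip. In $\EE\cZ_0^2=\sum_{\Pi,\Pi'}e^{n\beta+\beta\tr\Pi\Pi'^\top}$, each of the $k=\alpha n$ shared edges carries an extra factor $e^{\beta}=n^{b}$. On the $n\log n$ scale the exponent is therefore $2+b-\alpha+b\alpha$, not $2+b-\alpha+b\alpha/2$. In fact
\[
\frac{\EE\cZ_0^2}{(\EE\cZ_0)^2}=\frac{1}{n!}\sum_{\Pi}e^{\beta\tr\Pi}\geq\frac{n^{bn}}{n!}=e^{(b-1+o(1))n\log n},
\]
so the untruncated second moment fails for every $b\in(1,2)$. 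Paley--Zygmund then gives only probability $e^{-(b-1+o(1))n\log n}$. This is comparable to the concentration tail $e^{-ct^2n\log n/b}$, so you lose a constant times $n\log n$ in the lower bound on $\EE\log\cZ_0$.

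The numerator step fails for the same reason. Your change of measure is an identity: $\EE_{\mathrm{null}}\bigl[\frac{1}{n!}\sum_{\Pi_*}L(\Pi_*)N_\delta(\Pi_*)\bigr]=\EE N_\delta$ under the planted law. Once you drop the good-event indicator and take expectations, you recover exactly the plain first moment you had already computed, with exponent $1+b/2+\alpha(b-1)$. (The likelihood-ratio normalization is also $e^{-n\beta/2}$, not $e^{-n\beta}$.) The claimed gap $\alpha(1-b/2)$ is really $-\alpha(b-1)$. As written, your argument proves (i) only for $b<1$.

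The regime $1\leq b<2$ needs the two ingredients the paper uses for the geometric model, which it says transfer directly.

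\textbf{Numerator.} Do not anneal the noise on the shared edges. With $\Pi_*=I$, bound the fixed-point part quenched: $\max_{|S|=k}\sqrt\beta\sum_{i\in S}Z_{ii}\leq\sqrt\beta\sum_i|Z_{ii}|=o(n\log n)$. Apply Jensen only to the derangement part, which gives $(1-\alpha)(1+b/2)$. Then use the $\sqrt{\beta n}$-Lipschitz concentration of the restricted log-partition function. This yields the Franz--Parisi bound $1+b/2-\alpha(1-b/2)$, which is what actually produces your exponent.

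\textbf{Denominator.} You need a truncated second moment.
\begin{enumerate}
\item Restrict to an event $\cE$ that caps $k^{-1/2}\tr SZ$ at $\sqrt{(2+\epsilon)(k\vee c_0n)\log n}$ uniformly over $k$-subpermutations $S$.
\item Show that truncation preserves the first moment $e^{(1+b/2+o(1))n\log n}$. This requires $\cE$ to remain likely conditionally on a typical tilted value of $\tr\Pi Z$.
\item A shared block of size $k>c_0n$ then contributes at most about $e^{\lambda t-t^2/2}$, with $\lambda=2\sqrt{bk\log n}$ and $t=\sqrt{(2+\epsilon)k\log n}$.
\item This turns the overlap sum into $\sum_k e^{\gamma k\log n-k\log k}$ with $\gamma=2\sqrt{b(2+\epsilon)}-(2+\epsilon)/2-b$. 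For small $\epsilon$ this is below $1$ precisely because $b<2$.
\end{enumerate}

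Your part (ii) is fine and is a valid alternative to the paper's route. You use the row-wise argmax instead of citing Ding et al., and transitivity of agreement via Nishimori instead of posterior-mean optimality plus Markov.
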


\begin{remark}
    We omit the proof of Theorem~\ref{prop:plantedGaussianAoN} because it follows from the same ideas as the geometric result Theorem \ref{thm:GM_AoN}. In the geometric model \eqref{eq:GM_def}, the noise term in the posterior involves the matrix $\Xb\Zb^\top/\sqrt{d}$; in the ``$d=\infty$'' model \eqref{eq:gaussianWeightedMatching_def} this is replaced by an $n\times n$ matrix $Z$ of i.i.d.~standard Gaussians. Thus the proof of the negative result (i) is analogous to that for the geometric setting, and is in fact simpler.

    For part (ii), after some notation translation, \cite[Remark 1]{ding2023planted} gives $b > 2$ as the threshold for almost exact recovery. The subsequent passage to the posterior overlap statement is identical to that for Theorem \ref{thm:GM_AoN}(ii). Hence we omit the details of the proofs.
\end{remark}

Let $\inangle{\Pi} = \EE[\Pi_* \mid A]$ be the posterior mean. Define
\begin{align*}
    \MMSE_{\mathrm{GWM}}(b) \coloneqq \inf_{\widehat{\Pi}(A)} \EE \norm{\widehat{\Pi}(A) - \Pi_*}_F^2
    = \EE \norm{\inangle{\Pi} - \Pi_*}_F^2,
\end{align*}
where the infimum ranges over all possible matrix-valued estimators. The dummy estimator $\EE \Pi_*=\frac{1}{n}\boldsymbol{1}\boldsymbol{1}^{\top}$ has risk $n-1$, and the next result shows that this is asymptotically optimal below the critical threshold. The proof is analogous to that of Corollary~\ref{cor:GM_infoTheoretic_nothing} and is omitted.

\begin{corollary}\label{thm:plantedGaussianNoRecovery}
    Consider the Gaussian weighted matching model \eqref{eq:gaussianWeightedMatching_def} and suppose $\beta = b \log n$ for some fixed $b < 2$. Then
    $
        \frac{\MMSE_{\mathrm{GWM}}(b)}{n}\to 1.
    $
\end{corollary}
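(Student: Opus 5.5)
We deduce Corollary~\ref{thm:plantedGaussianNoRecovery} from Theorem~\ref{prop:plantedGaussianAoN}(i), as in the geometric case. The lower bound $\MMSE_{\mathrm{GWM}}(b)\le n-1$ is immediate from the dummy estimator, so it suffices to show $\MMSE_{\mathrm{GWM}}(b)\ge n-o(n)$.

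First we expand the risk of the posterior mean. Since $\|\Pi_*\|_F^2=n$ and $\EE\langle \tr \Pi\Pi_*^\top\rangle=\EE\|\langle\Pi\rangle\|_F^2$ (both are $\EE\tr(\langle\Pi\rangle\Pi_*^\top)$ by the tower property), we get
\[
\MMSE_{\mathrm{GWM}}(b)=n-\EE\|\langle\Pi\rangle\|_F^2=n-\EE\langle \tr \Pi_*\Pi^\top\rangle .
\]
The problem therefore reduces to showing $\EE\langle \tr\Pi_*\Pi^\top\rangle=o(n)$. This is a posterior overlap statement: the expected overlap between the truth and a posterior sample is $o(n)$.

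Fix $\delta\in(0,1]$. We split according to whether the overlap is at least $\delta n$, using $\tr\Pi_*\Pi^\top\le n$:
\[
\EE\langle \tr\Pi_*\Pi^\top\rangle\le \delta n + n\,\EE\langle \1\{\tr\Pi_*\Pi^\top\ge\delta n\}\rangle\le \delta n+n\exp(-C_{b,\delta}n\log n),
\]
where the second step is Theorem~\ref{prop:plantedGaussianAoN}(i). Hence $\limsup_n \EE\langle\tr\Pi_*\Pi^\top\rangle/n\le\delta$. Since $\delta$ is arbitrary, the expected overlap is $o(n)$, and so $\MMSE_{\mathrm{GWM}}(b)/n\to1$.

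The argument has no real obstacle beyond the exponential posterior bound of Theorem~\ref{prop:plantedGaussianAoN}(i), which we are assuming; the only care needed is the Nishimori-type identity in the first step.
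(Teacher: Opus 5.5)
Your proof is correct and takes essentially the paper's route, which it gets by analogy with the proof of Corollary~\ref{cor:GM_infoTheoretic_nothing}(i): the Nishimori-type identity $\MMSE_{\mathrm{GWM}}(b)=n-\EE\langle\tr\Pi_*\Pi^\top\rangle$, a split of the overlap at $\delta n$ controlled by Theorem~\ref{prop:plantedGaussianAoN}(i), and the dummy estimator for the matching bound. One cosmetic slip: $\MMSE_{\mathrm{GWM}}(b)\le n-1$ is the upper bound on the risk, not the lower bound.
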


\begin{remark}
Recent independent work \cite{hou2026recovery} studied recovery of planted sparse structures in graphs in great generality. In particular, \cite[Theorem 6]{hou2026recovery} can be specialized to the setting of \eqref{eq:gaussianWeightedMatching_def} to recover the same $b=2$ all-or-nothing threshold for \eqref{eq:gaussianWeightedMatching_def}, and implies Corollary \ref{thm:plantedGaussianNoRecovery}.

Our Theorem \ref{prop:plantedGaussianAoN}(i) provides a stronger statement by showing that the posterior mass assigned to matchings with positive overlap with the truth decays to zero at a quantitative exponential rate. The area I-MMSE approach used in \cite{hou2026recovery} differs from our conditional second moment method, which tackles the posterior directly.

At a high level, the framework in \cite{hou2026recovery} assumes independent edge weights conditional on the planted structure, and so does not encompass the geometric setting \eqref{eq:GM_def} where the edge weights remain dependent even conditional on the planted permutation.
\end{remark}

\section{Proof overview of Theorems \ref{thm:GM_AoN} and \ref{thm:Kway_positive_AER}}

\paragraph{Proof overview of Theorem \ref{thm:GM_AoN}.}

We focus on the negative result (i), which is the main content. By an equivariance argument, we reduce to the case $\Pi_*=I$. 
We consider a fixed overlap level $\alpha n \in \ZZ$ with $\alpha\in(0,1]$ and show through a concentration argument that the expected posterior mass at this overlap is approximately
\begin{equation}\label{eq:proofOverview_overlapMass}
    \EE\inangle{\1\!\inbraces{\tr \Pi=\alpha n}}
    \approx
    \exp\!\left\{
        n\log n\,
        \big(\EE\Psi_\sigma(\alpha)-\EE\widetilde \Psi_\sigma+o(1)\big)
    \right\},
\end{equation}
where
$
    \Psi_{\sigma}(\alpha)
    =
    (n \log n)^{-1}
    \log
    \sum_{\Pi:\tr \Pi=\alpha n}
    \exp\!\left\{
        \frac{1}{\sigma^2}\tr \Pi\Xb\Xb^\top
        +
        \frac{1}{\sigma}\tr \Pi\Xb\Zb^\top
    \right\},
$
and $\widetilde{\Psi}_{\sigma}$ is defined in the same way, but without the constraint on $\Pi$. That is, $\widetilde{\Psi}_{\sigma}$ is the free energy in \eqref{eq:GM_fullFE_def} after setting $\Pi_* = I$. The focus is on showing that $\EE\Psi_\sigma(\alpha) < \EE\widetilde \Psi_\sigma$ strictly.

\medskip
\noindent
\underline{Upper bound on $\EE\Psi_\sigma(\alpha)$.}
For any permutation $\Pi$, denote its \emph{fixed points} by $S(\Pi) = \{i:\Pi_{ii}=1\}$, and its \emph{derangement} part by $S(\Pi)^c = [n] \backslash S(\Pi)$. Set $k=\alpha n$. For $\tr\Pi=k$ and $S=S(\Pi)$, define 
\[
    F(S)
    =
    \frac{1}{\sigma^2}\norm{\Xb_S}_{F}^{2}
    +
    \frac{1}{\sigma}\tr \Xb_S\Zb_S^\top,
    \qquad
    R(S,\Pi)
    =
    \frac{1}{\sigma^2}\tr \Xb_{\Pi(S^c)}\Xb_{S^c}^\top
    +
    \frac{1}{\sigma}\tr \Xb_{\Pi(S^c)}\Zb_{S^c}^\top ,
\]
where the notation $\Xb_S \in \RR^{k \times d}$ extracts the rows of $\Xb$ indexed by $S$, preserving their natural ordering, and where $\Xb_{\Pi(S^c)}=\Pi\big|_{S^c}\Xb_{S^c}$, with $\Pi\big|_{S^c}$ being the $(n-k) \times (n-k)$ permutation matrix corresponding to the restriction of $\Pi$ to $S^c$.

The proof decomposes $\EE\Psi_\sigma(\alpha)$ into the fixed point part and the derangement part:
\begin{align*}
    \EE\Psi_\sigma(\alpha)
    &=
    \frac{1}{n\log n}
    \EE\log
    \sum_{\Pi:\tr \Pi=k}
    \exp\!\left\{F(S(\Pi))+R(S(\Pi),\Pi)\right\} \\
    &\leq
    \frac{1}{n\log n}
    \EE\log
    \max_{S:\abs{S}=k}
    \exp\!\left\{F(S)\right\}
    +
    \frac{1}{n\log n}
    \EE\log
    \sum_{S:\abs{S}=k}\,
    \sum_{\Pi:S(\Pi)=S}
    \exp\!\left\{R(S,\Pi)\right\}.
\end{align*}
In the first term, for each $S$, the norm $\norm{\Xb_S}_F^2$ is chi-squared, while the $\tr \Xb_S\Zb_S^\top$ term is conditionally Gaussian given $\Xb$. Standard chi-squared and Gaussian maximum bounds then yield that the first term's contribution is at most $b\alpha+o(1)$. In the second term, we apply Jensen's inequality to push the expectation over the $\log$, which reduces the problem to controlling a Gaussian quadratic moment generating function: for an admissible $(S,\Pi)$ pair, we compute $\EE e^{R(S,\Pi)} = \EE_{\Xb} \EE_{\Zb}e^{R(S,\Pi)}=\EE_{\Xb}\exp\{\xb^\top H \xb\}$, where $\xb=\text{vec}(\Xb_{S^c})$ and $H = \frac{1}{2\sigma^2}(I_{n-k}+2\Pi\big|_{S^c})\otimes I_d$. This quantity depends only on the cycle structure of the derangement $\Pi\big|_{S^c}$, and we carry out a spectral analysis to show that it is maximized when $\Pi\big|_{S^c}$ consists of as many $2$-cycles as possible. This leads to a bound on the second term's contribution of $(1-\alpha)(1+b/2)+o(1)$. Altogether, 
$
    \EE\Psi_\sigma(\alpha)
    \leq
    1+b/2+\alpha(b-2)/2+o(1).
$

\medskip
\noindent
\underline{Lower bound on $\EE\widetilde\Psi_\sigma$.}
We aim to show $\EE\widetilde\Psi_\sigma \geq 1 + \frac{b}{2} + o(1)$. We start with a reduction argument to a \emph{noise} partition function $\cZ$, such that
\[
    \EE\widetilde\Psi_\sigma
    \approx
    \frac{1}{n\log n}\EE\log\cZ,
    \qquad\text{where}\qquad
    \cZ=
    \sum_{\Pi\in\mathscr P_n}
    \exp\!\left\{
        \sqrt{b\log n}\,
        \tr \Pi\frac{\Xb\Zb^\top}{\sqrt d}
    \right\}.
\]
We will adopt a conditional second moment method, and show that
\[ \frac{1}{n\log n}\log \EE[\cZ^2 \1_{\cE}]
\approx \frac{2}{n\log n} \log \EE [\cZ \1_{\cE}]\]
for an appropriately chosen high probability event $\cE$:
For any small $\epsilon,c_0>0$ and $k_0=c_0n$, set
\[
    \cE
    =
    \left\{
    \max_{S\in\mathscr{P}_k}
    \frac{1}{\sqrt{dk}}\tr S\Xb\Zb^\top
    \leq
    \sqrt{(2+\epsilon)(k\vee k_0)\log n}
    \quad\text{for all }1\leq k\leq n \text{ s.t.}\ \mathscr{P}_k \ne \varnothing
    \right\},
\]
where $\mathscr{P}_k$ denotes the set 
of $k$-subpermutation matrices, i.e.~$n\times n$ zero-one matrices 
with exactly $k$ rows and $k$ columns containing a single one.

Certainly, the event $\cE$ is not immediately intuitive, so let us provide motivation. Consider the untruncated first moment $\EE\cZ = \exp\inbraces{\inparen{1+\frac b2+o(1)}n\log n}$ which
matches the target exponent.
However, the second moment blows up when $b>1$. To see this, write
\begin{align}
    \EE\cZ^2
    &=
    \sum_{\Pi,\Pi'\in\mathscr{P}_n}
    \EE\exp\inbraces{
        \sqrt{b\log n}\,
        \tr(\Pi+\Pi')\frac{\Xb\Zb^\top}{\sqrt d}
    } \label{eq:proofOverview_noise_second_moment} \\
    &=
    n! \sum_{k=0}^n \sum_{\Pi:\tr \Pi=k}
    \EE\exp\inbraces{
        \sqrt{b\log n}\,
        \tr(I+\Pi)\frac{\Xb\Zb^\top}{\sqrt d}
    }. \label{eq:proofOverview_noise_second_moment_symmetrized}
\end{align}
We will argue that the main culprits are pairs $(\Pi,\Pi')$ in \eqref{eq:proofOverview_noise_second_moment} that share a submatching. In  \eqref{eq:proofOverview_noise_second_moment_symmetrized}, this corresponds to permutations $\Pi$ with say $k$ fixed points. These carry a doubled exponential
weight on the $k$ aligned coordinates 
$\inbraces{X_i^\top Z_i \colon \Pi_{ii}=1}$, yielding an excess factor $e^{(1+o(1))bk\log n}$
over the corresponding term in $(\EE\cZ)^2$. However, such permutations make
up a proportion of only $\approx 1/k! \approx \exp\{-k\log k\}$ of all
permutations. Consequently,
\begin{equation}\label{eq:proofOverview_secondMomentRatio}
    \frac{\EE\cZ^2}{(\EE\cZ)^2}
    \approx
    \exp\inbraces{\max_{0\leq k\leq n}\inparen{bk\log n - k\log k} + o(n\log n)},
\end{equation}
which is $e^{o(n\log n)}$ if $b\leq1$ (attained at $k=n^b$),
but $e^{(b-1+o(1))n\log n}$ if $b>1$ (attained at $k=n$).
Thus the vanilla second moment method fails for $b > 1$,
even though $\EE \log \cZ \approx \log \EE \cZ$ remains the correct scale for $b < 2$.

A pair $(\Pi,\Pi')$ in a summand of \eqref{eq:proofOverview_noise_second_moment} that share a $k$-subpermutation matrix $S$ contributes $2\sqrt{bk \log n} V_S$ to the exponent, where $V_S = \frac{1}{\sqrt{dk}} \tr S \Xb \Zb^\top$. On the other hand, one can show that the maximum of $V_S$ over $\mathscr{P}_k$ is typically $\sqrt{2k \log n}$. Thus, such summands in \eqref{eq:proofOverview_noise_second_moment} are carried by rare events. After accounting for the entropy discount $1/k!$ as in \eqref{eq:proofOverview_secondMomentRatio}, one finds that for $b > 1$, the ratio blowup is driven by these rare, unusually large $V_S$ contributions. The event $\cE$ is designed to cap the quantities $V_S$ at their typical scale, uniformly over all $k$ and $S \in \mathscr{P}_k$.

We show that $\cE$ is a high probability event. Furthermore, writing $W=\Xb\Zb^\top/\sqrt d$, we show that for any fixed permutation $\Pi$ (c.f.~Lemma
\ref{lemma:GM_highProbEvent_conditional_corrected}):
\[
   \PP\insquare{\cE^c \,\bigg|\, \tr \Pi W}
    \1\left\{\abs{\frac{1}{\sqrt n}\tr \Pi W}\leq \sqrt{2n\log n}\right\}
    = o(1).
\]
This conditional statement arises from the need to show that truncation preserves the first moment: $\EE[\cZ\1_{\cE}] \approx \EE \cZ$. In $\EE[\cZ\1_{\cE}]$, a generic summand is $\EE[\exp\{\sqrt{b\log n}\tr \Pi W\}\1_{\cE}]$, which tilts towards large values of $\tr \Pi W$, so a priori, the factor $\1_{\cE}$ could remove the contributions that carry the first moment. The conditional estimate above rules this out.

Altogether, we show that the truncated moments satisfy 
\[
    \frac{1}{n\log n}\log \EE[\cZ\1_\cE]
    =
    1+\frac b2+o(1),
    \qquad
    \frac{1}{n\log n}\log \EE[\cZ^2\1_\cE]
    \leq
    b+2+o(1).
\]
A routine application of the Paley--Zygmund inequality then leads to $\EE\widetilde\Psi_\sigma\geq 1+b/2+o(1)$.

Combining these bounds in \eqref{eq:proofOverview_overlapMass} gives the exponent $\alpha(b-2)/2+o(1)$, which is strictly negative for $b<2$. Summing over the at most $n$ possible overlap levels $\alpha=k/n\geq\delta$ yields the result.

\paragraph{Proof overview of Theorem \ref{thm:Kway_positive_AER}.}

By an equivariance argument, we work in relabeled coordinates where all planted relative matchings are the identity. Recall that, for each anchor row $i_0=i$, Algorithm~\ref{alg:Kway_general_rowwise_thresholding} selects the candidate tuple $\ib=(i,j_1,\dots,j_{K-1})$ maximizing the statistic $T_{\ib}$ from \eqref{eq:KwayMatching_Ti_def}, and then completes these greedy row-wise choices to permutation matrices.

For a true tuple $\ib = (i,i,\ldots,i)$, $T_{\ib}$ has mean $\binom K2 d$. A wrong tuple $\ib$ is classified by the multiplicities $\boldsymbol\ell=(\ell_0,\ldots,\ell_r)$ of its distinct latent labels, where $r$ is the number of labels other than the anchor label. Then $T_{\ib}$ has mean $d\sum_s\binom{\ell_s}{2}$, so the signal gap is
\[
    D_{\boldsymbol\ell}
    =
    \binom K2-\sum_{s=0}^r\binom{\ell_s}{2}.
\]
We show that the probability of a wrong tuple exceeding the typical true tuple score satisfies
\[
    \PP\!\left\{
    T_{\ib}\geq \left(\binom K2-\epsilon\right)d
    \right\}
    \leq
    \exp\!\left\{
    -\left(\frac{(D_{\boldsymbol\ell}-\epsilon)^2}{K(K-1)}+o(1)\right)
    \frac{d}{\tau^4}
    \right\}.
\]
At the same time there are only $O_K(n^r)$ such tuples. The key combinatorial fact driving the threshold is
\[
    \frac{D_{\boldsymbol\ell}^2}{K(K-1) r}\geq \frac{K-1}{K},
\]
with equality precisely for the one-outlier multiplicity pattern $\boldsymbol\ell=(K-1,1)$, corresponding to tuples such as $(i,i,\ldots,i,j)$ with $j\neq i$. Since $d/\tau^4= b \log n$, the union bound over all competitor patterns is $o(1)$ whenever $b>K/(K-1)$. Thus a fixed row is selected incorrectly with probability $o(1)$, and hence only $o(n)$ rows are wrong in expectation.

\appendix

\section{Proofs for the two-view negative result}
\label{sec:GM_AoN_proofs}
To prove Theorem \ref{thm:GM_AoN}, we first make some reductions.

We claim that without loss, it suffices to consider $\Pi_* = I$ in the left-hand side of \eqref{eq:GM_AoN}. Indeed, fix any event $A\subseteq \{0,1,\dots,n\}$. Then for any fixed $\Pi_*$, by the change of variables $\Pi \mapsto \Pi_*^\top \Pi =: Q$,
\begin{align*}
\EE \inangle{\1\!\inbraces{\tr \Pi_* \Pi^\top \in A}}
&=
\EE_{\Pi_*}\EE_{\Xb,\Zb}\insquare{
\frac{
\sum_{\Pi}
\1\!\inbraces{\tr \Pi_* \Pi^\top \in A}
\exp\inbraces{\frac{1}{\sigma^2}\tr \Pi \Xb(\Pi_*\Xb+\sigma\Zb)^\top}
}{
\sum_{\Pi}
\exp\inbraces{\frac{1}{\sigma^2}\tr \Pi \Xb(\Pi_*\Xb+\sigma\Zb)^\top}
}
} \\
&=
\EE_{\Pi_*}\EE_{\Xb,\Zb}\insquare{
\frac{
\sum_{Q}
\1\!\inbraces{\tr Q \in A}
\exp\inbraces{\frac{1}{\sigma^2}\tr Q \Xb\Xb^\top+\frac{1}{\sigma}\tr Q \Xb(\Pi_*^\top\Zb)^\top}
}{
\sum_{Q}
\exp\inbraces{\frac{1}{\sigma^2}\tr Q \Xb\Xb^\top+\frac{1}{\sigma}\tr Q \Xb(\Pi_*^\top\Zb)^\top}
}
} \\
&=
\EE_{\Xb,\Zb}\insquare{
\frac{
\sum_{\Pi}
\1\!\inbraces{\tr \Pi \in A}
\exp\inbraces{\frac{1}{\sigma^2}\tr \Pi \Xb\Xb^\top+\frac{1}{\sigma}\tr \Pi \Xb\Zb^\top}
}{
\sum_{\Pi}
\exp\inbraces{\frac{1}{\sigma^2}\tr \Pi \Xb\Xb^\top+\frac{1}{\sigma}\tr \Pi \Xb\Zb^\top}
}
} \\
&= \EE \inangle{\1\!\inbraces{\tr \Pi \in A}},
\end{align*}
where the third equality uses $\Pi_*^\top\Zb\overset{d}{=}\Zb$ and then renames $Q$ back to $\Pi$. Note that in the last line, and henceforth, $\inangle{\cdot}$ denotes the posterior in the $\Pi_* = I$ planted model.

We next derive an upper bound for one fixed overlap level. Fix $\alpha \in (0,1]$ such that $\alpha n$ is an integer. Then
    \begin{align}\label{eq:GM_overlapError_in_FPpotential}
        \EE \inangle{\1\!\inbraces{\tr \Pi_* \Pi^\top = \alpha n}}
        &= \EE\inangle{\1\!\inbraces{\tr \Pi = \alpha n}}
        = \EE \exp \inbraces{ n \log n \insquare{ \Psi_\sigma(\alpha) - \widetilde{\Psi}_\sigma }  },
    \end{align}
    where
    \begin{align}
        \Psi_{\sigma}(\alpha) &= \frac{1}{n \log n} \log \sum_{\Pi} \1\!\inbraces{ \tr \Pi = \alpha n  } \exp \inbraces{  \frac{1}{\sigma^2} \tr \Pi \Xb \Xb^\top + \frac{1}{\sigma} \tr \Pi \Xb \Zb^\top  }, \notag\\
        \widetilde{\Psi}_{\sigma} &= \frac{1}{n \log n} \log \sum_{\Pi} \exp \inbraces{  \frac{1}{\sigma^2} \tr \Pi \Xb \Xb^\top + \frac{1}{\sigma} \tr \Pi \Xb \Zb^\top  }.
    \end{align}
    We now push the expectation in \eqref{eq:GM_overlapError_in_FPpotential} into the exponent. Define
    \[
        F(\Xb,\Zb) = \log \sum_{\Pi} \exp \inbraces{  \frac{1}{\sigma^2} \tr \Pi \Xb \Xb^\top + \frac{1}{\sigma} \tr \Pi \Xb \Zb^\top  },
    \]
    so that $(n \log n)^{-1} F(\Xb,\Zb) = \widetilde{\Psi}_{\sigma}$. We claim that, for any $t > 0$, 

    \begin{equation}\label{eq:GM_UpperBound_GaussianConcentration_residual}
        \PP\left[ \abs{\frac{F}{n \log n} - \EE\left[\frac{F}{n \log n}\right] } \geq t  \right] \leq C' \exp \inbraces{ -c' \frac{n \log n}{b} t^2 } + C'\exp(-c'nd),
    \end{equation}
    for universal constants $C', c' > 0$. In particular, for every fixed $t>0$, since $d=\omega(\log n)$, this implies
    \begin{equation}\label{eq:GM_UpperBound_GaussianConcentration}
        \PP\left[ \abs{\frac{F}{n \log n} - \EE\left[\frac{F}{n \log n}\right] } \geq t  \right] \leq C' \exp \inbraces{ -c' \frac{n \log n}{b} t^2 },
    \end{equation}
    for universal constants $C', c' > 0$. Define $G(\Xb) = \EE_{\Zb}[F(\Xb,\Zb)]$. To show \eqref{eq:GM_UpperBound_GaussianConcentration_residual}, we first condition on $\Xb$. Then the map $\Zb \mapsto F(\Xb,\Zb)$ is $\norm{\Xb}_F/\sigma$-Lipschitz. Take $R = 2\sqrt{nd}$ and set $B_R=\inbraces{\norm{\Xb}_F\leq R}$. A standard chi-squared tail bound gives
    \begin{equation}\label{eq:GM_Xnorm_tail}
        \PP[\norm{\Xb}_F \geq R]
        \leq
        \exp(-cnd).
    \end{equation}
    Then by Gaussian concentration of Lipschitz functions (see e.g.~\cite[Theorem 1.3.4]{talagrand2010mean}) and \eqref{eq:GM_Xnorm_tail}, for every $s>0$,
    \begin{align}
        \PP[\abs{F-G(\Xb)}\geq s]
        &=
        \EE_{\Xb}\PP_{\Zb}[\abs{F-G(\Xb)}\geq s \mid \Xb]\nonumber\\
        &=
        \EE_{\Xb}\insquare{
        \PP_{\Zb}[\abs{F-G(\Xb)}\geq s \mid \Xb]\1_{B_R}
        }
        +
        \EE_{\Xb}\insquare{
        \PP_{\Zb}[\abs{F-G(\Xb)}\geq s \mid \Xb]\1_{B_R^c}
        }\nonumber\\
        &\leq
        2\exp\inparen{-c\frac{s^2\sigma^2}{R^2}}+\exp(-cnd).
        \label{eq:GM_FminusG_concentration}
    \end{align}
    We next show that $G(\Xb)$ concentrates as a function of $\Xb$. On the ball $B_R$, differentiating under the expectation and using $\E\|\Zb\|_F \leq \sqrt{nd}$  shows that $G$ is $L_G(R)$-Lipschitz with
    \[
        L_G(R) = \frac{2R}{\sigma^2} + \frac{\sqrt{nd}}{\sigma}.
    \]
    Let $\overline{G}$ be the extension of $G|_{B_R}$ to all of $\RR^{n\times d}$ given by
    \[
        \overline{G}(A)=\inf_{U\in B_R}\inbraces{G(U)+L_G(R)\norm{A-U}_F}.
    \]
    The purpose of $\overline{G}$ is to create a (globally) $L_G(R)$-Lipschitz proxy for $G$. Note that $\overline{G}=G$ on $B_R$. We now show that the difference $\abs{\EE[G(\Xb)]-\EE[\overline{G}(\Xb)]}$ is of lower order.  We have the bound, for each fixed $\Xb$,
    \begin{align*}
        \abs{G(\Xb)}
        &\leq
        \EE_{\Zb}[\abs{F(\Xb,\Zb)}] 
        \leq
        \log(n!)
        +
        \EE_{\Zb}\left[\max_{\Pi}
        \abs{
        \frac{1}{\sigma^2}\tr \Pi \Xb\Xb^\top
        +
        \frac{1}{\sigma}\tr \Pi \Xb\Zb^\top
        }\right] \\
        &\leq
        Cn\log n
        +
        \frac{C}{\sigma^2}\norm{\Xb}_F^2
        +
        \frac{C}{\sigma}\norm{\Xb}_F\sqrt{nd},
    \end{align*}
    where $C > 0$ is a constant. On $B_R^c$, a similar bound holds for $\overline{G}(\Xb)$. Since $\overline{G}$ is $L_G(R)$-Lipschitz, $\overline{G}(0)=G(0)=\log(n!)$, and $R\leq \norm{\Xb}_F$ on $B_R^c$, we have
    \begin{align*}
    \abs{\overline G(\Xb)}
    &\le \abs{\overline G(0)} + L_G(R)\norm{\Xb}_F 
    = \log(n!) + \left(\frac{2R}{\sigma^2}+\frac{\sqrt{nd}}{\sigma}\right)\norm{\Xb}_F \\
    &\le Cn\log n
        + \frac{C}{\sigma^2}\norm{\Xb}_F^2
        + \frac{C}{\sigma}\norm{\Xb}_F\sqrt{nd}.
    \end{align*}

    Therefore, using these bounds,
    \begin{align*}
        \abs{\EE[G(\Xb)]-\EE[\overline{G}(\Xb)]}
        &\leq
        \EE[
        \inparen{\abs{G(\Xb)}+\abs{\overline{G}(\Xb)}}\1_{B_R^c}
        ] \\
        &\leq
        \EE\insquare{
        \inparen{
        Cn\log n
        +
        \frac{C}{\sigma^2}\norm{\Xb}_F^2
        +
        \frac{C}{\sigma}\norm{\Xb}_F\sqrt{nd}
        }
        \1_{B_R^c}
         } \\
        &\leq
        \inparen{
        Cn\log n
        +
        \frac{C}{\sigma^2}\EE[\norm{\Xb}_F^4]^{1/2}
        +
        \frac{C}{\sigma}\sqrt{nd}\,\EE[\norm{\Xb}_F^2]^{1/2}
        }
        \PP[B_R^c]^{1/2} \\
        &\leq
        \inparen{Cn\log n+Cb n\log n+Cn\sqrt{bd\log n}}\exp(-cnd)
        \leq C\exp(-\bar{c}nd),
    \end{align*}
    where in the third line we used Cauchy--Schwarz, and in the last line we used \eqref{eq:GM_Xnorm_tail}, $\norm{\Xb}_F^2\sim\chi^2_{nd}$, and $\sigma^2=d/(b\log n)$, and where $\bar{c}  > 0$ is a constant.

    Thus $G$ concentrates on its expectation: for any $s > 0$,
    \begin{align}
        \PP[\abs{G(\Xb)-\EE[G(\Xb)]}\geq s]
        &\leq
        \PP[B_R^c]
        +
        \PP[
        \abs{\overline{G}(\Xb)-\EE[G(\Xb)]}
        \geq s, \, B_R
         ] \nonumber\\
        &\leq
        \PP[B_R^c]
        +
        \PP[
        \abs{\overline{G}(\Xb)-\EE[\overline{G}(\Xb)]}
        \geq
        s-\abs{\EE[G(\Xb)]-\EE[\overline{G}(\Xb)]}
        ] \nonumber\\
        &\leq
        \PP[B_R^c]
        +
        \PP[
        \abs{\overline{G}(\Xb)-\EE[\overline{G}(\Xb)]}
        \geq
        s-C\exp(-\bar c nd)
        ] \nonumber\\
        &\leq
        C\exp(-cnd)
        +
        C\exp\inparen{-c\frac{s^2}{L_G(R)^2}}.
        \label{eq:GM_G_concentration}
    \end{align}
    where in the last line, the first term comes from \eqref{eq:GM_Xnorm_tail}, while the second term comes from Gaussian concentration for the $L_G(R)$-Lipschitz function $\overline{G}$ with the lower order shift $C\exp(-\bar c nd)$ being absorbed by constants.

    Recall that we fixed $R=2\sqrt{nd}$, and that $\sigma^2=d/(b\log n)$ and $d=\omega(\log n)$. Then for large enough $n$ we have $R/\sigma = 2\sqrt{nb \log n}$ and $L_G(R) \leq C \sqrt{nb \log n}$. Therefore, using $\EE[G(\Xb)]=\EE[F]$, \eqref{eq:GM_FminusG_concentration}, and \eqref{eq:GM_G_concentration},
    \begin{align*}
        \PP[\abs{F-\EE[F]}\geq s]
        &\leq
        \PP[\abs{F-G(\Xb)}\geq s/2]
        +
        \PP[\abs{G(\Xb)-\EE[G(\Xb)]}\geq s/2] \\
        &\leq
        2\exp\inparen{-c\frac{s^2\sigma^2}{R^2}}
        +
        \exp(-cnd)
        +
        C\exp(-cnd)
        +
        C\exp\inparen{-c\frac{s^2}{L_G(R)^2}} \\
        &\leq
        C\exp\inparen{-c\frac{s^2}{bn\log n}}
        +
        C\exp(-cnd).
    \end{align*}
    Setting $s=t n\log n$ gives \eqref{eq:GM_UpperBound_GaussianConcentration_residual}, which implies \eqref{eq:GM_UpperBound_GaussianConcentration} for every fixed $t>0$. 
    An analogous argument for $\Psi_\sigma(\alpha)$ gives the same concentration scale.

    For $\epsilon > 0$, define
    \begin{align*}
        \cE_1 = \inbraces{\widetilde{\Psi}_\sigma \geq \EE \widetilde{\Psi}_\sigma - \frac{\epsilon}{3}}, \qquad\text{and}\qquad
        \cE_2 = \inbraces{\Psi_\sigma(\alpha) \leq \EE \Psi_\sigma(\alpha) + \frac{\epsilon}{6}}.
    \end{align*}
    Then
    \begin{align}\label{eq:GM_EqualDeltaN_goodEvents}
        \PP\insquare{ \cE_1 } \geq 1 - C \exp\inparen{- \frac{c \epsilon^2}{b} n \log n }, \qquad\text{and}\qquad
        \PP\insquare{ \cE_2 } \geq 1 - C \exp\inparen{-\frac{c\epsilon^2}{b}n\log n}.
    \end{align}
    Hence, using \eqref{eq:GM_overlapError_in_FPpotential}, \eqref{eq:GM_EqualDeltaN_goodEvents}, and $\Psi_\sigma(\alpha) \leq \widetilde{\Psi}_\sigma$,
    \begin{align}
        \EE \inangle{ \boldsymbol{1}\!\inbraces{\tr \Pi_* \Pi^\top  = \alpha n } }
        &\leq \exp\inbraces{n \log n \insquare{\EE \Psi_\sigma(\alpha) - \EE \widetilde{\Psi}_\sigma + \frac{\epsilon}{2}}  } + C\exp\inparen{-\frac{c\epsilon^2}{b}n\log n}. \label{eq:GM_concentration_of_FP_potentials}
    \end{align}

Our analysis therefore centers around establishing an upper bound for $\EE \Psi_\sigma(\alpha)$, and a lower bound for $\EE \widetilde{\Psi}_\sigma$. The former will be given next, while the latter was stated in Proposition \ref{prop:GM_full_free_energy_subcritical}. In Proposition  \ref{prop:GM_full_free_energy_subcritical}, the full asymptotics for $\EE \widetilde{\Psi}_\sigma$ was given, and this will also be used for the proof of Corollary \ref{cor:GM_infoTheoretic_nothing}.

\begin{proposition}\label{prop:GM_Psi_UB}
    Let $b > 0$ be fixed. Then for any fixed $\delta_0 > 0$, uniformly over all $\delta \in [\delta_0,1]$ such that $\delta n$ is an integer,
    \begin{equation}\label{eq:GM_Psi_UB}
        \EE \Psi_{\sigma}(\delta) \leq \frac{b}{2} + 1 + \frac{\delta}{2}(b-2) + o(1).
    \end{equation}
\end{proposition}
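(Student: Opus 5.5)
Following the proof overview, set $k=\delta n$ and split each $\Pi$ with $\tr\Pi=k$ into its fixed-point set $S=S(\Pi)$ and the derangement $\Pi|_{S^c}$. This gives the exponent decomposition $F(S)+R(S,\Pi)$, and hence
\[
\EE\Psi_\sigma(\delta)\le \frac{1}{n\log n}\EE\max_{|S|=k}F(S)+\frac{1}{n\log n}\log\sum_{|S|=k}\sum_{\Pi:S(\Pi)=S}\EE e^{R(S,\Pi)},
\]
where Jensen's inequality is applied to the second term. The two terms are bounded separately, aiming for $b\delta+o(1)$ and $(1-\delta)(1+b/2)+o(1)$ respectively. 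Summing them gives $1+b/2+\delta(b-2)/2+o(1)$, which is the claim.

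\textbf{Fixed-point term.} Use $F(S)\le \sigma^{-2}\|\Xb\|_F^2\cdot$(restricted) $+\,\sigma^{-1}\max_S\tr\Xb_S\Zb_S^\top$ and treat the two pieces separately.
\begin{itemize}
\item For the quadratic piece, $\max_{|S|=k}\|\Xb_S\|_F^2\le \sum_{i\in S}\|X_i\|^2$. Since $d=\omega(\log n)$, chi-squared concentration with a union bound over the $n$ rows gives $\max_i\|X_i\|^2\le d(1+o(1))$ with high probability, and a crude bound $\|\Xb\|_F^2$ controls the expectation on the complement. Hence this piece is at most $kd(1+o(1))/\sigma^2=(1+o(1))bk\log n$, i.e.\ $b\delta n\log n(1+o(1))$.
\item For the linear piece, conditional on $\Xb$, $\sigma^{-1}\tr\Xb_S\Zb_S^\top$ is Gaussian with variance $\sigma^{-2}\|\Xb_S\|_F^2\approx bk\log n$. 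There are $\binom nk\le e^{O(n)}$ choices of $S$, so the maximum is $O(\sqrt{bk\log n\cdot n})=o(n\log n)$.
\end{itemize}
Together these give the contribution $b\delta+o(1)$, uniformly in $\delta\ge\delta_0$.

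\textbf{Derangement term.} Integrate out $\Zb$ and then $\Xb_{S^c}$. With $P=\Pi|_{S^c}$ and $m=n-k$,
\[
\EE e^{R}=\EE\exp\{\xb^\top H\xb\},\qquad H=\frac{1}{2\sigma^2}\bigl(I_m+P+P^\top\bigr)\otimes I_d,
\]
where the symmetrized form of the overview's expression is used, so $\EE e^R=\det(I-2H)^{-1/2}$. Because $\sigma^2\to\infty$, $I-2H$ is positive definite. Expanding $-\tfrac12\log\det(I-2H)$ in powers of $\sigma^{-2}$, the relevant eigenvalues of $P+P^\top$ are $2\cos(2\pi j/L)$ on each $L$-cycle (with multiplicity $d$). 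The first-order term is $\frac{d}{2\sigma^2}\tr(I+P+P^\top)=\frac{dm}{2\sigma^2}$ since $P$ has no fixed points. The second-order term is $\frac{d}{4\sigma^4}\tr\bigl((I+P+P^\top)^2\bigr)$, which is maximized by $2$-cycles. This bound is $O(dm/\sigma^4)=O(m\log^2 n/d)=o(m\log n)$, using $d=\omega(\log n)$. Higher-order terms are handled by a geometric-series bound, since the spectral norm of $2H$ is at most $3/\sigma^2\to0$.

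Consequently every admissible $(S,P)$ satisfies $\log\EE e^R\le \frac{dm}{2\sigma^2}(1+o(1))=\frac{b}{2}m\log n(1+o(1))$. The ``2-cycles are worst'' analysis only matters for constants and is not needed at this scale. The number of pairs $(S,P)$ is at most $\binom nk\,(n-k)!$, so its logarithm is $(1-\delta)n\log n+O(n)$. The derangement term therefore contributes $(1-\delta)(1+b/2)+o(1)$.

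The main obstacle is the derangement MGF. It requires an exact Gaussian computation after integrating out $\Zb$ (the coefficient $1/(2\sigma^2)$ absorbs the $\frac{1}{2\sigma^2}\|\cdot\|^2$ from $\Zb$), together with a spectral bound on $\log\det$ that is uniform in the cycle structure, with errors $o(m\log n)$ uniformly over $\delta\in[\delta_0,1]$. I would first verify the exact form of $H$ and the uniform spectral bound $\|2H\|\le 3/\sigma^2$. Uniformity in $\delta$ is then automatic, since all error terms are bounded independently of $k$.
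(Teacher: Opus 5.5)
Your proof is correct and uses the same skeleton as the paper. You split off the fixed-point maximum $\max_{|S|=k}F(S)$, apply Jensen to the derangement sum, and compute the same Gaussian MGF $\EE e^{R}=\det(I-2H)^{-1/2}$. Your fixed-point bounds match the paper's up to bookkeeping. Your first bullet should read $\max_{|S|=k}\|\Xb_S\|_F^2\le k\max_i\|X_i\|_2^2$; the inequality with $\sum_{i\in S}$ is garbled as written.

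The real difference is how you bound the determinant. The paper computes it exactly via circulant eigenvalues. It writes each cycle's contribution as $G(\ell)=(\xi/\rho)^{\ell}(\rho^{\ell}-1)^2$ and proves $G(\ell)\ge G(2)G(\ell-2)$ and $G(3)^2>G(2)^3$, so that the maximizing derangement consists of $2$-cycles. It then Taylor-expands the resulting bound $[(1-3/\sigma^2)(1+1/\sigma^2)]^{-d(n-k)/4}$ to first order anyway. You instead note two things:
\begin{itemize}
\item the first-order term $\tfrac12\tr(2H)=dm/(2\sigma^2)$ is the same for every fixed-point-free $P$, because $\tr P=0$;
\item everything beyond it is $O(dm/\sigma^4)=O(m\log^2 n/d)=o(m\log n)$ uniformly, since $\|2H\|_{\mathrm{op}}\le 3/\sigma^2$.
\end{itemize}
This shows the cycle-structure optimization is only a second-order refinement, invisible at the $n\log n$ scale. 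The same shortcut would also handle term (B) in the paper's second-moment computation. The paper's route buys an exact maximizer, i.e.\ a sharper constant at order $dm/\sigma^4$, which this proposition does not need.

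A small side benefit of your cruder counts ($\binom nk\le 2^n$ and the row-maximum bound) is that they are visibly uniform over all $\delta\in[0,1]$, not just $\delta\ge\delta_0$. That wider uniformity is what the paper implicitly relies on when it reuses these estimates for $k\le\eta n$ in the proof of Proposition~\ref{prop:GM_full_free_energy_subcritical}.
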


\begin{proof}[Proof of Theorem \ref{thm:GM_AoN}]
    We first prove (i).
    Take $m \in \inbraces{\ceil{\delta n},\dots,n}$ and set $\delta_m = m/n$. Combining \eqref{eq:GM_concentration_of_FP_potentials}, Proposition \ref{prop:GM_Psi_UB} applied with $\delta$ replaced by $\delta_m$, and Proposition \ref{prop:GM_full_free_energy_subcritical}, for any $\epsilon > 0$ we have that
    \[
        \EE\inangle{\1\!\inbraces{\tr \Pi = m}}
        \leq \exp\inbraces{n\log n\insquare{\frac{\delta_m}{2}(b-2) + \frac{\epsilon}{2} + o(1)}} + C\exp\inparen{-\frac{c\epsilon^2}{b}n\log n}.
    \]
    Since $\delta_m \geq \delta$ and $b < 2$,
    $
        \frac{\delta_m}{2}(b-2) \leq \frac{\delta}{2}(b-2).
    $
    Choosing $\epsilon = \frac{\delta}{4}(2-b)$ gives
    \[
        \EE\inangle{\1\!\inbraces{\tr \Pi = m}}
        \leq \exp\inbraces{n\log n\insquare{\frac{3\delta}{8}(b-2) + o(1)}} + C\exp\inparen{-C_{b,\delta}n\log n}
    \]
    for some constant $C_{b,\delta} > 0$. Summing over the at most $n$ possible values of $m$ proves (i).

    We next prove (ii). The $K=2$ case of Theorem \ref{thm:Kway_positive_AER}, together with the equivalence in Remark \ref{remark:Kview_TwoView_Mapping}, gives an estimator $\widehat\Pi=\widehat\Pi(\Xb,\Yb)$ such that
    \[
        \frac{1}{n}\tr\widehat\Pi\Pi_*^\top\to 1
        \qquad\text{in probability}
    \]
    whenever $b>2$. Since $0\leq n-\tr\widehat\Pi\Pi_*^\top\leq n$, this also implies
    \[
        \EE\norm{\widehat\Pi-\Pi_*}_F^2
        =
        2\,\EE\inparen{n-\tr\widehat\Pi\Pi_*^\top}
        =
        o(n).
    \]
    The posterior mean $\inangle{\Pi}$ minimizes squared error among all estimators, so
    $
        \EE\norm{\inangle{\Pi}-\Pi_*}_F^2=o(n).
    $
    Given $(\Xb,\Yb)$, let $\Pi$ be a fresh sample from the posterior law of $\Pi_*$. Then conditionally on $(\Xb,\Yb)$, $\Pi_*$ and $\Pi$ are two independent samples from this same law. Hence
    \begin{align*}
        \EE\inangle{\norm{\Pi-\Pi_*}_F^2}
        &=
        \EE\inangle{
        \norm{\Pi-\Pi_* \pm\inangle{\Pi} }_F^2
        } 
        =
        \EE\insquare{
        \inangle{
        \norm{\Pi-\inangle{\Pi}}_F^2
        }
        +
        \norm{\Pi_*-\inangle{\Pi}}_F^2
        } \\
        &=
        \EE\norm{\Pi_*-\inangle{\Pi}}_F^2
        +
        \EE\norm{\inangle{\Pi}-\Pi_*}_F^2 
        =
        o(n).
    \end{align*}
    Here the cross term vanishes because $\inangle{\Pi-\inangle{\Pi}}=0$.
    On the event $\tr\Pi_*\Pi^\top\leq (1-\delta)n$, we have $\norm{\Pi-\Pi_*}_F^2\geq 2\delta n$. Therefore
    \[
        \EE \inangle{\1\!\inbraces{\tr \Pi_* \Pi^\top \leq (1-\delta)n}}
        \leq
        \frac{1}{2\delta n}
        \EE\inangle{\norm{\Pi-\Pi_*}_F^2}
        \to 0. \qedhere
    \]
\end{proof}

\subsection{Upper bound on restricted free energy}

\begin{proof}[Proof of Proposition \ref{prop:GM_Psi_UB}]
For any permutation $\Pi$ of $[n]$, let $S(\Pi) \subset [n]$ denote the set of fixed points of $\Pi$. We have 
\begin{align*}
    \tr \Pi \Xb \Yb^\top &= \tr \Pi \Xb \Xb^\top + \sigma\tr \Pi \Xb \Zb^\top \\
    &= \sum_{i \in S(\Pi)} \norm{X_i}_2^2 +  \sum_{i \in S(\Pi)} \sigma  X_i^\top Z_i + \sum_{i \notin S(\Pi)} X_{\Pi(i)}^\top X_i + \sum_{i \notin S(\Pi)} \sigma X_{\Pi(i)}^\top Z_i.
\end{align*}
Set $k=\delta n \in \ZZ$.  We have
\begin{align*}
    &\EE \Psi_{\sigma}(\delta) = \frac{1}{n \log n} \EE \log \sum_{\Pi : \tr \Pi = k} \exp \inbraces{  \sum_{i \in S(\Pi)} \frac{1}{\sigma^2}\norm{X_i}_2^2 +  \sum_{i \in S(\Pi)} \frac{1}{\sigma}X_i^\top Z_i + \sum_{i \notin S(\Pi)} \frac{1}{\sigma^2} X_{\Pi(i)}^\top X_i + \sum_{i \notin S(\Pi)} \frac{1}{\sigma} X_{\Pi(i)}^\top Z_i } \\
    &= \frac{1}{n \log n} \EE \log \insquare{ \sum_{\substack{S \subset [n] \\ \abs{S} = k}} \exp \inbraces{ \frac{1}{\sigma^2} \norm{\Xb_{S}}_{F}^2 + \frac{1}{\sigma} \tr \Xb_S \Zb_S^\top } \sum_{\Pi : S(\Pi) = S} \exp\inbraces{ \frac{1}{\sigma^2} \tr \Xb_{\Pi(S^c)} \Xb_{S^c}^\top + \frac{1}{\sigma} \tr \Xb_{\Pi(S^c)} \Zb_{S^c}^\top  }  } \\
    &\leq (\text{I}) + (\text{II})
\end{align*}
where $\Xb_S$ denotes the $\abs{S} \times d$ matrix consisting of the rows in $\Xb$ indexed by $S$, and similarly for $\Zb_S$, $\Xb_{\Pi(S^c)}, \Zb_{S^c}$, and where 
\begin{align*}
    (\text{I}) &= \frac{1}{n\log n} \EE \log \max_{\substack{S \subset [n] \\ \abs{S} = k}} \exp \inbraces{  \frac{1}{\sigma^2} \norm{\Xb_{S}}_{F}^2 + \frac{1}{\sigma} \tr \Xb_S \Zb_S^\top   } \\
    (\text{II}) &= \frac{1}{n\log n} \EE \log \sum_{\substack{S \subset [n] \\ \abs{S} = k}} \sum_{\Pi : S(\Pi) = S} \exp\inbraces{ \frac{1}{\sigma^2} \tr \Xb_{\Pi(S^c)} \Xb_{S^c}^\top + \frac{1}{\sigma} \tr \Xb_{\Pi(S^c)} \Zb_{S^c}^\top  },  
\end{align*}

Let us consider the diagonal part $(\text{I})$ first. We have the bounds
\begin{align*}
    \EE \max_{\substack{S \subset [n] \\ \abs{S} = k}} \norm{\Xb_{S}}_{F}^2 &\leq kd + \sqrt{C kd \log \binom{n}{k}} = (1+o(1)) kd \\
    \EE \max_{\substack{S \subset [n] \\ \abs{S} = k}} \tr \Xb_S \Zb_S^\top  &\leq (1+o(1)) \sqrt{2 kd \log \binom{n}{k} } = O_{\delta}(k\sqrt{d}),
\end{align*}

where the first line follows from standard chi-squared maximum bounds (see e.g.~\cite[Lemma 1]{laurent2000adaptive}), and 
where the second line follows since conditionally on $\Xb_S$, $\tr \Xb_S \Zb_S^\top \sim \cN(0, \norm{\Xb_S}_F^2)$, and a maximum of Gaussians bound followed by the first line leads to the bound. Therefore, using $\sigma^2 = \frac{d}{b \log n}$, 
\begin{align}\label{eq:geomMatch_overlapError_diagTerm}
    (\text{I}) &\leq \frac{1}{n\log n} \insquare{ \frac{1}{\sigma^2} (1+o(1))kd + \frac{1}{\sigma} O_\delta(k\sqrt{d })  } = (1+o(1))b\delta + O_\delta\!\inparen{\frac{1}{\sqrt{\log n}}}.
\end{align}
We now turn to term $(\text{II})$. By Jensen's inequality,
\[
(\text{II}) \leq \frac{1}{n\log n}  \log \sum_{\substack{S \subset [n] \\ \abs{S} = k}} \sum_{\Pi : S(\Pi) = S} \underbrace{\EE \exp\inbraces{ \frac{1}{\sigma^2} \tr \Xb_{\Pi(S^c)} \Xb_{S^c}^\top + \frac{1}{\sigma} \tr \Xb_{\Pi(S^c)} \Zb_{S^c}^\top  }}_{=: V(S,\Pi)}.
\]
Fix $S$ and some $\Pi$ with $S(\Pi) = S$. We first evaluate the Gaussian moment generating function in $V(S, \Pi)$, conditionally on $\Xb$. This gives us
\begin{align*}
    V(S, \Pi) =  \EE \exp \inbraces{ \frac{1}{2\sigma^2} \norm{\Xb_{S^c}}_{F}^2  + \frac{1}{\sigma^2} \tr \Xb_{\Pi(S^c)} \Xb_{S^c}^\top  } = \EE\exp \bigg\{\xb^\top \underbrace{\frac{1}{2\sigma^2} \inparen{  I_{n-k} \otimes I_{d} + 2 \Pi \big|_{S^c} \otimes I_{d}  }}_{= H} \xb  \bigg\},
\end{align*}
where $\xb = \text{vec}(\Xb_{S^c}) \in \RR^{(n-k)d}$ is its vectorization and $H \in \RR^{(n-k)d \times (n-k)d}$. We let $\Pi\big|_{S^c}$ denote the $(n-k)\times(n-k)$ submatrix of $\Pi$ obtained by keeping only the rows and columns indexed by $S^c$. We replace the exponent by $\xb^\top H \xb = \tfrac12 \xb^\top (H + H^\top) \xb$. Let the real $(n-k)d$ eigenvalues of $\tfrac{1}{2}(H + H^\top)$ be $\lambda_1,\dots,\lambda_{(n-k)d}$.  By spectral decomposition of the symmetric matrix $\tfrac{1}{2}(H + H^\top)$ and orthogonal invariance of the Gaussian, we have $V(S,\Pi) = \prod_{j=1}^{(n-k)d}\EE \exp \lambda_j x_j^2$, where $x_j$ is the $j$-th coordinate of $\xb$, and $x_j \sim \cN(0,1)$. Suppose that $\Pi\big|_{S^c}$ has cycle decomposition of lengths $\ell_1,\dots,\ell_r$, where $\ell_j \geq 2$ (no singletons) and $\sum_j \ell_j = n-k$. Since eigenvalues are invariant to simultaneous row and column permutation, and by orthogonal invariance of the Gaussian again, we can suppose without loss of generality that $\Pi \big|_{S^c}$ is in block diagonal form, where each of the $r$ blocks is a circulant matrix with the $j$-th block's contribution to $\Pi\big|_{S^c}+\Pi\big|_{S^c}^{\top}$ being 
\[
\begin{bmatrix}
0 & 1 &   &   &   \\
  & 0 & 1 &   &   \\
  &   & 0 & \ddots &   \\
  &   &   & \ddots & 1 \\
1 &   &   &   & 0
\end{bmatrix} + \begin{bmatrix}
0 & 1 &   &   &   \\
  & 0 & 1 &   &   \\
  &   & 0 & \ddots &   \\
  &   &   & \ddots & 1 \\
1 &   &   &   & 0
\end{bmatrix}^\top =
\begin{bmatrix}
0 & 1 &  &  & 1 \\
1 & 0 & 1 &  &  \\
 & 1 & 0 & \ddots &  \\
 &  & \ddots & \ddots & 1 \\
1 &  &  & 1 & 0
\end{bmatrix} \in \RR^{\ell_j \times \ell_j}.
\] 
Let $\omega_{\ell_j} = \exp\inparen{\frac{2\pi \bi}{\ell_j}}$ be the $\ell_j$-th root of unity. The eigenvalues of the matrix in the above display are $\big\{  \omega_{\ell_j}^{a} + \omega_{\ell_j}^{-a}  \big\}_{a=0}^{\ell_j - 1}$. For $x \sim \cN(0,1)$, we have $\EE \exp t x^2 = (1-2t)^{-1/2}$ for $t<\frac{1}{2}$. Since $\abs{1+\omega_{\ell_j}^{a}+\omega_{\ell_j}^{-a}}\leq 3$ and $\sigma^2\to\infty$, it follows that for large enough $n$,
\begin{align}\label{eq:geomMatch_overlapError_V_in_EigvalCirculant}
    V(S,\Pi) = \prod_{j=1}^{r} \prod_{a=0}^{\ell_j - 1} \inparen{ 1 - \frac{1}{\sigma^2} \inparen{1 + \omega_{\ell_j}^{a} + \omega_{\ell_j}^{-a}}  }^{-\frac{d}{2}}.
\end{align}
We now give an upper bound for $V(S,\Pi)$. For any fixed integer $\ell \geq 2$, define $\omega = \exp\inparen{\frac{2\pi \bi}{\ell}}$. Let $\xi = \frac{1}{\sigma^2}$ and we have 
\begin{align*}
G(\ell) &:= \prod_{a=0}^{\ell - 1} \inparen{ 1 - \xi(  1 + \omega^a + \omega^{-a} )  } = \prod_{a=0}^{\ell-1} \frac{-\xi \omega^{2a} + (1-\xi)\omega^{a} - \xi }{\omega^a} \\
&= (-1)^{\ell-1} \prod_{a=0}^{\ell -1} \underbrace{(-\xi z^2 + (1-\xi) z - \xi)}_{=: \, q(z)} \Big|_{z = \omega^a}.
\end{align*}
Observe that the two roots of $q(z)$ are real if $0 < \xi < 1/3$ (valid for large enough $n$). Furthermore, the two roots are reciprocals of each other: call them $\rho$ and $1/\rho$, where $\rho = (2\xi)^{-1}(1 - \xi + \sqrt{-3\xi^2 - 2\xi + 1})$ is the larger of the two. Note also $\rho > 1$ if $\xi < 1/3$. We may write $q(z) = -\xi (z-\rho)(z-\rho^{-1})$. By definition of the roots of unity, we have the identity $(-1)^\ell \prod_{a=0}^{\ell-1} (\omega^a - x) = x^\ell - 1$ for any $x \in \R$.  Thus 
\begin{align}
    G(\ell) &= (-1)^{\ell-1} \prod_{a=0}^{\ell-1} q(\omega^a) = (-1)^{\ell-1} \prod_{a=0}^{\ell-1} -\xi(\omega^a - \rho) (\omega^{a} - \rho^{-1}) \nonumber\\
    &= (-1)^{\ell-1} (-\xi)^{\ell} (\rho^{\ell} - 1) (\rho^{-\ell} - 1) = \inparen{\frac{\xi}{\rho}}^{\ell} (\rho^\ell - 1)^2. \label{eq:geomMatch_overlapError_G(ell)_finalExpression}
\end{align}
We claim the following: For $n$ large enough so that $\xi < 1/3$ and hence $\rho > 1$,
\begin{enumerate}[(i)]
    \item For $\ell \geq 4$, $G(\ell) \geq G(2) G(\ell-2)$.
    \item $G(3)^2 > G(2)^3$.
\end{enumerate}
Statement (i) reduces to showing that $\rho^\ell - 1 \geq (\rho^2 - 1)(\rho^{\ell-2} - 1)$. This further simplifies to $\rho^{\ell-2} + \rho^{2} \geq 2$ which is true since $\rho > 1$. For statement (ii), since $\rho > 1$, it suffices to show $(\rho^3 - 1)^2 > (\rho^2 - 1)^3$. This simplifies to $3\rho^4 - 2\rho^3 -3\rho^2 + 2 > 0$ where the LHS factorizes as $(\rho - 1)^2(3\rho^2 + 4\rho + 2)$ which proves the statement.

Note that 
\[
V(S,\Pi) = \exp\inbraces{ -\frac{d}{2} \sum_{j=1}^{r} \log \prod_{a=0}^{\ell_j - 1} (1-\xi(1+\omega_{\ell_j}^{a}+\omega_{\ell_j}^{-a}))  } = \exp\inbraces{ -\frac{d}{2} \sum_{j=1}^{r} \log G(\ell_j) }.
\]
Therefore, by Claims (i) and (ii), for any fixed $S$, $V(S,\Pi)$ is maximized for $\Pi$'s whose derangement part $\Pi \big|_{S^c}$ has a maximal number of two cycles. It follows that 
\begin{align}
V(S,\Pi) &\leq \exp \inparen{ -\frac{d}{2}\times \begin{cases}
    \frac{n-k}{2} \log G(2) & n-k \text{ even} \\
    \frac{n-k-3}{2} \log G(2) + \log G(3) & n-k \text{ odd} 
\end{cases} } \nonumber \\
&\leq \exp \inbraces{ - \frac{d(n-k)}{4} \log G(2)  } \nonumber \\
&= \insquare{ \inparen{1 - \frac{3}{\sigma^2}} \inparen{1 + \frac{1}{\sigma^2}}  }^{-\frac{d(n-k)}{4}}.\label{eq:geomMatch_overlapError_V_finalBound}
\end{align}
Thus, upper bounding the number of derangements of $S^c$ as $2(n-k)!/e \leq C e^{(n-k) \log n}$, and upper bounding $\binom{n}{k} \leq \inparen{\tfrac{e}{\delta}}^k$ we can upper bound term $(\text{II})$ as 
\begin{align}
    \text{(II)} &\leq \frac{1}{n \log n} \log \sum_{\substack{S \subset [n] \\ \abs{S} = k}} \sum_{\Pi : S(\Pi) = S} V(S,\Pi) \nonumber \\
    &\leq \frac{1}{n \log n} \log \inbraces{ \inparen{\frac{e}{\delta}}^k C\exp( (n-k)\log n )  \insquare{ \inparen{1 - \frac{3}{\sigma^2}} \inparen{1 + \frac{1}{\sigma^2}}  }^{-\frac{d(n-k)}{4}}  } \nonumber\\
    &\leq 1-\delta + \frac{d(n-k)}{4n\log n}\frac{2}{\sigma^2}(1+o(1)) + O\inparen{\frac{1}{\log n}} \nonumber \\
    &= \inparen{\frac{b}{2} + 1}(1-\delta) + o(1), \label{eq:geomMatch_overlapError_derangementPart}
\end{align}
where the third line follows from Taylor expanding $\log \big(1- \big(  \tfrac{2}{\sigma^2} + \tfrac{3}{\sigma^4} \big) \big) = - \tfrac{2}{\sigma^2} + O(\frac{1}{\sigma^4})$, and the last line by plugging in the definition of $\sigma^2 = \tfrac{d}{b \log n}$. The estimates above are uniform over $\delta \in [\delta_0,1]$, since $\log(e/\delta)$ is then uniformly bounded. Combining \eqref{eq:geomMatch_overlapError_diagTerm} and \eqref{eq:geomMatch_overlapError_derangementPart} gives \eqref{eq:GM_Psi_UB}.
\end{proof}

\subsection{Asymptotics of the full free energy}

This section gives the proof for Proposition \ref{prop:GM_full_free_energy_subcritical}.

By symmetry, it suffices to prove Proposition \ref{prop:GM_full_free_energy_subcritical} under $\Pi_*=I$, in which case the free energy in \eqref{eq:GM_fullFE_def} reduces exactly to $\widetilde{\Psi}_\sigma$. Thus it suffices to show that
\[
    \EE\widetilde{\Psi}_{\sigma}
    =
    1+\frac b2+o(1).
\]
The proposition will be shown by first reducing $\widetilde{\Psi}_\sigma$ as follows: define 
\begin{equation}\label{eq:GM_noiseFE_def}
    \cZ = \sum_{\Pi} \exp \inbraces{ \sqrt{b \log n} \tr \Pi \frac{\Xb \Zb^\top}{\sqrt{d}}  }
\end{equation}
This quantity $\cZ$ can be thought of as the noise part of $\widetilde{\Psi}_{\sigma}$ in \eqref{eq:GM_overlapError_in_FPpotential}. The following lemma shows that we can focus our attention on $\cZ$, because dropping the signal part in $\widetilde{\Psi}_{\sigma}$ costs only an arbitrarily small exponential factor at the $n\log n$ scale.

\begin{lemma}\label{lemma:GM_uniformSignalLowerBound}
Suppose $b > 0$ is fixed. Then for every $\eta > 0$, there exists a constant $c_{\eta,b} > 0$ such that, for all sufficiently large $n$,
\[
    \PP\insquare{ \widetilde{\Psi}_{\sigma} \geq \frac{1}{n\log n}\log \cZ - \eta } \geq 1 - \exp\inbraces{- c_{\eta,b} n d }.
\]
\end{lemma}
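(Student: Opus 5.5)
The idea is to compare the two partition functions term by term. Under $\Pi_*=I$, the free energy $\widetilde\Psi_\sigma$ is $(n\log n)^{-1}$ times the log of $\sum_\Pi \exp\{\sigma^{-2}\tr\Pi\Xb\Xb^\top+\sigma^{-1}\tr\Pi\Xb\Zb^\top\}$. Because $\sigma^{-1}=\sqrt{b\log n/d}$, the noise term equals $\sqrt{b\log n}\,\tr\Pi\,\Xb\Zb^\top/\sqrt d$, which is exactly the exponent in $\cZ$. So the two sums differ only by the factor $\exp\{\sigma^{-2}\tr\Pi\Xb\Xb^\top\}$ inside each summand. If this factor is bounded below uniformly in $\Pi$ by $\exp\{-\eta n\log n\}$, we get $\widetilde\Psi_\sigma\ge (n\log n)^{-1}\log\cZ-\eta$.

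\textbf{Deterministic reduction.} For any permutation $\Pi$, Cauchy--Schwarz row by row gives
\[
\tr \Pi\Xb\Xb^\top=\sum_i X_{\Pi(i)}^\top X_i\ \ge\ -\sum_i\norm{X_{\Pi(i)}}\norm{X_i}\ \ge\ -\norm{\Xb}_F^2 .
\]
Hence, on the event $B=\{\norm{\Xb}_F^2\le 2nd\}$, every summand satisfies
\[
\sigma^{-2}\tr\Pi\Xb\Xb^\top\ \ge\ -\frac{2nd}{\sigma^2}=-2bn\log n .
\]
This is of order $n\log n$ but carries the fixed constant $2b$, not $\eta$, so this crude bound alone is too weak. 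The loss has to be made arbitrarily small.

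\textbf{Refinement via centering.} The obstacle is that $\tr\Pi\Xb\Xb^\top$ is negative only because of off-diagonal inner products, and these are small in high dimension. For $i\ne j$ we have $X_i^\top X_j\approx \sqrt d\cdot O(\sqrt{\log n})$. I would take the event
\[
B'=\Bigl\{\max_{i\ne j}|X_i^\top X_j|\le C\sqrt{d\log n}+\epsilon d\Bigr\}\cap\{\min_i\norm{X_i}^2\ge 0\}.
\]
Then for every $\Pi$,
\[
\sigma^{-2}\tr\Pi\Xb\Xb^\top\ \ge\ -n\,\sigma^{-2}\bigl(C\sqrt{d\log n}+\epsilon d\bigr)=-n b\log n\bigl(C\sqrt{\log n/d}+\epsilon\bigr).
\]
Since $d=\omega(\log n)$, the first term is $o(n\log n)$. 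Choosing $\epsilon=\eta/(2b)$ makes the total loss at most $\eta n\log n$ for large $n$.

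\textbf{Probability of $B'$.} Each $X_i^\top X_j$ with $i\ne j$ is a sum of $d$ i.i.d. products of independent standard Gaussians, hence sub-exponential. Bernstein's inequality gives
\[
\PP\bigl(|X_i^\top X_j|\ge \epsilon d\bigr)\le 2\exp(-c\epsilon^2 d)\quad(\epsilon\le 1).
\]
A union bound over $n^2$ pairs yields $\PP(B'^c)\le 2n^2e^{-c\epsilon^2 d}$. This is the main point to check, because the lemma asks for failure probability $\exp(-c_{\eta,b}nd)$. A pairwise union bound does not give the factor $n$ in the exponent, so I must avoid controlling individual inner products.

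\textbf{Fix for the probability rate.} I would bound $\sum_i X_{\Pi(i)}^\top X_i$ globally instead. Off the diagonal, the symmetrized quadratic form is at least
\[
-\lambda_{\max}\bigl(\Xb\Xb^\top-\mathrm{diag}(\norm{X_i}^2)\bigr)\cdot(\text{number of non-fixed indices}).
\]
This does not quite work either. A simpler route is to write $\Xb\Xb^\top=\Gb\Gb^\top$ and use
\[
\tr\Pi\Xb\Xb^\top\ \ge\ -n\,\norm{\Xb\Xb^\top}_{op}.
\]
Standard singular-value concentration gives $\norm{\Xb}_{op}^2\le(\sqrt d+\sqrt n+t)^2$ with probability at least $1-e^{-t^2/2}$. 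Taking $t=\sqrt{\epsilon nd}$ gives a failure probability of $e^{-\epsilon nd/2}$, on the required $nd$ scale. The resulting bound is
\[
\sigma^{-2}\tr\Pi\Xb\Xb^\top\ \ge\ -n\,\sigma^{-2}(\sqrt d+\sqrt n+\sqrt{\epsilon nd})^2 .
\]
This is $o(n\log n)$ only when $n\ll d$, so in the regime $n\gtrsim d$ it fails.

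So I expect this step to be the real difficulty. My plan there is a hybrid: use operator-norm concentration of the centered Gram matrix on the sub-blocks where it suffices, and truncate the remaining contribution using the $nd$-scale chi-squared tail. Reconciling the $\exp(-cnd)$ rate with a uniform-in-$\Pi$ bound is where I would spend the effort.
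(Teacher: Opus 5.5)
You have the reduction right: since $\sigma^{-1}\tr\Pi\Xb\Zb^\top=\sqrt{b\log n}\,\tr\Pi\Xb\Zb^\top/\sqrt d$, it suffices to show $\min_\Pi\sigma^{-2}\tr\Pi\Xb\Xb^\top\ge-\eta n\log n$ off an event of probability at most $\exp(-c_{\eta,b}nd)$. But the proposal never establishes this step, which carries all the content of the lemma. As you note yourself, each of your attempts fails:
\begin{itemize}
\item The Cauchy--Schwarz bound loses the fixed constant $2b$.
\item The pairwise inner-product event fails with probability about $n^2e^{-c\epsilon^2 d}$, not $e^{-cnd}$.
\item The operator-norm bound is $o(n\log n)$ only when $n\ll d$. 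Centering by $dI$ does not rescue it, since $\|\Xb\Xb^\top-dI\|_{\mathrm{op}}\approx n+2\sqrt{nd}$.
\item The closing ``hybrid'' plan is a direction, not an argument.
\end{itemize}
So there is a genuine gap exactly at the step you flag.

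The missing idea is that you do not need a bound holding deterministically for all $\Pi$ on one cleverly designed event. A union bound over all $n!\le e^{n\log n}$ permutations is affordable, because for each fixed $\Pi$ the \emph{whole} sum $\tr\Pi\Xb\Xb^\top$ has a lower tail on the $nd$ scale. Write $X^{(1)},\dots,X^{(d)}$ for the columns of $\Xb$ and $A_\Pi=\frac12(\Pi+\Pi^\top)$. Then $\tr\Pi\Xb\Xb^\top=\sum_{a=1}^d (X^{(a)})^\top A_\Pi X^{(a)}$. This is a quadratic form in $nd$ standard Gaussians with matrix $I_d\otimes A_\Pi$, which has trace $d\tr\Pi\ge 0$, operator norm at most $1$, and squared Frobenius norm at most $nd$. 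Hanson--Wright therefore gives $\PP[\tr\Pi\Xb\Xb^\top\le -t]\le 2\exp\{-c\min(t^2/(nd),t)\}$. Take $t=\eta nd/b$, which becomes exactly $\eta n\log n$ after multiplying by $\sigma^{-2}=b\log n/d$. This gives $2\exp\{-c\min(\eta^2/b^2,\eta/b)\,nd\}$. The factor $n!$ from the union bound is absorbed because $d=\omega(\log n)$, which yields the stated rate. On the complementary event, every summand of the full partition function is at least $e^{-\eta n\log n}$ times the corresponding summand of $\cZ$.

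Your pairwise event with $\epsilon=\min(\eta/b,1)$ already gives the inequality with probability $1-o(1)$. That is all the proof of Proposition~\ref{prop:GM_full_free_energy_subcritical} actually uses. It does not, however, prove the lemma with the stated $\exp(-c_{\eta,b}nd)$ bound.
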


\begin{proof}
We first show that 
\begin{equation}\label{eq:GM_PsiTilde_UniformSignalBound}
    \PP\insquare{ \min_{\Pi} \frac{1}{\sigma^2}\tr \Pi \Xb\Xb^\top \leq - \eta n \log n } \leq \exp\inbraces{- c_{\eta,b} n d }.
\end{equation}
Fix a permutation matrix $\Pi$ and write $X^{(1)},\dots,X^{(d)} \in \RR^n$ for the columns of $\Xb$. Let $A_\Pi = \frac{1}{2}(\Pi + \Pi^\top)$.
Then
\[
    \tr \Pi\Xb\Xb^\top = \sum_{a=1}^{d} (X^{(a)})^\top A_\Pi X^{(a)}.
\]
If $g \sim \cN(0,I_{nd})$ and $B_\Pi = I_d \otimes A_\Pi$, then the right-hand side of the above display has the same distribution as $g^\top B_\Pi g$. Since $\tr B_\Pi = d\tr \Pi \geq 0$, $\norm{B_\Pi}_{\text{op}} \leq 1$, and $\norm{B_\Pi}_{F}^2 \leq nd$, the Hanson-Wright inequality for Gaussian quadratic forms (see e.g.~\cite[Theorem 6.2.1]{vershynin2018}) gives, for any $t > 0$,
\[
    \PP\insquare{ \tr \Pi\Xb\Xb^\top \leq -t }
    \leq 2\exp\inbraces{-c\min\inparen{\frac{t^2}{nd}, t}}.
\]
Taking $t = \eta nd/b$, and a union bound over at most $n! \leq \exp(n\log n)$ permutation matrices gives
\[
    \PP\insquare{ \min_{\Pi} \frac{1}{\sigma^2}\tr \Pi \Xb\Xb^\top \leq - \eta n \log n }
    \leq 2\exp\inbraces{n\log n - c'_{\eta,b}nd},
\]
for some constant $c'_{\eta,b} > 0$. Since $d = \omega(\log n)$ this proves \eqref{eq:GM_PsiTilde_UniformSignalBound} for sufficiently large $n$.

On the complementary event in \eqref{eq:GM_PsiTilde_UniformSignalBound}, for every $\Pi$,
$
    \frac{1}{\sigma^2}\tr \Pi\Xb\Xb^\top \geq -\eta n\log n.
$
Therefore,
\begin{align*}
    \sum_{\Pi} \exp\inbraces{ \frac{1}{\sigma^2}\tr \Pi\Xb\Xb^\top + \frac{1}{\sigma}\tr \Pi\Xb\Zb^\top}
    &\geq e^{-\eta n\log n}
    \sum_{\Pi} \exp\inbraces{ \frac{1}{\sigma}\tr \Pi\Xb\Zb^\top} = e^{-\eta n\log n}\cZ.
\end{align*}
Taking logarithms and dividing by $n\log n$ finishes the proof.
\end{proof}

The main work of the lower bound is therefore the asymptotics of the free energy $\cZ$. 

\begin{theorem}\label{thm:GM_noiseFE_asymptotics}
    Let $b > 0$ be fixed. Then
    \begin{equation}\label{eq:GM_noiseFE_asymptotics}
        \lim_{n \to \infty} \frac{1}{n\log n} \log \cZ =
        \begin{cases}
            \frac{b}{2} + 1, & b < 2,\\
            \sqrt{2b}, & b \geq 2,
        \end{cases}
        \qquad \text{a.s.}
    \end{equation}
\end{theorem}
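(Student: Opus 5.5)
Write $W=\Xb\Zb^\top/\sqrt d$ and $\beta=\sqrt{b\log n}$, so that $\cZ=\sum_\Pi e^{\beta\tr\Pi W}$. I will prove matching upper and lower bounds on $\frac{1}{n\log n}\EE\log\cZ$, and then upgrade them to almost sure convergence.

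\textbf{Concentration.} The map $\Zb\mapsto\log\cZ$ is $\beta\norm{\Xb}_F/\sqrt d$-Lipschitz. Repeating the argument behind \eqref{eq:GM_UpperBound_GaussianConcentration_residual}, with the same truncation to $\norm{\Xb}_F\le 2\sqrt{nd}$, gives deviation probabilities of order $\exp\{-c t^2 n\log n/b\}+\exp(-cnd)$. These are summable in $n$, so Borel--Cantelli reduces the a.s.\ statement to computing $\lim\frac{1}{n\log n}\EE\log\cZ$.

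\textbf{Upper bound.} I use two bounds and take their minimum. The first is the annealed bound via Jensen. Conditionally on $\Xb$, $\tr\Pi W\sim\cN(0,\norm{\Xb}_F^2/d)$, so $\EE\cZ=n!\,\EE e^{\beta^2\norm{\Xb}_F^2/(2d)}=\exp\{(1+b/2+o(1))n\log n\}$, since $\beta^2/d\to0$. The second is the maximum bound $\log\cZ\le\log n!+\beta\max_\Pi\tr\Pi W$. On the high-probability event $\norm{\Xb}_F^2\le(1+o(1))nd$, a Gaussian maximum bound over $n!$ permutations gives $\EE\max_\Pi\tr\Pi W\le(1+o(1))\sqrt{2n\cdot n\log n}$. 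For $b\ge2$ this yields only $1+\sqrt{2b}$, which is off by the entropy term. To fix it I use the standard Gibbs/Legendre bound: for every $\gamma\le\beta$, $\log\cZ\le\frac{\beta}{\gamma}\log\sum_\Pi e^{\gamma\tr\Pi W}$, by convexity of $\beta\mapsto\log\cZ(\beta)$ together with the value at $0$, or equivalently by Hölder. Taking $\gamma=\sqrt{2\log n}$ (i.e.\ $b'=2$) and applying the annealed bound at $\gamma$ gives $\frac{\beta}{\gamma}(1+1)=\sqrt{b/2}\cdot2=\sqrt{2b}$. For $b<2$ the annealed bound gives $1+b/2$ directly.

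\textbf{Lower bound.} For $b<2$ this is the conditional second moment argument outlined in the proof overview. I truncate on the event $\cE$, which caps $V_S=\tr SW/\sqrt{dk}$ at $\sqrt{(2+\epsilon)(k\vee k_0)\log n}$. I then show three things.
\begin{itemize}
\item $\PP(\cE)\to1$, by a union bound over $|\mathscr P_k|\le n^{2k}$ subpermutations and Gaussian tails of $V_S$ given $\Xb$.
\item $\EE[\cZ\1_\cE]\ge(1-o(1))\EE\cZ$, via the conditional lemma: under the tilt, $\tr\Pi W/\sqrt n\approx\beta\sqrt n$, which lies below $\sqrt{2n\log n}$ precisely when $b<2$.
\item $\EE[\cZ^2\1_\cE]\le e^{o(n\log n)}(\EE\cZ)^2$. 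I decompose pairs $(\Pi,\Pi')$ according to their common submatching $S$ with $|S|=k$. On $\cE$ the extra exponent $2\beta\sqrt{k}V_S$ is at most about $2\sqrt{b(2+\epsilon)}\,k\log n$ for $k\ge k_0$, which is balanced against the entropy cost $k\log k$; for $k<k_0$ the contribution is at most $e^{O(c_0)n\log n}$. Making this bookkeeping hold uniformly in $k$ when $1<b<2$ is the main obstacle. The plan is to handle it by conditioning on the shared part, so that on $\cE$ the effective doubled weight is $b$ times a capped quantity rather than the unrestricted $2b$.
\end{itemize}
Paley--Zygmund then gives $\cZ\ge e^{(1+b/2-o(1))n\log n}$ with probability $e^{-o(n\log n)}$, and the concentration step upgrades this to the bound on $\EE\log\cZ$.

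For $b\ge2$, I instead take the lower bound $\log\cZ\ge\beta\max_\Pi\tr\Pi W$ and use the max-weight assignment. Conditionally on $\Xb$, the entries of $W$ are approximately i.i.d.\ $\cN(0,1)$, and a greedy assignment already achieves $\max_\Pi\tr\Pi W\ge(1-o(1))n\sqrt{2\log n}$. This gives $\sqrt{2b}-o(1)$, and the limit from the $b<2$ case also extends to $b=2$ by monotonicity.
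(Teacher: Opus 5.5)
\textbf{There is a genuine gap: the truncated second-moment bound for $1<b<2$, which carries the whole theorem, is left open, and the bookkeeping you do write down does not close.} Your architecture for $b<2$ is the paper's: concentration and Borel--Cantelli, annealed upper bound, truncation on $\cE$, Paley--Zygmund. The missing estimate is $\EE[\cZ^2\1_{\cE}]\le e^{o(n\log n)}(\EE\cZ)^2$.

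To see the failure, fix $\Pi$ with $k>k_0$ fixed points and set $V=\frac{1}{\sqrt{dk}}\sum_{i\in S(\Pi)}X_i^\top Z_i$, $\lambda=2\sqrt{bk\log n}$, $t=\sqrt{(2+\epsilon)k\log n}$. Suppose you only use the cap, $e^{\lambda V}\le e^{\lambda t}$ on $\cE$. Relative to $(\EE\cZ)^2$, divide by the factor $e^{bk\log n}$ that these $k$ coordinates receive there, and pay the entropy discount $1/k!$. The total contribution of such $\Pi$ then has exponent $(2\sqrt{b(2+\epsilon)}-b-1)k\log n+O(k)$. As $\epsilon\to0$ its coefficient tends to $1-(\sqrt2-\sqrt b)^2$, which is positive for every $b\in(1,2)$, so at $k\asymp n$ you lose a factor $e^{cn\log n}$. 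Indeed, for $b<2+\epsilon$ the cap alone is worse than no truncation, since $\lambda t>\lambda^2/2$.

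The missing idea is that reaching the cap is itself costly:
\begin{enumerate}[(i)]
\item $V$ involves only the rows indexed by $S(\Pi)$, so it is independent of the unshared part.
\item Its moment generating function $(1-s^2/(dk))^{-dk/2}$ gives $\PP[V>a]\le e^{-a^2/2+o(a^2)}$ for $a\le t$.
\item Since $\lambda\ge t$ when $b>1$ (the tilted mean of $V$ lies beyond the cap), peeling over levels $(1+\iota)^\ell\le t$ gives $\EE[e^{\lambda V}\1\{V\le t\}]\le e^{\lambda t-t^2/2+C\iota n\log n}$.
\item The unshared part contributes at most $e^{b(n-k)\log n+o(n\log n)}$, because the quadratic-form MGF over the derangement of $S(\Pi)^c$ is maximized by $2$-cycles.
\end{enumerate}
Together these give exponent $(\gamma_{b,\epsilon}-1)k\log n+O(k)+C\iota n\log n$, with $\gamma_{b,\epsilon}=-b+2\sqrt{b(2+\epsilon)}-\frac{2+\epsilon}{2}$ and $\gamma_{b,0}-1=-(\sqrt2-\sqrt b)^2<0$. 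This is exactly where $b<2$ enters. It is the Gaussian factor $e^{-t^2/2}$, not the cap by itself, that rescues the second moment.

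\textbf{Your first bullet contains a related slip that matters.} A union bound with $\abs{\mathscr P_k}\le n^{2k}$ does not give $\PP(\cE)\to1$ at the cap $\sqrt{(2+\epsilon)k\log n}$; it only works at $\sqrt{(4+\epsilon)k\log n}$. With that larger cap, the corrected computation above gives coefficient $4\sqrt b-b-3=(\sqrt b-1)(3-\sqrt b)>0$ (as $\epsilon\to0$) instead of $\gamma_{b,0}-1$. You need the exact count $\abs{\mathscr P_k}=\binom{n}{k}^2k!$, so that $\log\abs{\mathscr P_k}=k\log n+O(n)$ for $k\ge c_0n$. The $1/k!$ is what lets the truncation sit at the typical maximum.

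\textbf{The remaining steps are sound, and some differ from the paper.}
\begin{enumerate}[(i)]
\item Your direct bound $\EE\log\cZ\le\log\EE\cZ$ for $b<2$ is simpler than the paper's detour through $\cE$ and Cauchy--Schwarz.
\item Your inequality $\log\cZ(\beta)\le\frac{\beta}{\gamma}\log\cZ(\gamma)$ is the paper's fractional-moment step. It follows from $(\sum_\Pi a_\Pi)^\theta\le\sum_\Pi a_\Pi^\theta$ with $\theta=\gamma/\beta\le1$, i.e.\ nonnegativity of the Gibbs entropy. It does not follow from convexity in $\beta$, which only bounds $\log\cZ(\beta)$ from below.
\item Your greedy max-weight lower bound for $b\ge2$ is a genuinely different, valid route. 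The paper derives the supercritical lower bound from convexity of $s\mapsto\frac{1}{n\log n}\log\cZ_n(s)$ and the subcritical limits at rational $r<\sqrt2$, so that branch depends on the second-moment argument. Yours is self-contained once ``approximately i.i.d.'' is made precise: given $\Xb$, the columns of $W$ are i.i.d.\ $\cN(0,\Xb\Xb^\top/d)$ with $\Xb\Xb^\top/d=I+o(1)$ entrywise with high probability. A Sudakov--Fernique comparison at each greedy step then gives $\EE\max_\Pi\tr\Pi W\ge(1-o(1))n\sqrt{2\log n}$.
\end{enumerate}
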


The starting point of the proof is to establish that the normalized free energy concentrates on its expectation. The proof follows by straightforward modifications of \eqref{eq:GM_UpperBound_GaussianConcentration}.

\begin{lemma}\label{lemma:GM_noiseFE_concentration}
    There exist constants $C,c > 0$ such that for every fixed $t > 0$ and all sufficiently large $n$,
    \begin{equation}
        \PP\insquare{\abs{\frac{1}{n\log n}\log \cZ - \frac{1}{n\log n} \EE \log \cZ} \geq t} \leq C\exp\inbraces{-c \frac{n\log n}{b}t^2}.
    \end{equation}
\end{lemma}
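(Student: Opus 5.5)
We follow the same two-stage argument used for \eqref{eq:GM_UpperBound_GaussianConcentration_residual}: first condition on $\Xb$ and use Gaussian concentration in $\Zb$, then show that the conditional mean concentrates as a function of $\Xb$. Write $F_0(\Xb,\Zb)=\log\cZ$. This is the same as $F$ with the signal term $\sigma^{-2}\tr\Pi\Xb\Xb^\top$ dropped, since $\sqrt{b\log n}/\sqrt d = 1/\sigma$. For fixed $\Xb$, the map $\Zb\mapsto F_0$ is a log-sum-exp of linear functionals $\Zb\mapsto\sigma^{-1}\tr\Pi\Xb\Zb^\top$. Each of these is $\norm{\Pi\Xb}_F/\sigma=\norm{\Xb}_F/\sigma$-Lipschitz, so $F_0(\Xb,\cdot)$ is $\norm{\Xb}_F/\sigma$-Lipschitz. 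On $B_R=\{\norm{\Xb}_F\le R\}$ with $R=2\sqrt{nd}$, Gaussian concentration gives
\[
\PP[\abs{F_0-G_0(\Xb)}\ge s]\le 2\exp(-cs^2\sigma^2/R^2)+\exp(-cnd),
\]
where $G_0(\Xb)=\EE_{\Zb}F_0$. Symmetrically, $\Xb\mapsto F_0$ is $\norm{\Zb}_F/\sigma$-Lipschitz for fixed $\Zb$. Hence $G_0$ is Lipschitz on all of $\RR^{n\times d}$ with constant $\EE\norm{\Zb}_F/\sigma\le\sqrt{nd}/\sigma$. This is simpler than before: no quadratic term appears, so the McShane extension $\overline G$ and the truncation error estimate are unnecessary. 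Gaussian concentration then gives $\PP[\abs{G_0-\EE G_0}\ge s]\le 2\exp(-cs^2\sigma^2/(nd))$ directly.

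Combine the two bounds with $s=tn\log n/2$. Since $R^2/\sigma^2=4nb\log n$ and $nd/\sigma^2=nb\log n$,
\[
\PP\bigl[\abs{\log\cZ-\EE\log\cZ}\ge tn\log n\bigr]\le C\exp\Bigl(-c\frac{n\log n}{b}t^2\Bigr)+C\exp(-cnd).
\]
The last step is to absorb the residual $\exp(-cnd)$, which comes only from $\PP[B_R^c]$. Because $d=\omega(\log n)$ and $t$ is fixed, $nd\ge (t^2/b)\,n\log n$ for all large $n$. The residual is therefore dominated by the first term after adjusting constants, which gives the stated bound for every fixed $t$ and all sufficiently large $n$.

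The main point needing care is the conditional-on-$\Xb$ Lipschitz constant, which is random. This is exactly why the event $B_R$ and its chi-squared tail \eqref{eq:GM_Xnorm_tail} are needed. Integrability of $\log\cZ$, which justifies $\EE$ and Fubini, is immediate from $\abs{\log\cZ}\le\log n!+\sigma^{-1}\norm{\Xb}_F\norm{\Zb}_F$.
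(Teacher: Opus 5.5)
Your proof is correct and is the same two-stage argument (Gaussian concentration in $\Zb$ given $\Xb$ on $B_R$, then concentration of the conditional mean in $\Xb$, then absorbing $e^{-cnd}$ using $d=\omega(\log n)$) that the paper invokes as a ``straightforward modification'' of the proof of \eqref{eq:GM_UpperBound_GaussianConcentration_residual}. Your remark that $G_0(\Xb)=\EE_{\Zb}\log\cZ$ is globally $\sqrt{nd}/\sigma$-Lipschitz is valid, so the extension $\overline{G}$ and its truncation-error estimate can indeed be dropped in this noise-only setting.
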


By Lemma \ref{lemma:GM_noiseFE_concentration} and Borel-Cantelli, we see that to establish Theorem \ref{thm:GM_noiseFE_asymptotics}, it suffices to show that the deterministic sequence $\frac{1}{n\log n} \EE \log \cZ$ has the desired limit.
This will follow from a conditional second moment method. The high probability event is given in Appendix \ref{sec:GM_highProbEvent}. The sections afterwards make first and second moment calculations which will be used to prove Theorem \ref{thm:GM_noiseFE_asymptotics}.
The conditional second moment argument establishes the subcritical branch. The critical and supercritical branch is then deduced from it by convexity and a fractional-moment bound.

\begin{proof}[Proof of Proposition \ref{prop:GM_full_free_energy_subcritical}]
    \underline{\textbf{Lower bound.}}
    Fix $\epsilon_1 > 0$. By Theorem \ref{thm:GM_noiseFE_asymptotics},
    \[
        \PP\insquare{\frac{1}{n\log n}\log \cZ \geq \frac{b}{2} + 1 - \epsilon_1} = 1 - o(1).
    \]
    Combining this with Lemma \ref{lemma:GM_uniformSignalLowerBound}, applied with $\eta = \epsilon_1$, gives
    \[
        \PP\insquare{\widetilde{\Psi}_{\sigma} \geq \frac{b}{2} + 1 - 2\epsilon_1} = 1 - o(1).
    \]
    On the other hand, \eqref{eq:GM_UpperBound_GaussianConcentration} gives
    \[
        \PP\insquare{\widetilde{\Psi}_{\sigma} \leq \EE\widetilde{\Psi}_{\sigma} + \epsilon_1} = 1 - o(1).
    \]
    For all sufficiently large $n$, the last two events have nonempty intersection. On this intersection,
    \[
        \EE\widetilde{\Psi}_{\sigma} \geq \frac{b}{2} + 1 - 3\epsilon_1.
    \]
    Taking $\liminf_{n\to\infty}$ and then letting $\epsilon_1 \downarrow 0$ gives
    \[
        \liminf_{n\to\infty}\EE\widetilde{\Psi}_{\sigma}
        \geq
        1+\frac b2.
    \]

    \underline{\textbf{Upper bound.}}
    For $0\leq k\leq n$, let
    \[
        \widetilde Z_k
        =
        \sum_{\Pi:\,\tr\Pi=k}
        \exp\left\{
            \frac{1}{\sigma^2}\tr\Pi\Xb\Xb^\top
            +
            \frac{1}{\sigma}\tr\Pi\Xb\Zb^\top
        \right\},
        \qquad\text{and}\qquad
        \Psi_{\sigma,k}
        =
        \frac{1}{n\log n}\log\widetilde Z_k.
    \]
    The empty sum is taken to be zero, so in particular $\Psi_{\sigma,n-1} = -\infty$.
    Then
    \[
        \widetilde{\Psi}_{\sigma}
        \leq
        \max_{0\leq k\leq n}\Psi_{\sigma,k}
        +
        \frac{\log(n+1)}{n\log n}.
    \]
    Fix $\eta\in(0,1)$. For $k\geq \eta n$, Proposition \ref{prop:GM_Psi_UB} gives uniformly
    \[
        \EE\Psi_{\sigma,k}
        \leq
        1+\frac b2+\frac{k}{2n}(b-2)+o_n(1)
        \leq
        1+\frac b2-\frac{2-b}{2}\eta+o_n(1).
    \]
    For $0\leq k\leq\eta n$, applying the estimates behind \eqref{eq:geomMatch_overlapError_diagTerm} and \eqref{eq:geomMatch_overlapError_derangementPart} with $\delta=k/n$ gives, uniformly over this range,
    \[
        \EE\Psi_{\sigma,k}
        \leq
        \frac{bk}{n}
        +
        \left(1+\frac b2\right)\left(1-\frac{k}{n}\right)
        +o_n(1)
        =
        1+\frac b2-\frac{2-b}{2}\frac{k}{n}+o_n(1).
    \]
    Since $b<2$, it follows that $\sup_{0\leq k\leq\eta n}\EE\Psi_{\sigma,k}\leq 1+b/2+o_n(1)$.
    Finally, the concentration argument used for \eqref{eq:GM_UpperBound_GaussianConcentration} applies to each restricted free energy $\Psi_{\sigma,k}$ with the same constants, uniformly in $k$. A union bound over the $n+1$ possible values of $k$ gives
    \[
        \EE\max_{0\leq k\leq n}
        \left(\Psi_{\sigma,k}-\EE\Psi_{\sigma,k}\right)_+
        =
        o(1).
    \]
    Therefore
    $
        \limsup_{n\to\infty}\EE\widetilde{\Psi}_{\sigma}
        \leq
        1+\frac b2
    $.
    Combining the upper and lower bounds proves the claim.
\end{proof}
\subsubsection{High probability event}
\label{sec:GM_highProbEvent}

Let $\mathscr{P}_k$ denote the set of $k$-subpermutation matrices: any $S \in \mathscr{P}_k$ is a $n \times n$ matrix with exactly $k$ rows and $k$ columns containing one nonzero entry (which is one) and all the rest are zeros. Notice that $\mathscr{P}_n$ coincides with the usual set of $n \times n$ permutation matrices.

For any $\epsilon, c_0 > 0$, and $k_0 = c_0 n$, define the event
\begin{align}\label{eq:GM_highProbEvent_def}
    \cE = \cE(\epsilon,c_0) = \bigcap_{k=1}^{n} \cE_k, \quad\text{where}\quad \cE_k = \begin{cases}
    \displaystyle
    \max_{S \in \mathscr{P}_k} \frac{1}{\sqrt{dk}} \tr S \Xb \Zb^\top \leq \sqrt{(2+\epsilon)k_0 \log n} & \text{if } k \leq k_0 \\
    \displaystyle \max_{S \in \mathscr{P}_k} \frac{1}{\sqrt{dk}} \tr S \Xb \Zb^\top \leq \sqrt{(2+\epsilon)k \log n} & \text{if } k > k_0
\end{cases}
\end{align}

\begin{lemma}\label{lemma:GM_highProbEvent}
For any fixed $\epsilon,c_0>0$, the event $\cE=\cE(\epsilon,c_0)$ holds with high probability, i.e.~$\PP \cE = 1 - o(1)$.
\end{lemma}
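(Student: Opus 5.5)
The plan is to condition on $\Xb$, use Gaussian tails for $\tr S\Xb\Zb^\top$ given $\Xb$, and take a union bound over all $S\in\mathscr{P}_k$ and all $k$. The point to check is that the entropy of $\mathscr{P}_k$ is beaten by the Gaussian tail at the thresholds in \eqref{eq:GM_highProbEvent_def}.

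\textbf{Step 1 (regularize $\Xb$).} For a fixed small $\eta>0$, let $\cB=\{\max_{i\in[n]}\norm{X_i}_2^2\le (1+\eta)d\}$. By a chi-squared tail bound \cite[Lemma 1]{laurent2000adaptive} and a union bound over $n$ rows, $\PP[\cB^c]\le n e^{-c_\eta d}=o(1)$, since $d=\omega(\log n)$.

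Write $S\in\mathscr{P}_k$ as a set of $k$ pairs $(i,j)$ with distinct $i$'s and distinct $j$'s. Then $\tr S\Xb\Zb^\top=\sum_{(i,j)\in S}X_j^\top Z_i$. Conditionally on $\Xb$ this is centered Gaussian, because the $Z_i$ involved are distinct independent rows. Its variance is $\sum_{(i,j)\in S}\norm{X_j}_2^2$, which on $\cB$ is at most $(1+\eta)kd$. Hence, on $\cB$, for any threshold $t_k$,
\[
\PP\Big[\tfrac{1}{\sqrt{dk}}\tr S\Xb\Zb^\top> t_k\,\Big|\,\Xb\Big]\le \exp\Big\{-\tfrac{t_k^2}{2(1+\eta)}\Big\}.
\]

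\textbf{Step 2 (count subpermutations).} We have $|\mathscr{P}_k|=\binom nk^2 k!\le \binom nk^2\, n^k$.

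\emph{Case $k>k_0$.} Use $\binom nk\le 2^n$, so $|\mathscr{P}_k|\le \exp\{k\log n+2n\log 2\}$. With $t_k^2=(2+\epsilon)k\log n$, the union bound over $\mathscr{P}_k$ gives a probability at most
\[
\exp\Big\{k\log n\Big(1-\tfrac{1+\epsilon/2}{1+\eta}\Big)+O(n)\Big\}.
\]
Choose $\eta$ small so that $\tfrac{1+\epsilon/2}{1+\eta}\ge 1+\epsilon/4$. Since $k\ge c_0 n$, the $O(n)$ term is $o(k\log n)$, and the bound becomes $\exp\{-(\epsilon/8)c_0n\log n\}$.

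\emph{Case $k\le k_0$.} Use $k\log(en/k)\le k_0\log(e/c_0)$, which holds because $k\mapsto k\log(en/k)$ is increasing on $[1,n]$. This gives $|\mathscr{P}_k|\le \exp\{k_0\log n+O_{c_0}(n)\}$. With $t_k^2=(2+\epsilon)k_0\log n$, the same computation with $k$ replaced by $k_0$ in both the tail and the entropy gives a bound of $\exp\{-(\epsilon/8)c_0 n\log n\}$.

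\textbf{Step 3 (sum over $k$).} Summing over the at most $n$ values of $k$ gives
\[
\PP[\cE^c]\le \PP[\cB^c]+n\,e^{-(\epsilon/8)c_0 n\log n}=o(1).
\]

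\textbf{Main obstacle.} The only delicate point is Step 2 in the regime $k\le k_0$. There the natural threshold $\sqrt{(2+\epsilon)k\log n}$ would fail: for $k=o(n)$ the count $\binom nk^2$ contributes an extra $2k\log(n/k)$ in the exponent, which is comparable to $k\log n$. This is exactly why the event uses the floor $k_0=c_0 n$. With $k_0\log n$ in the threshold, the $O_{c_0}(n)$ entropy loss becomes negligible against the $\epsilon k_0\log n$ gain. One also has to keep $\eta$ small relative to $\epsilon$, so that the $(1+\eta)$ variance inflation from Step 1 does not eat that gain.
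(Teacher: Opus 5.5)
Your proof is correct, but it takes a more elementary route than the paper. Both arguments weigh the entropy of $\mathscr{P}_k$ against a Gaussian tail, and both split at $k_0$ in the same way.

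The paper works with $M_k=\max_{S\in\mathscr{P}_k}\frac{1}{\sqrt{dk}}\tr S\Xb\Zb^\top$ in two stages:
\begin{enumerate}
\item It bounds $\EE M_k\le(1+o(1))\sqrt{2\log\abs{\mathscr{P}_k}}$ by the soft-max inequality. This uses the \emph{unconditional} MGF $(1-c^2/(dk))^{-dk/2}$, so no regularization of $\Xb$ is needed.
\item It proves the concentration \eqref{eq:UncondHighProb_Mk_concentration} of $M_k$ about its mean. This is a two-step Gaussian Lipschitz argument: condition on $\Zb$, and control the Lipschitz constant through $\max_i\norm{Z_i}_2^2/d$.
\end{enumerate}

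You instead condition on $\Xb$ and pay for the regularization event $\{\max_i\norm{X_i}_2^2\le(1+\eta)d\}$. You then apply the conditional Gaussian tail and a union bound over $\mathscr{P}_k$ directly. This removes the concentration-of-the-maximum step entirely and gives an explicit failure probability. The paper's route also produces the standalone expectation bound \eqref{eq:UncondHighProb_Emax_bound}, which it reuses in the proof of Proposition \ref{prop:GM_ElogZ_UB} to bound $\EE\log\cZ$. For the lemma itself, your argument is shorter.

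One cosmetic slip: your inequality $k\log(en/k)\le k_0\log(e/c_0)$ holds only when $k_0\le n$, i.e.\ $c_0\le 1$. This is harmless, because the crude bound $\binom nk^2\le 4^n$ that you already use for $k>k_0$ gives $\log\abs{\mathscr{P}_k}\le k\log n+2n\log 2\le k_0\log n+2n\log 2$ for every $k\le k_0$. So the monotonicity step can be dropped altogether.
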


\begin{proof}[Proof of Lemma \ref{lemma:GM_highProbEvent}]
    For any $k$, for any $c > 0$ such that $c/\sqrt{dk} < 1$, we have
    \begin{align*}
        \EE \max_{S \in \mathscr{P}_k} \frac{1}{\sqrt{dk}} \tr S \Xb \Zb^\top &\leq \frac{1}{c} \log \sum_{S \in \mathscr{P}_k} \EE \exp\inbraces{ \frac{c}{\sqrt{dk}} \tr S \Xb \Zb^\top } \\
        &= \frac{1}{c} \log \abs{\mathscr{P}_k} - \frac{dk}{2c} \log \inparen{1 - \frac{c^2}{dk}}.
    \end{align*}
    We have $\abs{\mathscr{P}_k} = \binom{n}{k}\binom{n}{k} k! \leq n^{2k} (e/k)^k$. Then choosing $c = \sqrt{2 \log \abs{\mathscr{P}_k}} \ll \sqrt{dk}$, we obtain after Taylor expanding the $\log$ in the second term that
    \begin{equation}\label{eq:UncondHighProb_Emax_bound}
    \EE \max_{S \in \mathscr{P}_k} \frac{1}{\sqrt{dk}} \tr S \Xb \Zb^\top \leq (1+o(1)) \sqrt{2 \log \abs{\mathscr{P}_k}}
    \end{equation}
    The concentration step below is the same high-probability Lipschitz argument used in the proof of \eqref{eq:GM_UpperBound_GaussianConcentration}; we spell it out in this simpler max setting. Define
    \[
        M_k(\Xb,\Zb)=\max_{S\in\mathscr{P}_k}\frac{1}{\sqrt{dk}}\tr S\Xb\Zb^\top .
    \]
    We claim that
    \begin{equation}\label{eq:UncondHighProb_Mk_concentration}
        \PP\insquare{\abs{M_k-\EE M_k}\geq t}
        \leq C\exp(-ct^2)+C'\exp(-c'd)
    \end{equation}
    for some universal constants $C,C',c,c'>0$ and every $t>0$. To see this, note that conditionally on $\Zb$, the function $\Xb\mapsto M_k(\Xb,\Zb)$ is $L_{\Zb}$-Lipschitz, where
    \[
        L_{\Zb}
        =
        \max_{S\in\mathscr{P}_k}\frac{\norm{S^\top\Zb}_F}{\sqrt{dk}}.
    \]
    Since $S^\top\Zb$ selects $k$ rows of $\Zb$, we have
    $
        L_{\Zb}^2\leq d^{-1}\max_{i\leq n}\norm{Z_i}_2^2.
    $
    Since $\norm{Z_i}_2^2\sim\chi_d^2$, a union bound gives $\PP\insquare{L_{\Zb}^2>2}\leq \exp(-c'd)$. Thus, by Gaussian concentration conditionally on $\Zb$,
    \[
        \PP\insquare{\abs{M_k-\EE[M_k\mid\Zb]}\geq t}
        \leq 2e^{-t^2/4}+\exp(-c'd).
    \]
    On the other hand, the function $\EE[M_k\mid\Zb]$ is $L_*$-Lipschitz, where
    \[
        L_*
        =
        \EE\insquare{\max_{S\in\mathscr{P}_k}\frac{\norm{S\Xb}_F}{\sqrt{dk}}}
        \leq
        \EE\sqrt{\frac{1}{d}\max_i\norm{X_i}_2^2}
        \leq C_0
    \]
    for a universal constant $C_0>0$. Hence
    \[
        \PP\insquare{\abs{\EE[M_k\mid\Zb]-\EE M_k}\geq t}
        \leq 2\exp(-t^2/(2C_0^2)).
    \]
    The claim then follows from the triangle inequality.

    For $k > k_0$, we have
    \begin{align*}
        \PP[\cE_k^c] &= \PP\insquare{ M_k - \EE M_k > \sqrt{(2+\epsilon)k \log n} - \EE M_k  } \\
        &\leq \PP\insquare{ M_k - \EE M_k > \inparen{\sqrt{2+\epsilon} - \sqrt{2 + \epsilon/2}}\sqrt{k \log n}  },
    \end{align*}
    where the inequality uses \eqref{eq:UncondHighProb_Emax_bound} and   that $\lim_{n \to \infty} \frac{\log \abs{\mathscr{P}_k}}{k \log n} = 1$ . Indeed, since $\abs{\mathscr{P}_k}=\binom{n}{k}^2k!$, Stirling's formula gives $\log\abs{\mathscr{P}_k}=k\log n+O(n)$ uniformly for $k>k_0=c_0n$. By \eqref{eq:UncondHighProb_Mk_concentration}, for $k>k_0$,
    \begin{align}\label{eq:UncondHighProb_Ek_complement_k_bigger_k_0}
        \PP\insquare{\cE_k^c} \leq C\exp\inparen{-c(\epsilon) k \log n}+C'\exp(-c'd)
    \end{align}

    For $k \leq k_0$, since $\log \abs{\mathscr{P}_k} \leq 2k \log n + k - k \log k =: f(k)$ where $f(k)$ is increasing for $k \leq k_0$, we have $f(k) \leq 2k_0 \log n + k_0 - k_0 \log k_0 = k_0 \log n + k_0 (1-\log c_0) \leq \inparen{1 + \frac{\epsilon}{8}}k_0 \log n$ for sufficiently large $n$. It follows that for sufficiently large $n$, from \eqref{eq:UncondHighProb_Emax_bound},
    \[
    \EE \max_{S \in \mathscr{P}_k} \frac{1}{\sqrt{dk}} \tr S \Xb \Zb^\top \leq \sqrt{ \inparen{2 + \frac{\epsilon}{2}} k_0 \log n }.
    \]
    By \eqref{eq:UncondHighProb_Mk_concentration}, we obtain 
    \begin{align}
        \PP[\cE_k^c] &\leq \PP\insquare{ M_k - \EE M_k > \inparen{\sqrt{2+\epsilon} - \sqrt{2 + \epsilon/2}}\sqrt{k_0 \log n}  } \nonumber \\
        &\leq C\exp\inparen{-c(\epsilon)k_0 \log n}+C'\exp(-c'd) \label{eq:UncondHighProb_Ek_complement_k_smaller_k_0}
    \end{align}
    A union bound combined with \eqref{eq:UncondHighProb_Ek_complement_k_bigger_k_0} and \eqref{eq:UncondHighProb_Ek_complement_k_smaller_k_0} gives, as $n\to \infty$,
    \[
    \PP \cE^c \leq C\sum_{k \leq k_0} e^{-c(\epsilon) k_0 \log n} + C\sum_{k > k_0} e^{-c(\epsilon) k \log n} + Cn e^{-c'd} \to 0. \qedhere
    \]
\end{proof}

\begin{lemma}\label{lemma:GM_highProbEvent_conditional_corrected}
Let $W = \frac{1}{\sqrt{d}}\Xb\Zb^\top$. For any fixed $\epsilon,c_0>0$ and any fixed permutation matrix $\Pi_0$, with $\cE=\cE(\epsilon,c_0)$,
\[
    \PP\insquare{\cE^c \,\bigg|\, \tr \Pi_0 W}
    \1\left\{\abs{\frac{1}{\sqrt n}\tr \Pi_0 W}\leq \sqrt{2n\log n}\right\}
    = o(1).
\]
\end{lemma}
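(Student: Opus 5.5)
The plan is to decompose the Gaussian structure given the linear statistic $T := \tr \Pi_0 W$ and show that conditioning on a value of $T$ that is not atypically large leaves the maxima $M_k$ essentially unchanged. By relabeling (replacing $\Zb$ by $\Pi_0^\top\Zb$, which preserves the law and maps $\mathscr{P}_k$ to itself), we may assume $\Pi_0 = I$. Then $T = \frac{1}{\sqrt d}\sum_{i=1}^n X_i^\top Z_i$. We first condition on $\Xb$. Given $\Xb$, the variables $\tr S W = \frac{1}{\sqrt d}\langle S^\top \Xb, \Zb\rangle$ and $T = \frac{1}{\sqrt d}\langle \Xb,\Zb\rangle$ are jointly Gaussian in $\Zb$. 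So we write $\Zb = \frac{\langle \Xb,\Zb\rangle}{\norm{\Xb}_F^2}\Xb + \Zb^\perp$, where $\Zb^\perp$ is independent of $T$ given $\Xb$. This gives
\[
\frac{1}{\sqrt{dk}}\tr S\Xb\Zb^\top = \frac{1}{\sqrt{dk}}\tr S\Xb(\Zb^{\perp})^\top + \frac{\sqrt d\, T}{\norm{\Xb}_F^2}\cdot\frac{\tr S\Xb\Xb^\top}{\sqrt{dk}} .
\]

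\textbf{Step 1 (the perturbation term).} On the event $|T|\le \sqrt2\, n\sqrt{\log n}$ we bound the second term uniformly over $S\in\mathscr{P}_k$. We have $\norm{\Xb}_F^2 \ge nd/2$ with probability $1-e^{-cnd}$; this depends on $\Xb$ only, which is independent of $T$ up to the conditioning we handle next. Also $\tr S\Xb\Xb^\top \le \sum_{i\in \text{rows}(S)}\norm{X_i}\norm{X_{S(i)}} \le k\max_i\norm{X_i}^2 \le 2kd$ with probability $1-ne^{-cd}$. Hence the perturbation is at most
\[
\frac{\sqrt d\,\sqrt2\, n\sqrt{\log n}}{nd/2}\cdot\frac{2kd}{\sqrt{dk}} = O\big(\sqrt{k\log n}\big).
\]
This is unfortunately of the same order as the threshold, so the crude bound is not enough. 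The refinement is to use the off-diagonal structure: $\tr S\Xb\Xb^\top$ is $\sum_{\text{fixed pts}}\norm{X_i}^2 + \sum_{\text{others}} X_i^\top X_j$, and the second sum is $O(\sqrt{d}\cdot\sqrt{k\log n})$ uniformly by a union bound, hence negligible. The diagonal part contributes $(k_1 d)\cdot O(\sqrt{\log n}/\sqrt{dk}) = O(k_1\sqrt{\log n/(dk)}) $ times $\sqrt d$, which equals $O(k_1\sqrt{\log n}/\sqrt k)$, where $k_1\le k$ is the number of fixed points. This is still of order $\sqrt{k\log n}$ when $k_1\asymp k$. So the real mechanism must be that the conditional shift is $T/\norm{\Xb}_F^2\cdot\sqrt d \approx \sqrt{2\log n}/\sqrt{d}\cdot \sqrt{\cdot}$. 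I will recompute carefully: the term equals $\frac{T}{\sqrt n}\cdot \frac{\sqrt{n d}}{\norm{\Xb}_F^2}\cdot\frac{k_1 d}{\sqrt{k}}\approx \sqrt{2\log n}\,k_1/\sqrt{nk}\le \sqrt{2k\log n}\cdot\sqrt{k/n}$. For $k\le k_0=c_0 n$ this is at most $\sqrt{c_0}\sqrt{2k_0\log n}$, and for $k>k_0$ it is at most $\sqrt{2k\log n}$. So the threshold $\sqrt{(2+\epsilon)}$ must absorb a shift.

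\textbf{Step 2 (the main obstacle).} The shift and the fluctuation of the $\Zb^\perp$ part do not simply add at their maxima, because the maximizing $S$ for the $\Zb^\perp$ part will typically have few fixed points, so both terms cannot be simultaneously near $\sqrt{2k\log n}$. The key estimate is therefore a joint bound. We split by the number $k_1$ of fixed points of $S$. For a given $k_1$, the count of such $S$ is at most $\binom{n}{k_1}|\mathscr{P}_{k-k_1}|\approx \exp\{(k-k_1)\log n+O(n)\}$. The $\Zb^\perp$ maximum over this class is therefore at most $\sqrt{(2+o(1))(k-k_1)\log n}$, by the argument of Lemma \ref{lemma:GM_highProbEvent} with $\Zb^\perp$ in place of $\Zb$ (its Lipschitz and conditional-variance bounds are no worse). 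The sum of the two pieces is then
\[
\sqrt{2\log n}\Big(\sqrt{k-k_1} + k_1\sqrt{1/(nk)}\,\Big),
\]
up to $1+o(1)$ factors. By concavity, this is maximized with value at most $\sqrt{2k\log n}\,\sqrt{1+k/n}$-type quantities. The factor $\epsilon$ must be handled, possibly by choosing the split more finely, and I expect this optimization to be the crux, requiring that the extra gain is at most $(1+\epsilon/4)$. If the bound fails near $k=n$, the fallback is to use the exact count $|\mathscr{P}_{k}|$ restricted by $k_1$, which yields $\log\#\approx (k-k_1)\log n + k_1\log(n/k_1)$.

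\textbf{Step 3 (concentration).} The concentration of the $\Zb^\perp$-maxima given $(\Xb,T)$ follows from Gaussian concentration exactly as in \eqref{eq:UncondHighProb_Mk_concentration}. We then union bound over $k$ and $k_1$, noting there are $O(n^2)$ pairs and each has failure probability $e^{-c\log n\cdot k_0}+e^{-c'd}$. This yields a conditional failure probability of $o(1)$ uniformly over the allowed $T$, after integrating out $\Xb$ on its good event.
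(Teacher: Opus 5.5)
Your proposal has a genuine gap in Step~2, and the Step~1 arithmetic error conceals it. Your ``careful recomputation'' of the shift is off by a factor $\sqrt n$. Let $k_1$ be the number of fixed points of $S$. The diagonal part of the conditional shift is
\[
\frac{\sqrt d\,T}{\norm{\Xb}_F^2}\cdot\frac{1}{\sqrt{dk}}\sum_{i\ \mathrm{fixed}}\norm{X_i}_2^2\approx\frac{T}{n}\cdot\frac{k_1}{\sqrt k}.
\]
For $\abs{T}$ near $n\sqrt{2\log n}$ this is $\sqrt{2\log n}\,k_1/\sqrt k$, not $\sqrt{2\log n}\,k_1/\sqrt{nk}$. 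The reason is that conditioning on $\tr W=t$ gives each $W_{ii}$ conditional mean about $t/n\approx\sqrt{2\log n}$. Your first estimate, of order $\sqrt{k\log n}$ when $k_1\asymp k$, was the right one.

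With the correct shift, your Step~2 bound becomes $\sqrt{2k\log n}\,(\sqrt{1-x}+x)$ with $x=k_1/k$. At $x=3/4$ this equals $\tfrac54\sqrt{2k\log n}$. Since $2\cdot(5/4)^2=25/8>2+\epsilon$ for small $\epsilon$, the argument does not establish $\cE_k$. Your fallback of refining the count cannot repair this, because the loss is in the variance, not the entropy. A single union bound over the whole residual process $\frac{1}{\sqrt{dk}}\tr S\Xb(\Zb^\perp)^\top$, whose members have conditional variance close to $1$, can only give $\sqrt{2(k-k_1)\log n}$. Its maximum is in fact at most $(1+o(1))\frac{k-k_1}{\sqrt k}\sqrt{2\log n}+O(\sqrt n)$.

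The missing idea is to split the residual into its fixed-point and derangement components and bound their maxima separately. This is what the paper does; it conditions on $\Zb$ rather than $\Xb$, which is immaterial by symmetry.
\begin{itemize}
\item The fixed-point component has normalized conditional variance at most $(1+o(1))k_1/k$. It is indexed only by the set of fixed points, so by at most $2^n$ objects, and its maximum is $O(\sqrt n)$. This is negligible against $\sqrt{(k\vee k_0)\log n}\geq\sqrt{c_0n\log n}$.
\item The derangement component has normalized conditional variance only $(1+o(1))(k-k_1)/k$ and at most $4^n n^{k-k_1}$ indices. Its maximum is therefore at most $(1+o(1))\frac{k-k_1}{\sqrt k}\sqrt{2\log n}+O(\sqrt n)$.
\end{itemize}
Adding the shift $\frac{k_1}{\sqrt k}\sqrt{2\log n}$ gives $(1+o(1))\sqrt{2k\log n}+O(\sqrt n)$, which fits under $\sqrt{(2+\epsilon)(k\vee k_0)\log n}$. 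The off-diagonal part of the shift is negligible: a per-pair bound $\abs{X_i^\top X_j}\leq\rho_n d$ with $\rho_n=\sqrt{K\log n/d}\to0$ makes it $o(1)\frac{k-k_1}{\sqrt k}\sqrt{2\log n}$. Your uniform order $O(\sqrt d\sqrt{k\log n})$ for that sum is not right, though the conclusion survives.

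There is also a secondary gap: you never supply the promised handling of the conditioning. You need three regularity properties of $\Xb$:
\begin{itemize}
\item $\norm{\Xb}_F^2=(1+o(1))nd$;
\item $\max_i\norm{X_i}_2^2\leq(1+o(1))d$;
\item the off-diagonal bound $\max_{i\neq j}\abs{X_i^\top X_j}\leq\rho_n d$.
\end{itemize}
These must hold under the law of $\Xb$ given $T=t$, uniformly over $\abs{t}\leq n\sqrt{2\log n}$, and conditioning on $T$ does tilt $\Xb$. The paper gets this by observing that $T$, given the conditioning matrix, depends only on its Frobenius radius. The direction therefore stays uniform, and the tilted radius still concentrates because $t^2/n\leq 2n\log n\ll nd$.
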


\begin{proof}
Without loss of generality, assume $\Pi_0=I$. Let $T=\tr W$. Write $t$ for a deterministic value of $T$ and condition on $T=t$. To prove the claim, it is enough to show that uniformly over $\abs{t}\leq n\sqrt{2\log n}$,
\[
    \PP\insquare{\bigcup_{k=1}^{n}\cE_k^c \,\bigg|\, T=t}=o(1).
\]
We first show that the norm and inner products of the rows of $
\Zb$ still concentrate, even with the conditioning on $\tr W$. Fix a sufficiently large constant $K>0$, to be chosen below, and set
\[
    \rho_n = \sqrt{\frac{K\log n}{d}}.
\]
Note that $\rho_n\downarrow0$ because $d=\omega(\log n)$. Define
\[
    \cG_n =
    \inbraces{
    \max_i \abs{\frac{\norm{Z_i}_2^2}{d}-1}\leq \rho_n}
    \cap
    \inbraces{\max_{i\neq j}\frac{\abs{Z_i^\top Z_j}}{d}\leq \rho_n
    }.
\]
We claim that
\begin{equation}\label{eq:GM_conditional_Z_regular}
    \sup_{\abs{t}\leq n\sqrt{2\log n}}
    \PP\insquare{\cG_n^c \,\big|\,T=t}=o(n^{-3}).
\end{equation}
This is seen as follows. Write $N=nd$, $R=\norm{\Zb}_F^2$, and $U=\Zb/\norm{\Zb}_F$ (so $\Zb = \sqrt{R} U$). By rotational invariance, $U$ is uniform on the sphere in $\RR^N$ and is independent of $R$. Moreover, $\tr W$ has the conditional distribution
\[
    \inparen{T \,\big|\, \Zb} \sim \cN\inparen{0,\frac{R}{d}}.
\]
Notice that this distribution depends on $\Zb$ only through $R$. Therefore the conditional density of $\tr W$ at $t$, given $(R,U)$, is the same as the conditional density given $R$, namely
\[
    f_{T | R}(t \mid r) = \sqrt{\frac{d}{2\pi r}}\exp\inparen{-\frac{dt^2}{2r}}.
\]
We have $R \sim \chi^2_N$. Denote the marginal density of $R$ by $f_R$. By Bayes' formula, for any Borel set $A\subset \RR_+$ and any Borel set $B$ on the unit sphere,
\begin{equation}\label{eq:condHighProbEvent_Bayes}
    \PP\insquare{R\in A,\,U\in B\,\big|\,T=t}
    =
    \frac{\int_A f_{T | R}(t \mid r)f_R(r)\,\ud r}{\int_0^\infty f_{T | R}(t \mid r)f_R(r)\,\ud r}
    \PP\insquare{U\in B}.
\end{equation}
We deduce that conditioning on $T=t$ only influences the radius $R$ of $\Zb$, while the direction $U$ of $\Zb$ remains uniform.

We further claim that, uniformly over $\abs{t}\leq n\sqrt{2\log n}$,
\begin{equation}\label{eq:condHighProbEvent_Zb_radius_bound}
    \PP\insquare{\abs{\frac{R}{N}-1}>\frac{\rho_n}{4}\,\bigg|\, T=t}=o(n^{-3}).
\end{equation}
We prove \eqref{eq:condHighProbEvent_Zb_radius_bound} now. Define $h_t(r)=f_{T | R}(t \mid r)f_R(r)$. Then by taking $B$ in \eqref{eq:condHighProbEvent_Bayes} to be the whole unit sphere, and using the change of variables $r=N(1+s)$, we have
\begin{equation}\label{eq:condHighProbEvent_Zb_radius_bound_changeOfVar}
    \PP\insquare{\abs{\frac{R}{N}-1}>\frac{\rho_n}{4}\,\bigg|\,T=t}
    =
    \frac{
    N\int_{\abs{s}>\rho_n/4}h_t(N(1+s))\,\ud s
    }{
    N\int_{-1}^{\infty}h_t(N(1+s))\,\ud s
    }.
\end{equation}
We consider the numerator first. Here and below, the condition \(|s|>\frac{\rho_n}{4}\) is understood together with \(s>-1\), since \(r=N(1+s)>0\). We have
\begin{align*}
    \log\frac{h_t(N(1+s))}{h_t(N)}
    =
    \inparen{\frac{N}{2}-\frac32}\log(1+s)-\frac{Ns}{2}
    +\frac{t^2}{2n}\frac{s}{1+s} \leq -cNs^2+C\abs{s}n\log n,
\end{align*}
where the inequality uses $t^2/n\leq 2n\log n$, and $\log(1+s)-s\leq -c s^2$ which holds for $\abs{s} < 1/2$. Furthermore, for $\abs{s}\geq \frac{\rho_n}{4}$,
\[
    \frac{C\abs{s}n\log n}{cNs^2}
    \leq
    \frac{C\log n}{d\abs{s}}
    \leq
    C\sqrt{\frac{\log n}{Kd}}
    =o(1).
\]
Thus on the region, $\frac{\rho_n}{4}\leq \abs{s}\leq \frac{1}{2}$, for $n$ large enough,
\[
    h_t(N(1+s))\leq h_t(N)\exp\inparen{-cNs^2}.
\]
The contribution to the numerator in \eqref{eq:condHighProbEvent_Zb_radius_bound_changeOfVar} from this region is therefore at most
\begin{equation}\label{eq:condHighProbEvent_Zb_radius_bound_numContribMain}
    N h_t(N)\int_{\frac{\rho_n}{4}\leq\abs{s}\leq \frac{1}{2}}\exp\inparen{-cNs^2}\,\ud s
    \leq
    C h_t(N)\sqrt N\exp\inparen{-cN\rho_n^2}.
\end{equation}
We now consider the remaining region $\abs{s}>\frac{1}{2}$. If $-1<s<-\frac{1}{2}$, then
$
    \frac{t^2}{2n}\frac{s}{1+s}
$
is nonpositive, and so
\[
    \log\frac{h_t(N(1+s))}{h_t(N)}
    \leq
    \inparen{\frac{N}{2}-\frac{3}{2}}\log(1+s)-\frac{Ns}{2}
    \leq -cN.
\]
Therefore,
\begin{equation}\label{eq:condHighProbEvent_Zb_radius_bound_negTail}
    N\int_{-1}^{-\frac{1}{2}}h_t(N(1+s))\,\ud s
    \leq
    C N h_t(N)\exp\inparen{-cN}.
\end{equation}
If $s>\frac{1}{2}$, then $s-\log(1+s)\geq c$, and so
\[
    \log\frac{h_t(N(1+s))}{h_t(N)}
    \leq -cN-\frac{3}{2}\log(1+s)+Cn\log n
    \leq -cN-\frac{3}{2}\log(1+s),
\]
where the last inequality uses $N=nd$ and $d=\omega(\log n)$. Thus, for $s>\frac{1}{2}$,
\[
    h_t(N(1+s))
    \leq
    h_t(N)\exp\inparen{-cN}(1+s)^{-\frac{3}{2}},
\]
and hence
\begin{equation}\label{eq:condHighProbEvent_Zb_radius_bound_posTail}
    N\int_{\frac{1}{2}}^\infty h_t(N(1+s))\,\ud s
    \leq
    C N h_t(N)\exp\inparen{-cN}.
\end{equation}
Combining \eqref{eq:condHighProbEvent_Zb_radius_bound_negTail} and \eqref{eq:condHighProbEvent_Zb_radius_bound_posTail}, we see that the contribution from the region $\abs{s}>\frac{1}{2}$ to the numerator in \eqref{eq:condHighProbEvent_Zb_radius_bound_changeOfVar} is negligible compared with \eqref{eq:condHighProbEvent_Zb_radius_bound_numContribMain}, since $\rho_n^2=o(1)$. Together with \eqref{eq:condHighProbEvent_Zb_radius_bound_numContribMain}, this gives the full numerator bound
\begin{equation}\label{eq:condHighProbEvent_Zb_radius_bound_numContrib}
    N\int_{\abs{s}>\rho_n/4}h_t(N(1+s))\,\ud s
    \leq
    C h_t(N)\sqrt N\exp\inparen{-cN\rho_n^2}.
\end{equation}

For the denominator, we instead lower bound the integral by restricting to $0\leq s\leq N^{-1/2}$. On this interval, by similar calculations, we have
\begin{equation}\label{eq:condHighProbEvent_Zb_radius_bound_denom}
    N\int_{-1}^{\infty}h_t(N(1+s))\,\ud s
    \geq
    c h_t(N)\sqrt N.
\end{equation}
Combining the numerator \eqref{eq:condHighProbEvent_Zb_radius_bound_numContrib} and denominator \eqref{eq:condHighProbEvent_Zb_radius_bound_denom} bounds gives the claimed \eqref{eq:condHighProbEvent_Zb_radius_bound}:
\[
    \PP\insquare{\abs{\frac{R}{N}-1}>\frac{\rho_n}{4}\,\bigg|\,T=t}
    \leq C\exp\inparen{-c\rho_n^2N}
    =o(n^{-3}).
\]

This size of $\norm{\Zb}_F^2$ estimate \eqref{eq:condHighProbEvent_Zb_radius_bound} will now be used to show \eqref{eq:GM_conditional_Z_regular}.

Since $U$ remains uniform after conditioning on $T=t$, we may realize the conditional law of $\Zb$ given $T=t$ as follows: draw $R$ from its conditional law given $T=t$, draw $\Gb = (G_1,\ldots,G_n)$ independently, where each $G_i\in\RR^d$ has i.i.d.~$\cN(0,1)$ entries, and set
\[
    \Zb=\sqrt R\,\frac{\Gb}{\norm{\Gb}_F}.
\]
Under this coupling, the following identities hold:
\[
    \frac{\norm{Z_i}_2^2}{d}
    =
    \frac{R}{nd}\cdot
    \frac{\norm{G_i}_2^2/d}{\norm{\Gb}_F^2/(nd)},
    \qquad
    \frac{\abs{Z_i^\top Z_j}}{d}
    =
    \frac{R}{nd}\cdot
    \frac{\abs{G_i^\top G_j}/d}{\norm{\Gb}_F^2/(nd)}.
\]
Claim \eqref{eq:condHighProbEvent_Zb_radius_bound} controls the prefactor $R/(nd)$. It remains to control the two ratios involving $\Gb$.
A standard chi-square tail bound gives, for $0<u<1$,
$
    \PP\insquare{\abs{\frac{\chi_m^2}{m}-1}>u}\leq 2\exp\inparen{-cmu^2}.
$
Applying this with $m=d$ to each row and with $m=nd$ to $\norm{\Gb}_F^2$, and using the Gaussian inner-product bound
\[
    \PP\insquare{\abs{G_i^\top G_j}>ud}\leq C\exp\inparen{-cu^2d},
    \qquad i\neq j,
\]
we obtain, after a union bound over $i,j$,
\[
    \PP\insquare{
    \max_i\abs{\frac{\norm{G_i}_2^2}{d}-1}>\frac{\rho_n}{8}
    \text{ or }
    \abs{\frac{\norm{\Gb}_F^2}{nd}-1}>\frac{\rho_n}{8}
    \text{ or }
    \max_{i\neq j}\frac{\abs{G_i^\top G_j}}{d}>\frac{\rho_n}{8}
    }
    \leq Cn^2\exp\inparen{-c\rho_n^2d}=o(n^{-3}),
\]
where the last step follows by choosing $K$ large enough. Altogether, Claim \eqref{eq:condHighProbEvent_Zb_radius_bound}, the last display and the identities above imply $\abs{\norm{Z_i}_2^2/d-1}\leq \rho_n$ for all $i$ and $\abs{Z_i^\top Z_j}/d\leq \rho_n$ for all $i\neq j$, except on an event with conditional probability $o(n^{-3})$. This is exactly $\PP[\cG_n^c\mid T=t]=o(n^{-3})$, proving \eqref{eq:GM_conditional_Z_regular}.

For $\Pi\in \mathscr{P}_k$, call $i$ a fixed point of $\Pi$ if $\Pi_{ii}=1$. Let $S(\Pi)=\{i:\Pi_{ii}=1\}$ be its set of fixed points. We now decompose each $\cE_k$ according to $\abs{S(\Pi)}$ . For $0\leq j\leq k$, define
\[
    \cE_{k,j}=
    \inbraces{
    \max_{\Pi\in\mathscr{P}_k:\abs{S(\Pi)}=j}
    \frac{1}{\sqrt{k}}\tr \Pi W
    \leq \sqrt{(2+\epsilon)(k\vee k_0)\log n}
    }.
\]
Then $\cE_k=\cap_{j=0}^{k}\cE_{k,j}$. We have the decomposition
\begin{equation}\label{eq:GM_conditional_kj_split}
    \max_{\Pi\in\mathscr{P}_k:\abs{S(\Pi)}=j}\frac{1}{\sqrt{k}}\tr \Pi W
    =
    \max_{S\subset[n]:\abs{S}=j}
    \inbraces{
    \frac{1}{\sqrt{k}}\sum_{i\in S}W_{ii}
    +
    \max_{\Pi'\in D_{[n]\backslash S}}\frac{1}{\sqrt{k}}\tr \Pi'W
    },
\end{equation}
where $D_{[n]\backslash S}$ is the set of $(k-j)$-subpermutation matrices supported on $[n]\backslash S$ with no fixed points.
When $j=0$ the fixed-point term is absent, and when $j=k$ the derangement term is absent.

We first consider the fixed point part in \eqref{eq:GM_conditional_kj_split}. For $S\subset[n]$ with $\abs{S}=j$, decompose
\[
    \sum_{i\in S}W_{ii}
    =
    \underbrace{\frac{\norm{\Zb_S}_F^2}{\norm{\Zb}_F^2}T}_{=:\mu_S}
    +
    \underbrace{\sum_{i\in S}W_{ii}-\frac{\norm{\Zb_S}_F^2}{\norm{\Zb}_F^2}T}_{=:R_S}.
\]
Conditionally on $\Zb$, a calculation shows
\begin{equation}\label{eq:GM_conditional_fixed_point_covariances}
    \Var\insquare{\sum_{i\in S}W_{ii}\,\bigg|\,\Zb}
    =
    \frac{\norm{\Zb_S}_F^2}{d},
    \qquad
    \Cov\insquare{\sum_{i\in S}W_{ii},T\,\bigg|\,\Zb}
    =
    \frac{\norm{\Zb_S}_F^2}{d}.
\end{equation}
Also $\Var[T\mid \Zb]=\norm{\Zb}_F^2/d$. Hence $R_S$ and $\mu_S$ are uncorrelated conditionally on $\Zb$; since they are jointly Gaussian conditionally on $\Zb$, they are independent. Conditioning further on $T$ fixes $\mu_S$, and as $S$ varies the variables $R_S$ form a centered Gaussian process. From \eqref{eq:GM_conditional_fixed_point_covariances},
\[
    \Var\insquare{R_S\,\big|\,T,\Zb}
    =
    \frac{1}{d}
    \inparen{
    \norm{\Zb_S}_F^2
    -
    \frac{\norm{\Zb_S}_F^4}{\norm{\Zb}_F^2}
    }
    =
    \frac{\norm{\Zb_S}_F^2\norm{\Zb_{S^c}}_F^2}{d\norm{\Zb}_F^2}.
\]
Equivalently, 
\[
    \inparen{\sum_{i\in S}W_{ii}\,\bigg|\,T=t,\Zb}
    \sim
    \cN\inparen{
    \frac{\norm{\Zb_S}_F^2}{\norm{\Zb}_F^2}t,
    \frac{\norm{\Zb_S}_F^2\norm{\Zb_{S^c}}_F^2}{d\norm{\Zb}_F^2}
    }.
\]
	On $\cG_n$, we have $\norm{\Zb_S}_F^2\leq (1+o(1))jd$, $\norm{\Zb_{S^c}}_F^2\leq (1+o(1))(n-j)d$, and $\norm{\Zb}_F^2\geq (1-o(1))nd$. Thus
	\[
	    \1_{\cG_n}
	    \sup_{\abs{S}=j}\Var\insquare{\frac{R_S}{\sqrt{k}}\,\bigg|\,T,\Zb}
	    \leq
	    (1+o(1))\frac{j}{k}\frac{n-j}{n},
	\]
	and therefore
	\[
	    \1_{\cG_n}
	    \EE\insquare{\max_{\abs{S}=j}\frac{R_S}{\sqrt{k}}\,\bigg|\,T,\Zb}
	    \leq
	    (1+o(1))\sqrt{\frac{j}{k}\frac{n-j}{n}}\sqrt{2(\log 2)n}.
\]

Fix $\epsilon'>0$ sufficiently small, depending only on $\epsilon$. Let
\[
    A_{k,j}=
    \inbraces{
    \max_{\abs{S}=j}\frac{R_S}{\sqrt{k}}
    \leq
    \sqrt{\frac{j}{k}\frac{n-j}{n}}\sqrt{(2+\epsilon')(\log 2)n}
    },
    \qquad 1\leq j\leq k.
\]

	Gaussian concentration applied conditionally on $T$ and $\Zb$  gives, uniformly in $k,j$ and $\abs{t}\leq n\sqrt{2\log n}$,
	\[
	    \1_{\cG_n}
	    \PP\insquare{A_{k,j}^c\,\big|\,T=t,\Zb}
	    \leq
	    Ce^{-cn}.
	\]
	Integrating over $\Zb$ and using \eqref{eq:GM_conditional_Z_regular} gives
	\begin{equation}\label{eq:GM_conditional_Akj_bound}
	    \PP\insquare{A_{k,j}^c\,\big|\,T=t}
	    \leq o(n^{-3})+Ce^{-cn}.
	\end{equation}

We next consider the derangement part in \eqref{eq:GM_conditional_kj_split}. For $\Pi'\in D_{[n]\backslash S}$, writing $(i,\ell)\in\Pi'$ to mean that $(i,\ell)$ is a matched pair, i.e.~$\Pi'_{i\ell}=1$, decompose
\[
    \tr \Pi'W
    =
    \underbrace{
    \frac{\sum_{(i,\ell)\in \Pi'}Z_i^\top Z_\ell}{\norm{\Zb}_F^2}T
    }_{=:\mu'(\Pi')}
    +
    \underbrace{
    \tr \Pi'W-\frac{\sum_{(i,\ell)\in \Pi'}Z_i^\top Z_\ell}{\norm{\Zb}_F^2}T
    }_{=:R'(\Pi')}.
\]
Conditionally on $(T,\Zb)$, we claim that the variables $\inbraces{R'(\Pi')}_{S:\abs{S}=j,\ \Pi'\in D_{[n]\backslash S}}$ form a centered Gaussian process:
\[
    \inparen{\tr \Pi'W\,\big|\,T=t,\Zb}
    \sim
    \cN\!\inparen{
    \frac{\sum_{(i,\ell)\in \Pi'}Z_i^\top Z_\ell}{\norm{\Zb}_F^2}t,
    \frac{1}{d}
    \inparen{
    \sum_{(i,\ell)\in \Pi'}\norm{Z_i}_2^2
    -
    \frac{\inparen{\sum_{(i,\ell)\in \Pi'}Z_i^\top Z_\ell}^2}{\norm{\Zb}_F^2}
    }
    }.
\]
To see this, for fixed $S$ and $\Pi'$, conditionally on $\Zb$, compute
\[
    \Var\insquare{\tr \Pi'W\,\bigg|\,\Zb}
    =
    \frac{1}{d}\sum_{(i,\ell)\in \Pi'}\norm{Z_i}_2^2,
    \qquad
    \Cov\insquare{\tr \Pi'W,T\,\bigg|\,\Zb}
    =
    \frac{1}{d}\sum_{(i,\ell)\in \Pi'}Z_i^\top Z_\ell.
\]
Therefore
\[
    \Var\insquare{R'(\Pi')\,\big|\,T,\Zb}
    =
    \frac{1}{d}
    \inparen{
    \sum_{(i,\ell)\in \Pi'}\norm{Z_i}_2^2
    -
    \frac{\inparen{\sum_{(i,\ell)\in \Pi'}Z_i^\top Z_\ell}^2}{\norm{\Zb}_F^2}
    }.
\]
On $\cG_n$, since $\Pi'$ has $k-j$ nonzero entries, we thus obtain 
\begin{align*}
    \1_{\cG_n}
    \sup_{S,\Pi'}\Var\insquare{\frac{R'(\Pi')}{\sqrt{k}}\,\bigg|\,T,\Zb}
    &\leq
    \frac{\1_{\cG_n}}{dk}\sup_{S,\Pi'}\sum_{(i,\ell)\in \Pi'}\norm{Z_i}_2^2 \leq
    (1+o(1))\frac{k-j}{k}.
\end{align*}
The number of choices of $S$ and $\Pi'$ is at most
\[
    \binom{n}{j}\binom{n-j}{k-j}^2(k-j)!
    \leq
    4^n n^{k-j}
    =
    \exp\inbraces{2(\log 2)n+(k-j)\log n}.
\]
Let
\[
    B_{k,j}=
    \inbraces{
    \max_{\abs{S}=j}\max_{\Pi'\in D_{[n]\backslash S}}
    \frac{R'(\Pi')}{\sqrt{k}}
    \leq
    \sqrt{\frac{k-j}{k}}
    \inparen{\sqrt{(4\log 2+\epsilon')n}+\sqrt{2(k-j)\log n}}
    },
    \qquad 0\leq j\leq k-1.
\]
Again by Gaussian concentration conditionally on $T$ and $\Zb$,
\[
    \1_{\cG_n}
    \PP\insquare{B_{k,j}^c\,\big|\,T=t,\Zb}
    \leq
    Ce^{-cn}.
\]
Integrating over $\Zb$ and using \eqref{eq:GM_conditional_Z_regular},
\begin{equation}\label{eq:GM_conditional_Bkj_bound}
    \PP\insquare{B_{k,j}^c\,\big|\,T=t}
    \leq o(n^{-3})+Ce^{-cn},
\end{equation}
uniformly in $k,j$ and $\abs{t}\leq n\sqrt{2\log n}$.

We now combine the two parts. On $\cG_n$ and $\abs{t}\leq n\sqrt{2\log n}$,
\[
    \max_{\abs{S}=j}\frac{\mu_S}{\sqrt{k}}
    \leq
    (1+o(1))\frac{j}{\sqrt{k}}\sqrt{2\log n},
\]
and
\[
    \max_{\abs{S}=j}\max_{\Pi'\in D_{[n]\backslash S}}
    \frac{\mu'(\Pi')}{\sqrt{k}}
    \leq
    o(1)\frac{k-j}{\sqrt{k}}\sqrt{2\log n}.
\]
Therefore, on $\cG_n$, together with $A_{k,j}$ when $j\geq 1$ and $B_{k,j}$ when $j\leq k-1$, the split \eqref{eq:GM_conditional_kj_split} gives 
\begin{align*}
    \max_{\Pi\in\mathscr{P}_k:\abs{S(\Pi)}=j}\frac{1}{\sqrt{k}}\tr \Pi W
    &\leq
    (1+o(1))\frac{j}{\sqrt{k}}\sqrt{2\log n}
    +\sqrt{\frac{j}{k}\frac{n-j}{n}}\sqrt{(2+\epsilon')(\log 2)n} \\
    &\quad
    +o(1)\frac{k-j}{\sqrt{k}}\sqrt{2\log n}
    +\sqrt{\frac{k-j}{k}}
    \inparen{\sqrt{(4\log 2+\epsilon')n}+\sqrt{2(k-j)\log n}} \\
    &\leq
    (1+o(1))\sqrt{2k\log n}+O_{\epsilon'}(\sqrt n).
\end{align*}
If $k>k_0$, this is at most $\sqrt{(2+\epsilon)k\log n}$ for sufficiently large $n$. If $k\leq k_0$, it is at most $\sqrt{(2+\epsilon)k_0\log n}$ for sufficiently large $n$. Hence $\cE_{k,j}$ holds for every $k,j$ on
\[
    \cG_n
    \cap\bigcap_{k=1}^{n}\bigcap_{j=1}^{k}A_{k,j}
    \cap\bigcap_{k=1}^{n}\bigcap_{j=0}^{k-1}B_{k,j}.
\]
Finally, by \eqref{eq:GM_conditional_Z_regular}, \eqref{eq:GM_conditional_Akj_bound}, and \eqref{eq:GM_conditional_Bkj_bound},
\begin{align*}
    \PP\insquare{\cE^c\,\big|\,T=t}
    &\leq
    \PP\insquare{\cG_n^c\,\big|\,T=t}
    +\sum_{k=1}^{n}\sum_{j=1}^{k}\PP\insquare{A_{k,j}^c\,\big|\,T=t}
    +\sum_{k=1}^{n}\sum_{j=0}^{k-1}\PP\insquare{B_{k,j}^c\,\big|\,T=t} \\
    &\leq o(n^{-3})+n^2o(n^{-3})+Cn^2e^{-cn}
    =o(1),
\end{align*}

uniformly over $\abs{t}\leq n\sqrt{2\log n}$.
\end{proof}

\subsubsection{First moment computations}

We make conditional first moment computations in this section. Recall the free energy $\cZ$ defined in \eqref{eq:GM_noiseFE_def} and the high probability event $\cE$ defined in \eqref{eq:GM_highProbEvent_def}.

\begin{lemma}\label{lemma:GM_logEZ1cE_UB}
    For any $b < 2$ and any fixed $\epsilon,c_0>0$, with $\cE=\cE(\epsilon,c_0)$,
    \begin{equation}\label{eq:firstMoment_epsilon_upperBound}
        \limsup_{n \to \infty} \frac{1}{n \log n} \log \EE[\cZ \boldsymbol{1}_{\cE}] \leq \frac{b}{2} + 1.
    \end{equation}
\end{lemma}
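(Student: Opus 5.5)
Since $\cZ\1_{\cE}\leq \cZ$ pointwise, it suffices to bound the untruncated first moment, i.e.\ to show $\limsup_{n}\frac{1}{n\log n}\log\EE\cZ\le \frac b2+1$. The event $\cE$ plays no role here; the truncation only matters for the second moment and for the matching lower bound on $\EE[\cZ\1_\cE]$. So the plan is simply to compute $\EE\cZ$ by linearity and a Gaussian moment generating function computation, in the same spirit as the evaluation of $V(S,\Pi)$ in the proof of Proposition~\ref{prop:GM_Psi_UB}.

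By linearity, $\EE\cZ=\sum_{\Pi}\EE\exp\{\sqrt{b\log n}\,\tr\Pi\Xb\Zb^\top/\sqrt d\}$. For a fixed $\Pi$ we have $\tr \Pi\Xb\Zb^\top=\sum_i X_{\Pi(i)}^\top Z_i$ (up to the indexing convention). Conditionally on $\Xb$, this is Gaussian with variance $\sum_i\norm{X_{\Pi(i)}}_2^2=\norm{\Xb}_F^2$, a quantity that does not depend on $\Pi$. Integrating out $\Zb$ therefore gives
\[
\EE\exp\Big\{\tfrac{\sqrt{b\log n}}{\sqrt d}\tr\Pi\Xb\Zb^\top\Big\}=\EE\exp\Big\{\tfrac{b\log n}{2d}\norm{\Xb}_F^2\Big\}=\Big(1-\tfrac{b\log n}{d}\Big)^{-nd/2},
\]
which is valid for large $n$ because $b\log n/d\to0$ when $d=\omega(\log n)$.

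Summing over the $n!$ permutations yields $\log\EE\cZ=\log n!-\frac{nd}{2}\log(1-\frac{b\log n}{d})$. By Stirling, $\log n! = n\log n-O(n)$. Expanding the logarithm, $-\frac{nd}{2}\log(1-x)$ with $x=b\log n/d$ equals $\frac{b}{2}n\log n\,(1+O(x))$, and $O(x)=o(1)$. Dividing by $n\log n$ gives $\frac1{n\log n}\log\EE\cZ=1+\frac b2+o(1)$, which proves the bound. In fact it gives equality for the untruncated moment, and this holds for every $b>0$, not only for $b<2$.

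There is no genuine obstacle in this argument. The only points to check are that the Gaussian mgf is finite, which holds since $b\log n/d<1$ eventually, and that the $o(1)$ error terms are uniform in $\Pi$, which is automatic because each summand is identical.
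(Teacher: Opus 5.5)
Your proposal is correct and follows the paper's proof: drop $\1_{\cE}$, observe that every summand equals the identity-permutation term, and evaluate the Gaussian and then chi-squared moment generating function to get $\EE\cZ = n!\,(1-b\log n/d)^{-nd/2}=\exp\{(1+\tfrac b2+o(1))\,n\log n\}$. You also correctly note that this untruncated bound holds for every $b>0$, not only for $b<2$.
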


\begin{proof}
    By dropping the indicator, we have since $(b \log n)/d < 1$ for large enough $n$,
    \begin{align*}
        \EE[\cZ \boldsymbol{1}_{\cE}] &= \sum_{\Pi}\EE\insquare{\exp \inparen{ \sqrt{b\log n} \tr \Pi \frac{\Xb \Zb^\top}{\sqrt{d}} } \boldsymbol{1}_{\cE} } 
        \leq n! \EE\insquare{ \exp \inparen{ \sqrt{b\log n} \tr \frac{\Xb \Zb^\top}{\sqrt{d}} }   } \\
        &= n! \inparen{1 - \frac{b \log n}{d}}^{-\frac{dn}{2}} = n! \exp\inbraces{ \frac{b}{2}n\log n + o(n \log n)   },
    \end{align*}
    where we used $\log (1-x) = -x + O(x^2)$ and $d = \omega(\log n)$. Thus \eqref{eq:firstMoment_epsilon_upperBound} holds. 
\end{proof}

Our next result is a matching lower bound for the conditional first moment.

\begin{lemma}\label{lemma:GM_logEZ1cE_LB}
    For any $b < 2$ and any fixed $\epsilon,c_0>0$, with $\cE=\cE(\epsilon,c_0)$,
    \begin{equation}\label{eq:GM_logEZ1cE_LB}
        \liminf_{n \to \infty} \frac{1}{n \log n} \log \EE[\cZ \boldsymbol{1}_{\cE}] \geq \frac{b}{2} + 1.
    \end{equation}
\end{lemma}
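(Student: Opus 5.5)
By symmetry of the law of $(\Xb,\Zb)$ under $\Xb\mapsto\Pi_0\Xb$ (the event $\cE$ is invariant under simultaneous row permutations), every summand $\EE[\exp\{\sqrt{b\log n}\tr\Pi W\}\1_\cE]$ with $W=\Xb\Zb^\top/\sqrt d$ takes the same value. Hence
\[
\EE[\cZ\1_\cE]=n!\,\EE\insquare{e^{\sqrt{b\log n}\,T}\1_\cE},\qquad T=\tr W.
\]
Since $\log n!=(1+o(1))n\log n$, it suffices to show $\EE[e^{\sqrt{b\log n}T}\1_\cE]\geq \exp\{(b/2-o(1))n\log n\}$. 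I plan to condition on $T$ and invoke Lemma~\ref{lemma:GM_highProbEvent_conditional_corrected} with $\Pi_0=I$. On $\{|T|\leq n\sqrt{2\log n}\}$ this lemma gives $\PP[\cE\mid T]\geq 1-o(1)$, uniformly in $T$. Therefore
\[
\EE\insquare{e^{\sqrt{b\log n}T}\1_\cE}\geq (1-o(1))\,\EE\insquare{e^{\sqrt{b\log n}T}\1\{|T|\leq n\sqrt{2\log n}\}}.
\]

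The remaining step is a one-dimensional tilted computation. Conditionally on $\Zb$, $T\sim\cN(0,R/d)$ with $R=\norm{\Zb}_F^2\approx nd$, so $T$ is approximately $\cN(0,n)$. Under the exponential tilt $e^{\sqrt{b\log n}T}$, $T$ concentrates near $n\sqrt{b\log n}$, which lies strictly inside the window $n\sqrt{2\log n}$ precisely because $b<2$. This is the place where the hypothesis enters. Concretely, I would restrict to the window $T\in[n\sqrt{b\log n}-n, n\sqrt{b\log n}+n]$ and to $R\in[(1-\eta)nd,(1+\eta)nd]$, an event of probability $1-e^{-cnd}$. On it, the conditional Gaussian density of $T$ integrated against $e^{\sqrt{b\log n}T}$ gives at least
\[
\exp\inbraces{\tfrac{b\log n}{2}\tfrac{R}{d}-O(\log n)}\geq \exp\{(1-\eta)\tfrac b2 n\log n\}\cdot n^{-O(1)},
\]
times a constant probability factor. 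The window is contained in $\{|T|\leq n\sqrt{2\log n}\}$ for large $n$ since $\sqrt b<\sqrt2$. Letting $\eta\downarrow0$ yields the claimed $\liminf$.

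The main obstacle would be ensuring that the truncation $\1_\cE$ does not remove the tilted mass. A naive bound $\EE[e^{\lambda T}\1_\cE]\geq \EE e^{\lambda T}-\EE[e^{\lambda T}\1_{\cE^c}]$ fails, because $\cE^c$ could be correlated with large $T$. That correlation issue is exactly what the conditional statement Lemma~\ref{lemma:GM_highProbEvent_conditional_corrected} resolves: it holds uniformly in $t$ up to $n\sqrt{2\log n}$, which covers the tilted location. Given that lemma, the only care needed is measure-theoretic: write the expectation as $\EE[e^{\lambda T}\PP(\cE\mid T)]$ via the tower property.
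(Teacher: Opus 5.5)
Your proposal is correct and follows essentially the paper's argument. You make every summand equal to the $\Pi=I$ one, use Lemma~\ref{lemma:GM_highProbEvent_conditional_corrected} (uniformly over $|T|\le n\sqrt{2\log n}$) to replace $\1_\cE$ by the window indicator at cost $1-o(1)$, and then evaluate a truncated Gaussian MGF in which $b<2$ places the tilted mass inside the window. For the symmetry step, the invariance you actually need is of $\cE$ under $\Xb\mapsto\Pi_0\Xb$ alone, not under simultaneous row permutations; it holds because $S\mapsto S\Pi_0$ is a bijection of $\mathscr{P}_k$. For the last step, the paper conditions on $\Xb$ and observes that under the tilt $R=\norm{\Xb}_F^2/(nd)$ is a rescaled $\chi^2_{nd}/(nd)$ concentrating at $1$, so the truncated MGF is $(1-o(1))$ times the full MGF with no $\eta$-loss at all.

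One intermediate claim in your computation is off. With $\eta$ fixed, the window is centered at $n\sqrt{b\log n}$, while the tilted mean is $\sqrt{b\log n}\,R/d$. These can differ by up to $\eta n\sqrt{b\log n}\gg\sqrt{n}$. So at the edges of your $R$-range the probability factor is $\exp\{-\Theta(\eta^2 n\log n)\}$, not a constant, and the loss is $O(\eta)\,n\log n$, not $O(\log n)$. This is harmless once you send $\eta\downarrow 0$. You can also avoid it by centering the window at $\sqrt{b\log n}\,R/d$ and taking $\eta$ small enough that $(1+\eta)\sqrt b<\sqrt 2$.
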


\begin{proof}
    For a fixed permutation matrix $\Pi$, define the event
    $
        E(\Pi) = \inbraces{\abs{\frac{1}{\sqrt{nd}} \tr \Pi \Xb \Zb^\top} \leq \sqrt{2n \log n} }.
    $
     Then
    \begin{align*}
        \EE[\cZ \boldsymbol{1}_{\cE}] &= \sum_{\Pi} \EE \insquare{ \exp\inparen{ \sqrt{ bn\log n} \frac{\tr \Pi \Xb \Zb^\top}{\sqrt{nd}}  } \boldsymbol{1}_{\cE} } \\
        &\geq \sum_{\Pi} \EE \insquare{ \exp\inparen{ \sqrt{ bn\log n} \frac{\tr \Pi \Xb \Zb^\top}{\sqrt{nd}}}  \boldsymbol{1}_{E(\Pi)}   \boldsymbol{1}_{\cE} } \\
        &= \sum_{\Pi} \EE \insquare{ \exp\inparen{ \sqrt{ bn\log n} \frac{\tr \Pi \Xb \Zb^\top}{\sqrt{nd}}}  \boldsymbol{1}_{E(\Pi)} \PP\big[ \cE \,\big|\, \tr \Pi \Xb \Zb^\top  \big] } \\
        &= (1-o(1)) \sum_{\Pi} \EE \insquare{ \exp\inparen{ \sqrt{ bn\log n} \frac{\tr \Pi \Xb \Zb^\top}{\sqrt{nd}}}  \boldsymbol{1}_{E(\Pi)} } \\
        &= (1-o(1)) \ n! \ \EE \insquare{  \exp\inparen{ \sqrt{ bn\log n} \frac{\tr  \Xb \Zb^\top}{\sqrt{nd}} } \boldsymbol{1}\!\inbraces{ \abs{\frac{\tr \Xb \Zb^\top}{\sqrt{nd}}}  \leq \sqrt{2n \log n}  } },
    \end{align*}
    where the last two lines  follows from Lemma \ref{lemma:GM_highProbEvent_conditional_corrected} and the fact that $\tr \Pi \Xb \Zb^\top$ has the same distribution for all $\Pi$. Abbreviate
    \[
        V = \frac{\tr \Xb \Zb^\top}{\sqrt{nd}}, \qquad \lambda = \sqrt{bn\log n}, \qquad t = \sqrt{2n\log n}.
    \]
We claim that for $0 \leq b < 2$
\begin{align}
    &\EE\insquare{ \exp (\lambda V) \boldsymbol{1}\!\inbraces{ \abs{V} \leq t  }  } = \exp\inbraces{\frac{b}{2}n\log n+o(n\log n)}.
    \label{eq:FM_epsilon_LB_truncatedMGF}
\end{align}
The desired \eqref{eq:GM_logEZ1cE_LB} follows from this claim and the approximation $n! = \exp(n \log n + o(n \log n))$.

    We now verify the claim. Let $R = \norm{\Xb}_F^2/(nd)$. Conditionally on $\Xb$, we have $V \sim \cN(0,R)$. Note $ndR \sim \chi^2_{nd}$. Let 
    $
        a \coloneqq \frac{\lambda^2}{nd} = \frac{b\log n}{d} = o(1).
    $
    The \emph{untruncated} moment generating function is
    \begin{align}\label{eq:GM_firstMoment_untruncatedMGF}
        M(\lambda) \coloneqq \EE[e^{\lambda V}]
        = \EE\exp\inparen{\frac{\lambda^2R}{2}}
        = (1-a)^{-\frac{nd}{2}}
        = \exp\inparen{\frac{b}{2}n\log n+o(n\log n)}.
    \end{align}

    Consider a tilted law $\widetilde{\EE}$ of $R$, tilted by the factor $\exp\inbraces{ \frac{\lambda^2 R}{2}  } / M(\lambda)$; i.e.~for any function $f$, $\widetilde{\EE}f(R) = \EE\insquare{f(R) \exp\inbraces{ \frac{\lambda^2 R}{2}  } / M(\lambda)}$. Thus the left-hand side of \eqref{eq:FM_epsilon_LB_truncatedMGF} can be written as 
    \begin{align}
        \EE\insquare{e^{\lambda V}\1\!\inbraces{\abs{V} \leq t}}
        &= \EE\insquare{\exp\inparen{\frac{\lambda^2R}{2}}
        \inbraces{
        \Phi\inparen{\frac{t-\lambda R}{\sqrt{R}}}
        -
        \Phi\inparen{\frac{-t-\lambda R}{\sqrt{R}}}
        }} \nonumber\\
        &= M(\lambda) \cdot \widetilde{\EE}\insquare{
        \Phi\inparen{ \frac{t - \lambda R}{\sqrt{R}} }
        -
        \Phi\inparen{ \frac{-t - \lambda R}{\sqrt{R}} }
        }, \label{eq:GM_firstMoment_truncatedMGF_tiltedR}
    \end{align}
    where $\Phi$ is the standard Gaussian CDF, and where the first equality follows from standard Gaussian truncated moment generating function identities.
    
   Under this tilted law, $R$ follows the scaled chi-squared distribution
    \[
        R \sim \frac{1}{1-a}\frac{\chi^2_{nd}}{nd}.
    \]
    By concentration, a random variable  $Y \sim \chi^2_{nd}/(nd)$ satisfies $Y = 1 + o_{\PP}(1)$. It follows that $R = 1 + o_{\widetilde \PP}(1)$, where $\widetilde{\PP}$ refers to the tilted law. Then for any fixed $0 \leq b < 2$,
    \[
        \frac{t-\lambda R}{\sqrt{R}}
        = \inparen{\sqrt{2}-\sqrt{b}+o_{\widetilde{\PP}}(1)}\sqrt{n\log n}
        \longrightarrow +\infty.
    \]
    Also $\frac{-t-\lambda R}{\sqrt R}\to -\infty$, so the second CDF term in \eqref{eq:GM_firstMoment_truncatedMGF_tiltedR} is $o(1)$. Therefore the tilted expectation in \eqref{eq:GM_firstMoment_truncatedMGF_tiltedR} is $1-o(1)$. Together with \eqref{eq:GM_firstMoment_untruncatedMGF}, this finishes the proof.
\end{proof}

\subsubsection{Second moment}

The main result of this section is a bound on the conditional second moment of $\cZ$.

\begin{lemma}\label{lemma:GM_truncatedSecondMoment}
For any $b < 2$ and any $\zeta>0$, there exist $\epsilon,c_0>0$ such that, with $\cE=\cE(\epsilon,c_0)$ defined in \eqref{eq:GM_highProbEvent_def}, 

\begin{equation}\label{eq:GM_truncatedSecondMoment}
    \limsup_{n \to \infty} \frac{1}{n \log n} \log \EE[\cZ^2 \1_{\cE}] \leq b + 2+\zeta.
\end{equation}
\end{lemma}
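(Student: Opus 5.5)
Write $W=\Xb\Zb^\top/\sqrt d$ and $\lambda=\sqrt{b\log n}$. The plan is to expand the square as in \eqref{eq:proofOverview_noise_second_moment}:
\[
\EE[\cZ^2\1_\cE]=\sum_{\Pi,\Pi'}\EE\bigl[e^{\lambda\tr(\Pi+\Pi')W}\1_\cE\bigr],
\]
and to group pairs by their common part. For a pair $(\Pi,\Pi')$, let $S=\Pi\wedge\Pi'\in\mathscr P_k$ be the $k$-subpermutation of shared entries (entrywise minimum). Then
\[
\tr(\Pi+\Pi')W=2\tr SW+\tr(\Pi-S)W+\tr(\Pi'-S)W,
\]
and on $\cE$ we have $\tr SW\le\sqrt k\,\sqrt{(2+\epsilon)(k\vee k_0)\log n}$. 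I will bound the shared part deterministically on $\cE$ by one copy of $\lambda\tr SW$ plus this cap, and handle the rest by an untruncated MGF:
\[
\EE\bigl[e^{\lambda\tr(\Pi+\Pi')W}\1_\cE\bigr]\le e^{\lambda\sqrt{(2+\epsilon)k(k\vee k_0)\log n}}\,\EE\, e^{\lambda\tr(\Pi+\Pi'-S)W}.
\]
The matrix $\Pi+\Pi'-S$ is a $0/1$ matrix with $2n-k$ ones, and each row and column has at most two ones.

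Next, compute $\EE e^{\lambda\tr(M W)}$ for such $M$. Conditionally on $\Xb$, the exponent is Gaussian, so the expectation equals
\[
\EE\exp\Bigl\{\frac{\lambda^2}{2d}\sum_i\Bigl\|\sum_j M_{ji}X_j\Bigr\|^2\Bigr\}=\det\bigl(I-\tfrac{\lambda^2}{d}M^\top M\bigr)^{-d/2}.
\]
Since $\|M^\top M\|_{op}\le 4$ and $\lambda^2/d=b\log n/d\to0$, this is
\[
\exp\Bigl\{\tfrac{\lambda^2}{2}\tr(M^\top M)(1+o(1))\Bigr\}=\exp\Bigl\{\tfrac b2(2n-k)\log n\,(1+o(1))\Bigr\},
\]
uniformly over all such $M$. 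Here I use $\tr(M^\top M)=$ number of ones, and the $o(1)$ is controlled via $\log\det\ge -\tr(\cdot)-O(\|\cdot\|_F^2\lambda^2/d)$. This is the step where cross-correlations between the two permutations could cause trouble: rows with two ones create $X_j^\top X_{j'}$ cross terms. But these only enter at order $\lambda^4/d$, which is $o(\log n)$ per row because $d=\omega(\log n)$.

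Then count. The number of pairs with overlap exactly $k$ is at most
\[
n!\binom nk\,(n-k)!\le \exp\{(2n-k)\log n+O(n)\}.
\]
Combining, the exponent of the $k$-term, divided by $n\log n$, is at most
\[
2-\kappa+\tfrac b2(2-\kappa)+\sqrt{b(2+\epsilon)\kappa(\kappa\vee c_0)}+o(1),\qquad \kappa=k/n.
\]
For $\kappa\le c_0$ this is at most $2+b-\kappa(1+b/2)+\sqrt{b(2+\epsilon)}\,c_0$. For $\kappa>c_0$ it is at most
\[
2+b-\kappa\bigl(1+\tfrac b2-\sqrt{b(2+\epsilon)}\bigr).
\]
This is the main obstacle: we need $1+\tfrac b2\ge\sqrt{2b}$, i.e.\ $(1-\sqrt{b/2})^2\ge0$, which holds for all $b$, with strict margin when $b<2$. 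Choosing $\epsilon$ small makes the bracket nonnegative, and choosing $c_0$ small makes $\sqrt{b(2+\epsilon)}\,c_0<\zeta$. Summing over the $n+1$ values of $k$ adds only $\log(n+1)$, which gives \eqref{eq:GM_truncatedSecondMoment}.

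One subtlety remains: $\tr SW$ with the factor $2\lambda$ is split into $\lambda\tr SW$, retained inside the MGF term $M=\Pi+\Pi'-S$, and $\lambda\tr SW$, capped by $\cE$. This split is legitimate because $\1_\cE\,e^{\lambda\tr SW}\le e^{\text{cap}}$ pointwise, and dropping the indicator afterwards only increases the expectation.
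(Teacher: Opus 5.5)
Your proof is correct, and it takes a genuinely different and more elementary route than the paper.

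\textbf{The paper's route.} The paper symmetrizes to pairs $(I,\Pi)$ and separates the doubled shared term $U_1$ from the independent non-shared part $U_2+U_3$. It evaluates $U_2+U_3$ exactly through the cycle structure of $\Pi$ (circulant eigenvalues, with $2$-cycles extremal). It controls the truncated moment generating function of $U_1$ by a peeling argument with sub-Gaussian tails. It also treats $b\le 1$ and the range $k\le k_0$ separately.

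\textbf{Your route.} You spend the event $\cE$ on only one copy of the shared term, through the pointwise bound $\1_{\cE}e^{\lambda\tr SW}\le e^{\lambda\cdot\mathrm{cap}}$. Everything else is an untruncated Gaussian moment generating function over the union $M=\Pi+\Pi'-S$. That needs only $\|M\|_{\mathrm{op}}\le 2$ and a first-order log-det expansion, uniformly in $M$, with no cycle-structure analysis.

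\textbf{How the two compare.} Use the normalization $V=\tr SW/\sqrt k$, $\lambda'=2\sqrt{bk\log n}$, $t=\sqrt{(2+\epsilon)k\log n}$. Your split is then the pointwise Chernoff bound $e^{\lambda' V}\1\{V\le t\}\le e^{(\lambda'-\theta)t}e^{\theta V}$ with the tilt $\theta=\lambda'/2$. The paper's peeling achieves the optimal tilt $\theta=t$, which gives $e^{\lambda' t-t^2/2}$. Consequently, at $\epsilon=0$ your coefficient of $k\log n$ is $-\tfrac12(\sqrt2-\sqrt b)^2$, whereas the paper's is $\gamma_{b,0}-1=-(\sqrt2-\sqrt b)^2$. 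Only the sign of this coefficient matters for the lemma, so your weaker constant suffices. Your argument also covers all $0<b<2$ and all $k$ in a single pass. The paper's approach buys the sharp truncated exponent, which this statement does not need.
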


\begin{proof}[Proof of Lemma \ref{lemma:GM_truncatedSecondMoment}]

\underline{\textbf{Case:}} $0 < b \leq 1$. In this regime we can drop the conditioning on the high probability event $\cE$. We have 
\begin{align}
    \EE[\cZ^2 \1_{\cE}] &\leq \EE{\cZ^2 } = \sum_{\Pi} \sum_{\Pi'} \EE \exp \inparen{ \sqrt{b \log n} \tr (\Pi + \Pi') \frac{\Xb \Zb^\top}{\sqrt{d}}  } \nonumber\\
    &= n! \sum_{\Pi} \EE \exp \inparen{ \sqrt{b \log n} \tr (I_n + \Pi) \frac{\Xb \Zb^\top}{\sqrt{d}}  } \nonumber\\
    &= n! \sum_{k=0}^{n} \sum_{\Pi : \abs{S(\Pi)} = k} \EE \exp \inbraces{  \frac{1}{\sigma} \cdot \zb^\top \insquare{(I_n + \Pi) \otimes I_{d} } \xb  }, \label{eq:geomMatch_UB_EZsquare_casebLessThanEqual1}
\end{align}
where $S(\Pi)\subset [n]$ is the set of fixed points of $\Pi$, and
where $\xb = \text{vec}(\Xb) \in \RR^{nd}$ and $\zb = \text{vec}(\Zb) \in \RR^{nd}$ be the vectorizations of $\Xb$ and $\Zb$, and where we recall $\sigma^2 = \frac{d}{b \log n}$. Fix $k$ and a $\Pi$ with $\abs{S(\Pi)} = k$. Conditionally on $\xb$,
\[
\EE\insquare{ \exp \inbraces{  \frac{1}{\sigma} \cdot \zb^\top \insquare{(I_n + \Pi) \otimes I_{d} } \xb  } \,\bigg|\, \xb } = \exp \inbraces{ \frac{1}{2\sigma^2} \xb^\top \insquare{(2I_n + \Pi + \Pi^\top) \otimes I_d} \xb }.
\] 
Let $S^c = [n]\setminus S$ be the set of non-fixed points of $\Pi$. Let $\xb_{S} \in \RR^{kd}$ and $\xb_{S^c}$ denote the corresponding fixed point and derangement sub-vectors of $\xb$. Taking expectation over $\xb$ in the above display, by independence between $\xb_{S}$ and $\xb_{S^c}$, we have
\begin{align}
&\EE\insquare{ \exp \inbraces{  \frac{1}{\sigma} \cdot \zb^\top \insquare{(I_n + \Pi) \otimes I_{d} } \xb  }  } = (A) \times (B), \label{eq:truncatedSecondMoment_(A)_and_(B)}
\end{align}
where
\[
(A) = \EE\insquare{ \exp\inbraces{  \frac{1}{2\sigma^2} \xb_S^\top [4 I_k\otimes I_d] \xb_S }  }, \quad\text{and}\quad (B) = \EE\insquare{ \exp\inbraces{  \frac{1}{2\sigma^2} \xb_{S^c}^\top \insquare{\inparen{2 I_{n-k} + \Pi\big|_{S^c} + \Pi\big|_{S^c}^\top } \otimes I_d }\xb_{S^c}  }  }.
\]
By evaluating the $\chi^2$ moment generating function, we obtain 
\[
(A) = \EE \insquare{\exp \frac{2}{\sigma^2} \xb_S^\top \xb_S} = \inparen{ 1 - \frac{4}{\sigma^2}  }^{-\frac{kd}{2}}.
\]
Now for term $(B)$. Suppose that $\Pi\big|_{S^c}$ has cycle decomposition of lengths $\ell_1,\dots,\ell_r$, where $\sum_j \ell_j = n-k$ and $\ell_j \geq 2$. Let $\omega_{\ell_j} = \exp \inparen{ \frac{2\pi \ib}{\ell_j}  }$ be the $\ell_j$-th root of unity. Then we obtain 
\[
(B) = \prod_{j=1}^{r} \prod_{a=0}^{\ell_j - 1} \inparen{ 1 - \frac{1}{\sigma^2} (2 + \omega_{\ell_j}^a + \omega_{\ell_j}^{-a} )  }^{-\frac{d}{2}} \leq \inparen{1 - \frac{4}{\sigma^2}}^{-\frac{d(n-k)}{4}},
\]
where the equality follows as in \eqref{eq:geomMatch_overlapError_V_in_EigvalCirculant}, and the inequality by similar arguments as in \eqref{eq:geomMatch_overlapError_V_finalBound} (where $(B)$ is maximized when the derangement part of $\Pi$ contains a maximal number of two-cycles).

Therefore, combining the bounds for $(A)$ and $(B)$, we have from \eqref{eq:geomMatch_UB_EZsquare_casebLessThanEqual1} that
\begin{align}
    \EE[\cZ^2 \1_{\cE}] &\leq (n!) \sum_{k=0}^{n} \binom{n}{k} (n-k)! \max_{\Pi : \abs{S(\Pi)} = k} \EE \exp \inbraces{  \frac{1}{\sigma} \cdot \zb^\top \insquare{(I_n + \Pi) \otimes I_{d} } \xb  } \nonumber\\
    &\leq (n!)^2 \sum_{k=0}^{n} \frac{1}{k!} \inparen{1 - \frac{4}{\sigma^2}}^{-\frac{d(n+k)}{4}} \nonumber\\
    &\leq (n!)^2 \sum_{k=0}^{n} \exp\inbraces{ - \frac{d(n+k)}{4} \inparen{ -\frac{4b\log n}{d}  +  O\!\inparen{\frac{(\log n)^2}{d^2}} } + k - k \log k  } \nonumber\\
    &= (n!)^2  \exp\inparen{ b n \log n +  O\!\inparen{\frac{n (\log n)^2}{d}} } \sum_{k=0}^{n} \exp\inbraces{  b k \log n + k - k \log k  }.\label{eq:GM_secondMoment_untruncated_sum_bound}
\end{align} 
where in the third line we Taylor expanded $\log (1 - \tfrac{4}{\sigma^2})$ and used $\tfrac{1}{k!} \leq (\tfrac{e}{k})^{k}$. The function $k \mapsto b k \log n + k - k \log k$ is concave and maximized at $k \sim n^b$. The summation above is hence bounded, for $b \leq 1$, by 
\[
\sum_{k=0}^{n} \exp\inbraces{  b k \log n + k - k \log k  } \leq n \exp\inbraces{  b n^b \log n + n^b - n^b \log n^b  } = \exp(o (n \log n)).
\]
Since also $d \gg \log n$, we conclude that 
\[
\EE[\cZ^2 \1_{\cE}] \leq \exp\inparen{ (b + 2)n\log n + o(n \log n)   },
\]
which yields \eqref{eq:GM_truncatedSecondMoment} as desired.

\medskip
\noindent \underline{\textbf{Case:}} $1 < b < 2$.
For such a fixed $b$, choose $0 < \epsilon < 2$ sufficiently small so that
\begin{equation}\label{eq:SM_gamma_b_epsilon}
    \gamma_{b,\epsilon}
    \coloneqq
    -b + 2\sqrt{b}\sqrt{2+\epsilon}
    - \frac{2+\epsilon}{2}
    < 1.
\end{equation}
Such a choice is possible since
$
    \gamma_{b,0}
    =
    -b + 2\sqrt{2b} - 1
    =
    1 - (\sqrt{2}-\sqrt{b})^2
    < 1
$
for every fixed $b<2$. Remark that this is the same $\epsilon$ appearing in \eqref{eq:GM_highProbEvent_def}, and that this is the only point in proof that dictates the choice of this parameter.

Similarly to \eqref{eq:geomMatch_UB_EZsquare_casebLessThanEqual1}, except we do not drop the indicator function, we have
\begin{align}\label{eq:GM_secondMoment_blarger_split}
    \EE[\cZ^2 \1_{\cE}] &\leq (\text{I}) + (\text{II}),
\end{align}
where with the same notation as in \eqref{eq:geomMatch_UB_EZsquare_casebLessThanEqual1},
\begin{align*}
    (\text{I}) &= n! \sum_{k=0}^{k_0} \sum_{\Pi : \abs{S(\Pi)} = k} \EE \insquare{ \exp \inbraces{  \frac{1}{\sigma}  \zb^\top \insquare{(I_n + \Pi) \otimes I_{d} } \xb  } \1_{\cE_k} } \\
    (\text{II}) &= n! \sum_{k=k_0 + 1}^{n} \sum_{\Pi : \abs{S(\Pi)} = k} \EE \insquare{ \exp \inbraces{  \frac{1}{\sigma}  \zb^\top \insquare{(I_n + \Pi) \otimes I_{d} } \xb  } \1_{\cE_k} }.
\end{align*}
For term $(\text{I})$, recall from \eqref{eq:GM_highProbEvent_def} that $k_0=c_0n$. After dropping the factor $\1_{\cE_k}$, we proceed as in the derivation of \eqref{eq:GM_secondMoment_untruncated_sum_bound}, but restrict the outer sum to $0\leq k\leq k_0$. This gives
\begin{align}
    (\text{I})
    &\leq
    n!\sum_{k=0}^{k_0}\binom{n}{k}(n-k)!
    \inparen{1-\frac{4}{\sigma^2}}^{-\frac{d(n+k)}{4}}
    \nonumber\\
    &\leq
    (n!)^2 \sum_{k=0}^{k_0}
    \exp\inbraces{
        b n\log n
        +
        b k\log n
        +
        k-k\log k
        +
        o(n\log n)
    }
    \nonumber\\
    &\leq
    \exp\inbraces{\inparen{b+2+(b-1)c_0+o(1)}n\log n},
    \label{eq:GM_secondMoment_(I)}
\end{align}
where in the last line we used $k\leq k_0=c_0n$, so $bk\log n-k\log k\leq (b-1)c_0 n\log n+O(n)$.

For term $(\text{II})$, we have
\begin{align}\label{eq:GM_secondMoment_TermII}
    (\text{II}) \leq (n!)^2 \sum_{k=k_0 + 1}^{n} \frac{1}{k!} \max_{\Pi : \abs{S(\Pi)} = k} \EE\insquare{ \exp\inparen{\sqrt{b \log n} \tr(I + \Pi)\frac{\Xb \Zb^\top}{\sqrt{d}}} \1_{\cE_k} }
\end{align}
Let $I + \Pi = 2 (I \cap \Pi) + I_{\backslash} + \Pi_{\backslash}$, thus splitting into a common overlapping part where $I$ and $\Pi$ coincide, and two non-overlapping parts. More precisely, $I\cap\Pi=\{(i,i):\Pi_{ii}=1\}$, while $I_{\backslash}$ and $\Pi_{\backslash}$ denote the matched pairs of $I$ and $\Pi$, respectively, with the common part $I\cap\Pi$ removed. We write $(i,i') \in (I \cap \Pi)$ to mean that $(i,i')$ is a matched pair of indices in $I \cap \Pi$ (equivalently the corresponding matrix entry is one); similarly for $I_{\backslash}$ and $\Pi_{\backslash}$. Fix $k_0 + 1 \leq k \leq n$ and any $\Pi$ with $\abs{S(\Pi)} = k$. Define
\[
U_1 = \frac{2}{\sqrt{d}} \sum_{j=1}^{d} \sum_{(i,i')\in (I\cap \Pi)} X_{ij} Z_{i'j}, \qquad  U_2 = \frac{1}{\sqrt{d}} \sum_{j=1}^{d} \sum_{(i,i')\in I_{\backslash}} X_{ij} Z_{i'j}, \qquad U_3 = \frac{1}{\sqrt{d}} \sum_{j=1}^{d} \sum_{(i,i')\in \Pi_{\backslash}} X_{ij} Z_{i'j}.
\]

We note that $U_1$ is indeed independent of $(U_2,U_3)$. The variance of $U_1$ is $4k$. Let
\[
V = U_1/(2\sqrt{k}).
\]
Let $\lambda = 2\sqrt{bk \log n}$ and $t = \sqrt{(2+\epsilon) k \log n}$ and note that $\lambda \asymp t$. Since $\cE_k$ implies $V \leq t$, we have
\begin{equation}\label{eq:GM_secondMoment_TermIIB}
    \EE\insquare{ \exp\inparen{\sqrt{b \log n} \tr(I + \Pi)\frac{\Xb \Zb^\top}{\sqrt{d}}} \1_{\cE_k} } \leq \EE\insquare{\exp\inparen{\lambda V} \1\!\inbraces{V \leq t}} \cdot \EE\insquare{\exp\inparen{\sqrt{b \log n} (U_2 + U_3)}}.
\end{equation}

We consider the term involving $U_1$ first. In the present case, we always have $\lambda \geq t$. Indeed, since $1<b<2$ and $0<\epsilon<2$, we have
$
    \frac{\sqrt{2+\epsilon}}{2}
    <1<\sqrt b.
$

We use a ``peeling argument''. Define $L = \ceil{\log_{1+\iota} t}$, where $\iota > 0$ is a small constant. Thus $(1+\iota)^{L-1} \leq t \leq (1+\iota)^L$. We have
\begin{align}
    &\EE\insquare{\exp\inparen{\lambda V} \1\!\inbraces{V \leq t}} \nonumber \\
    &\quad \leq \EE\insquare{\exp\inparen{\lambda V} \1\!\inbraces{V \leq 1}} + \sum_{\ell = 0}^{L} \EE\insquare{\exp\inparen{\lambda V} \1\!\inbraces{(1+\iota)^{\ell} < V \leq (1+\iota)^{\ell+1}}} \nonumber \\
    &\quad\leq e^{\lambda} + \sum_{\ell=0}^{L-1} \exp\inparen{\lambda(1+\iota)^{\ell+1}} \PP\insquare{V > (1+\iota)^\ell} + \exp\inparen{ \lambda(1+\iota)^{L+1}  } \PP\insquare{ V > t  } \label{eq:geomMatch_U1_peelingArgument}
\end{align}
A sub-Gaussian bound holds for the tail probabilities. To see this, without loss of generality suppose $\inbraces{ (1,1),\dots,(k,k) } = (I \cap \Pi)$. For $s > 0$ and we have
\begin{equation}\label{eq:U1_MGF}
\EE\insquare{\exp sV} = \EE\insquare{\EE\insquare{ \exp \frac{s}{\sqrt{dk}} \sum_{j=1}^{d}\sum_{i=1}^{k} X_{ij} Z_{ij} \,\bigg|\, X }} =  \prod_{j=1}^{d}\prod_{i=1}^{k} \EE\insquare{ \exp \frac{s^2}{2dk} X_{ij}^2  } = \inparen{1 - \frac{s^2}{dk}}^{-\frac{dk}{2}},
\end{equation}
where the last equality is valid whenever $s^2/(2dk) < 1/2$. For $s \lesssim \sqrt{n \log n}$ we have $s^2 \ll dk$ so the equality is valid for large $n$. By an exponential Markov bound, for any $s \lesssim \sqrt{n \log n}$ and for any $a \leq t$,
\begin{align*}
    \PP[V > a] &\leq \exp\inbraces{ -sa - \frac{dk}{2} \log \inparen{ 1 - \frac{s^2}{dk} } } = \exp \inbraces{ -sa + \frac{s^2}{2} + O\!\inparen{  \frac{s^4}{dk}  }  } = \exp\inbraces{ -\frac{a^2}{2} + o(a^2)  },
\end{align*}
where the last line follows by setting $s = a$. Then from \eqref{eq:geomMatch_U1_peelingArgument} we have
\begin{align}
    &\EE\insquare{\exp(\lambda V)\1\{V \leq t\}}
    =
    \EE\insquare{\exp\inparen{\sqrt{b \log n} \, U_1} \1\{V \leq t\}} \nonumber\\
    &\quad \leq e^{\lambda} +\sum_{\ell=0}^{L-1}\exp\inbraces{\lambda(1+\iota)^{\ell+1}-\frac{(1+\iota)^{2\ell}}{2} + o\inparen{(1+\iota)^{2\ell}} } + \exp\inparen{\lambda(1+\iota)^{L+1} - \frac{t^2}{2} + o(t^2)} \nonumber \\
    &\quad \leq e^{\lambda} + (L+1) \exp\inbraces{ \lambda t(1+\iota)^2 - \frac{t^2}{2(1+\iota)^2} + o(t^2)  } \nonumber\\
    &\quad \leq \exp \inbraces{ \lambda t - \frac{t^2}{2} + C\iota n \log n  }  \nonumber\\
    &\quad = \exp\inbraces{ \inparen{2\sqrt{b}\sqrt{2+\epsilon} - \frac{2+\epsilon}{2}} k \log n + C\iota n \log n  }
    \label{eq:GM_U1_blarger}
\end{align}
where in the third line we used that the function $x \mapsto \lambda (1+\iota)^{x+1} - (1+\iota)^{2x}/2$ is increasing in $x$, for $0 \leq x \leq L$.

On the other hand, for the fixed $1<b<2$ under consideration, by the exact same computation as \eqref{eq:truncatedSecondMoment_(A)_and_(B)} term (B),
\begin{equation}\label{eq:GM_U2U3}
    \EE\insquare{\exp\inparen{\sqrt{b \log n} (U_2 + U_3)}} \leq \inparen{1 - \frac{4}{\sigma^2}}^{-\frac{d(n-k)}{4}} = \exp \inbraces{ b(n-k)\log n + o(n \log n)  }.
\end{equation}

Using \eqref{eq:GM_U1_blarger} and \eqref{eq:GM_U2U3}
in \eqref{eq:GM_secondMoment_TermII} and \eqref{eq:GM_secondMoment_TermIIB}, we have
\begin{align*}
    (\text{II}) &\leq (n!)^2\exp\inbraces{(b+C\iota) n \log n}
    \sum_{k=k_0+1}^{n}
    \exp\inbraces{
        \inparen{
            - b
            + 2\sqrt{b}\sqrt{2+\epsilon}
            - \frac{2+\epsilon}{2}
        }
        k \log n
        + k - k \log k
    } \\
    &\leq \exp\inbraces{(b+2+C\iota)n\log n+o(n\log n)}.
\end{align*}
Here the last inequality follows from the choice
$\gamma_{b,\epsilon}<1$ in \eqref{eq:SM_gamma_b_epsilon}, since
\[
    k \mapsto
    \gamma_{b,\epsilon} k \log n + k - k\log k
\]
is maximized at $k^*\sim n^{\gamma_{b,\epsilon}}\ll n$.

From \eqref{eq:GM_secondMoment_blarger_split}, we thus have
\[
\limsup_{n \to \infty} \frac{1}{n \log n} \log \EE[\cZ^2 \1_{\cE}] \leq b + 2 + C\iota + (b-1)c_0
\]
Choosing $c_0$ and $\iota$ sufficiently small gives \eqref{eq:GM_truncatedSecondMoment}.
\end{proof}

\subsubsection{Proof of Theorem \ref{thm:GM_noiseFE_asymptotics}}

The subcritical branch ($b < 2$) of Theorem \ref{thm:GM_noiseFE_asymptotics} will follow from Propositions \ref{prop:GM_ElogZ_UB} and \ref{prop:GM_ElogZ_LB} below, as well as the concentration of the free energy about its expectation in Lemma \ref{lemma:GM_noiseFE_concentration}. The critical and supercritical branch ($b \geq 2$) given in Proposition \ref{prop:GM_noiseFE_supercritical} below.

Our next result gives the upper bound. We remark here that this does not require the second moment machinery.

\begin{proposition}\label{prop:GM_ElogZ_UB}
    For any $b < 2$,
    \begin{equation}
        \limsup_{n \to \infty} \frac{1}{n \log n} \EE \log \cZ \leq \frac{b}{2} + 1.
    \end{equation}
\end{proposition}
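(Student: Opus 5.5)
The plan is to use Jensen's inequality together with the untruncated first moment. Since $\log$ is concave, $\EE\log\cZ\le\log\EE\cZ$. We then compute $\EE\cZ$ exactly, as in the proof of Lemma \ref{lemma:GM_logEZ1cE_UB} with the indicator removed. For every fixed $\Pi$, the quantity $\tr\Pi\Xb\Zb^\top$ has the same law as $\tr\Xb\Zb^\top$, because $\Pi^\top\Zb\overset{d}{=}\Zb$ independently of $\Xb$. Hence
\[
    \EE\cZ = n!\,\EE\exp\inparen{\sqrt{b\log n}\,\tr\frac{\Xb\Zb^\top}{\sqrt d}}.
\]

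Next, condition on $\Xb$. Given $\Xb$, the exponent is Gaussian with variance $\frac{b\log n}{d}\norm{\Xb}_F^2$, so
\[
    \EE\exp\inparen{\sqrt{b\log n}\,\tr\frac{\Xb\Zb^\top}{\sqrt d}}
    = \EE\exp\inparen{\frac{b\log n}{2d}\norm{\Xb}_F^2}.
\]
This is a $\chi^2_{nd}$ moment generating function. Because $b\log n/d\to0$ when $d=\omega(\log n)$, it is finite for large $n$ and equals $(1-b\log n/d)^{-nd/2}$. Using $\log(1-x)=-x+O(x^2)$, we get
\[
    \log\EE\cZ = \log n! + \frac b2 n\log n + O\!\inparen{\frac{n(\log n)^2}{d}}.
\]
Since $\log n! = n\log n + O(n)$ and $n(\log n)^2/d=o(n\log n)$, dividing by $n\log n$ and taking $\limsup$ yields the bound $1+b/2$. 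The restriction $b<2$ plays no role here, as the argument works for every $b>0$.

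There is no real obstacle in this argument. The only point to check is that the moment generating function is finite, i.e.\ that $b\log n/d<1$, which holds eventually. The content of the proposition is that the annealed bound is already sharp in the subcritical regime. The matching lower bound, which does require the conditional second moment, is treated separately.
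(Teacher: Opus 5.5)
Your argument is correct, and it reaches the bound more directly than the paper does. Both proofs rest on the same annealed computation $\EE\cZ = n!\,(1-b\log n/d)^{-nd/2}$; what differs is how $\EE\log\cZ$ is brought to $\log\EE\cZ$.

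The paper does not apply Jensen unconditionally. It splits $\EE\log\cZ = \PP[\cE]\,\EE[\log\cZ\mid\cE] + \EE[\1_{\cE^c}\log\cZ]$ over the truncation event $\cE$ from the second-moment argument. It applies Jensen only to the first piece, obtaining $\PP[\cE]\log\EE[\cZ\1_{\cE}] - \PP[\cE]\log\PP[\cE]$. It then bounds $\EE[\cZ\1_{\cE}]$ by Lemma~\ref{lemma:GM_logEZ1cE_UB}, whose proof simply drops the indicator and does exactly your computation.

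The cost of this detour is that the paper must separately show $\EE[\1_{\cE^c}\log\cZ]=o(n\log n)$. It does so via Cauchy--Schwarz, the Gaussian Poincar\'e bound $\Var\log\cZ\le Cn\log n$, and the a priori estimate $\EE\log\cZ=O(n\log n)$ from \eqref{eq:UncondHighProb_Emax_bound}. Since $\EE[\cZ\1_{\cE}]\le\EE\cZ$ anyway, the truncation gains nothing for the upper bound beyond symmetry with the lower-bound argument, which genuinely needs $\cE$. Your unconditional Jensen step removes all of that bookkeeping.

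You are also right that $b<2$ is never used: the bound $1+b/2$ holds for every $b>0$. It is sharp only up to $b=2$, since $1+b/2-\sqrt{2b}=(1-\sqrt{b/2})^2>0$ for $b>2$. The paper's supercritical upper bound in Proposition~\ref{prop:GM_noiseFE_supercritical} is the fractional-moment refinement of the same idea.
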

\begin{proof}
    Fix any $\epsilon,c_0>0$ and let $\cE=\cE(\epsilon,c_0)$.
    Write
    \begin{align}
        \EE \log \cZ &= \EE[\log \cZ \mid \cE] \ \PP[\cE] + \EE\insquare{ \boldsymbol{1}_{\cE^c} \log \cZ  } \leq \PP[\cE] \ \log \EE[\cZ \mid \cE] + \EE[\boldsymbol{1}_{\cE^c} \log \cZ] \nonumber\\
        &= \PP[\cE] \log \EE[\cZ \boldsymbol{1}_{\cE}] - \PP[\cE] \log \PP[\cE] + \EE\insquare{ \boldsymbol{1}_{\cE^c} \log \cZ }. \label{eq:firstMoment_ElogZ_expansion}
    \end{align}
    The second term in the right-hand side of \eqref{eq:firstMoment_ElogZ_expansion} converges to zero by Lemma \ref{lemma:GM_highProbEvent}. We use Cauchy-Schwarz on the third term to have $\EE\insquare{ \boldsymbol{1}_{\cE^c} \log \cZ } \leq \sqrt{\PP[\cE^c]} \sqrt{\EE\log ^2\cZ}$. We give a bound on $\EE \log^2 \cZ$ now. 
    We first bound the variance of $\log \cZ$. Let $\lambda=\sqrt{b\log n}$ and set $p_{\Pi}=\exp\inbraces{\lambda \tr \Pi \Xb\Zb^\top/\sqrt d}/\cZ$. Then by the Gaussian Poincare inequality (see e.g.~\cite[Corollary~2.27]{vanhandel2016probability}), and direct differentiation, we obtain
    \begin{align*}
        \Var \log \cZ
        &\leq
        \EE\insquare{\norm{\nabla_{\Xb} \log\cZ}_F^2+\norm{\nabla_{\Zb} \log\cZ}_F^2} \\
        &\leq
        \frac{\lambda^2}{d}
        \EE\insquare{
        \sum_{\Pi \in \mathscr{P}_n}p_{\Pi}\norm{\Pi^\top\Zb}_F^2
        +
        \sum_{\Pi \in \mathscr{P}_n}p_{\Pi}\norm{\Pi\Xb}_F^2
        } \\
        &=
        \frac{\lambda^2}{d}\EE\insquare{\norm{\Zb}_F^2+\norm{\Xb}_F^2}
        \leq C n\log n ,
    \end{align*}
    where in the second line we used Jensen's inequality, and in the equality we used the fact that Frobenius norm is preserved under permutation, and the final inequality uses $\EE\norm{\Xb}_F^2=\EE\norm{\Zb}_F^2=nd$. We also have
    \begin{align*}
        \EE \log \cZ &\leq \EE \log \inbraces{n! \max_{\Pi \in \mathscr{P}_n} \exp \inparen{\sqrt{b n \log n} \tr \Pi \frac{\Xb \Zb^\top}{\sqrt{nd}} } } \\
        &= \log n! +  \sqrt{b n \log n} \ \EE \max_{\Pi \in \mathscr{P}_n} \tr \Pi \frac{\Xb \Zb^\top}{\sqrt{nd}}   \\
        &= (1+o(1)) C n \log n,
    \end{align*}
    where the last line is by \eqref{eq:UncondHighProb_Emax_bound}. Thus $\EE \log^2 \cZ = \Var \log \cZ + \inparen{\EE \log \cZ}^2 = O((n \log n)^2)$. Since $\PP[\cE^c] = o(1)$ by Lemma \ref{lemma:GM_highProbEvent}, it follows that $\frac{1}{n \log n}\EE\insquare{ \boldsymbol{1}_{\cE^c} \log \cZ } = o(1)$.

    The result follows from Lemmas \ref{lemma:GM_highProbEvent} and \ref{lemma:GM_logEZ1cE_UB} applied to the first term in \eqref{eq:firstMoment_ElogZ_expansion}.
\end{proof}

\begin{proposition}\label{prop:GM_ElogZ_LB}
    For any $b < 2$,
    \begin{equation}
        \liminf_{n \to \infty} \frac{1}{n \log n} \EE \log \cZ \geq \frac{b}{2} + 1.
    \end{equation}
\end{proposition}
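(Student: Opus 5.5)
The plan is the conditional second moment method via Paley--Zygmund, followed by concentration. Fix $\zeta>0$ and use Lemma \ref{lemma:GM_truncatedSecondMoment} to choose $\epsilon,c_0>0$ so that, with $\cE=\cE(\epsilon,c_0)$,
\[
\limsup_{n\to\infty}\frac{1}{n\log n}\log\EE[\cZ^2\1_\cE]\leq b+2+\zeta .
\]
By Lemma \ref{lemma:GM_logEZ1cE_LB} (valid for any fixed $\epsilon,c_0$), for all large $n$ we also have
\[
\EE[\cZ\1_\cE]\geq \exp\{(1+b/2-\zeta)n\log n\}.
\]

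Apply Paley--Zygmund to the nonnegative variable $\cZ\1_\cE$:
\[
\PP\insquare{\cZ\1_\cE\geq \tfrac12\EE[\cZ\1_\cE]}\geq \frac{(\EE[\cZ\1_\cE])^2}{4\EE[\cZ^2\1_\cE]}\geq \frac14\exp\{-3\zeta n\log n\}
\]
for large $n$. On this event,
\[
\frac{1}{n\log n}\log\cZ\geq 1+\frac b2-\zeta-o(1).
\]
This probability is only $e^{-3\zeta n\log n}$, which is not high probability. It is, however, at the concentration scale, and concentration is what closes the gap.

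For the concentration step, invoke Lemma \ref{lemma:GM_noiseFE_concentration}. If $\frac{1}{n\log n}\EE\log\cZ< 1+b/2-\zeta-t$ for some fixed $t>0$, then the event above forces a deviation of at least $t$. Its probability is then at most $C\exp\{-c\,n\log n\,t^2/b\}$, so we need
\[
\tfrac14 e^{-3\zeta n\log n}\leq Ce^{-c t^2 n\log n/b}.
\]
This fails once $t>\sqrt{3\zeta b/c}$. Hence
\[
\frac{1}{n\log n}\EE\log\cZ\geq 1+\frac b2-\zeta-\sqrt{3\zeta b/c}-o(1).
\]
Take $\liminf_{n}$ and then let $\zeta\downarrow0$; this works because the constants $C,c$ in Lemma \ref{lemma:GM_noiseFE_concentration} are universal and do not depend on $\zeta$.

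The main obstacle is this interplay: Paley--Zygmund yields only an exponentially small (in $n\log n$) lower probability. We need the concentration rate in Lemma \ref{lemma:GM_noiseFE_concentration} to be at the same $n\log n$ scale with $\zeta$-independent constants, so that $t$ can be sent to $0$ together with $\zeta$. One point to check is that the lemma's "fixed $t$, large $n$" quantifier is enough here. It is, since we use it for each fixed pair $(\zeta,t)$ before taking limits.
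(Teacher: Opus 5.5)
Your proposal is correct and follows the paper's argument. Paley--Zygmund applied to the truncated moments from Lemmas \ref{lemma:GM_logEZ1cE_LB} and \ref{lemma:GM_truncatedSecondMoment} gives a lower probability of order $e^{-O(\zeta)n\log n}$, and this is played off against the $n\log n$-scale concentration of Lemma \ref{lemma:GM_noiseFE_concentration} to force $\frac{1}{n\log n}\EE\log\cZ \geq 1+b/2-o_\zeta(1)$. The only differences are cosmetic: you apply Paley--Zygmund directly to $\cZ\1_\cE$ rather than conditionally on $\cE$, and you solve for the deviation $t$ in terms of $\zeta$, whereas the paper fixes the tolerance $\eta$ first and takes $\zeta<c\eta^2/b$.
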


\begin{proof}
    Fix any small constant $\eta>0$, and choose $\zeta>0$ sufficiently small. By Lemma \ref{lemma:GM_truncatedSecondMoment}, choose $\epsilon,c_0>0$ so that \eqref{eq:GM_truncatedSecondMoment} holds with $\zeta/3$ in place of $\zeta$, and let $\cE=\cE(\epsilon,c_0)$. Combining this with Lemma \ref{lemma:GM_logEZ1cE_LB}, for all large $n$,
    \[
        \frac{\EE[\cZ \boldsymbol{1}_{\cE}]^2}{\EE[\cZ^2\boldsymbol{1}_{\cE}]}
        \geq
        \exp\inparen{-\zeta n\log n}.
    \]
    Thus, for any $\epsilon_0>0$, for $n$ large enough, by the Paley-Zygmund inequality,
    \begin{align*}
        &\PP\insquare{ \frac{1}{n\log n} \log \cZ \geq \frac{1}{n\log n} \log \EE[\cZ\boldsymbol{1}_{\cE}] - \epsilon_0 \,\bigg|\, \cE  } = \PP\insquare{ \cZ \geq e^{-\epsilon_0 n \log n} \EE[\cZ\boldsymbol{1}_{\cE}] \,\Big|\, \cE } \\
        &\quad\geq  \PP\insquare{ \cZ \geq \frac{1}{2} \EE[\cZ\boldsymbol{1}_{\cE}] \,\Big|\, \cE } \geq \frac{1}{4} \frac{\EE[\cZ\boldsymbol{1}_{\cE}]^2}{\EE[\cZ^2\boldsymbol{1}_{\cE}]} \geq \exp\inbraces{-\zeta n \log n},
    \end{align*}
    From the concentration of $\log \cZ$ in Lemma \ref{lemma:GM_noiseFE_concentration}, we have for some constants $C,c> 0$,
    \begin{align*}
        &\PP\insquare{ \abs{ \frac{1}{n\log n} \log \cZ - \EE \frac{1}{n\log n} \log \cZ } \leq \eta  } + \PP\insquare{ \frac{1}{n\log n} \log \cZ \geq \frac{1}{n\log n} \log \EE[\cZ\boldsymbol{1}_{\cE}] - \epsilon_0   } \\
        &\quad \geq 1 - C\exp\inbraces{ -c\frac{\eta^2}{b} n \log n  } + \PP[\cE]\exp\inbraces{-\zeta n \log n} > 1,
    \end{align*}
    where the last strict inequality follows by choosing $\zeta < \frac{c\eta^2}{b}$. For any two events $A,B$ we have $\PP\insquare{A\cap B} \geq \PP A + \PP B - 1$. Therefore, the above display implies that the probability of the two events
    \[
    \inbraces{ \abs{ \frac{1}{n\log n} \log \cZ - \EE \frac{1}{n\log n} \log \cZ } \leq \eta  } \qquad\text{and}\qquad \inbraces{ \frac{1}{n\log n} \log \cZ \geq \frac{1}{n\log n} \log \EE[\cZ\boldsymbol{1}_{\cE}] - \epsilon_0  }
    \]
    happening simultaneously is strictly positive. It follows that we must have
    \[
    \frac{1}{n\log n} \log \EE[\cZ \boldsymbol{1}_{\cE}] - \epsilon_0 \leq \frac{1}{n\log n} \EE \log \cZ + \eta,
    \]
    otherwise the two events would be disjoint. The result follows from Lemma \ref{lemma:GM_logEZ1cE_LB} and sending $\eta$ and $\epsilon_0$ to $0$.
\end{proof}

\begin{proposition}\label{prop:GM_noiseFE_supercritical}
For any $b \geq 2$,
\[
    \lim_{n \to \infty} \frac{1}{n\log n} \log \cZ = \sqrt{2b}, \qquad \text{a.s.}.
\]
\end{proposition}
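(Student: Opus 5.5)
We need to prove that for $b\ge 2$, $\frac{1}{n\log n}\log\cZ\to\sqrt{2b}$ almost surely. By Lemma \ref{lemma:GM_noiseFE_concentration} and Borel--Cantelli, it suffices to show $\frac{1}{n\log n}\EE\log\cZ\to\sqrt{2b}$. Write $\phi_n(b)=\frac{1}{n\log n}\EE\log\cZ$, viewed as a function of $\beta=\sqrt b$. It is convenient to set $\cZ(\beta)=\sum_\Pi \exp\{\beta\sqrt{\log n}\,\tr\Pi W\}$ with $W=\Xb\Zb^\top/\sqrt d$. Then $\beta\mapsto\log\cZ(\beta)$ is convex, being a log-sum-exp of linear functions of $\beta$. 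Hence $g_n(\beta):=\phi_n(\beta^2)$ is convex in $\beta$.

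\textbf{Upper bound.} First, $\log\cZ\le\log n!+\sqrt{b\log n}\max_\Pi\tr\Pi W$. By \eqref{eq:UncondHighProb_Emax_bound} with $k=n$, $\EE\max_\Pi \tr\Pi W/\sqrt n\le(1+o(1))\sqrt{2\log n!}=(1+o(1))\sqrt{2n\log n}$. This gives $g_n(\beta)\le 1+\sqrt2\beta+o(1)$. That bound is only sharp at $\beta=\sqrt2$, where it equals $1+b/2$.

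To reach the claimed value I would instead use a fractional moment bound. For $\theta\in(0,1]$, Jensen gives $\EE\log\cZ\le\frac1\theta\log\EE\cZ^\theta$. Using $(\sum a_\Pi)^\theta\le\sum a_\Pi^\theta$, we get $\EE\cZ^\theta\le n!\,\EE e^{\theta\beta\sqrt{\log n}\tr W}=n!\exp\{(\theta^2 b/2+o(1))n\log n\}$, by the same MGF computation as Lemma \ref{lemma:GM_logEZ1cE_UB}. Therefore $g_n(\beta)\le \frac1\theta+\frac{\theta b}{2}+o(1)$. Optimizing at $\theta=\sqrt{2/b}\le1$, which is allowed since $b\ge2$, yields $\limsup g_n\le\sqrt{2b}$.

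\textbf{Lower bound via convexity.} Propositions \ref{prop:GM_ElogZ_UB} and \ref{prop:GM_ElogZ_LB} give $g_n(\beta)\to 1+\beta^2/2$ for every $\beta<\sqrt2$. By convexity, for $\beta_1<\beta_0<\sqrt2\le\beta$,
\[
g_n(\beta)\ \ge\ g_n(\beta_0)+\frac{g_n(\beta_0)-g_n(\beta_1)}{\beta_0-\beta_1}(\beta-\beta_0).
\]
Letting $n\to\infty$, the difference quotient converges to $\frac{(\beta_0^2-\beta_1^2)/2}{\beta_0-\beta_1}=(\beta_0+\beta_1)/2$. Now send $\beta_1\uparrow\beta_0$ and then $\beta_0\uparrow\sqrt2$. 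This gives $\liminf g_n(\beta)\ge 2+\sqrt2(\beta-\sqrt2)=\sqrt2\beta=\sqrt{2b}$.

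Combining the two bounds gives $g_n(\beta)\to\sqrt{2b}$, and concentration upgrades this to almost sure convergence. The main point needing care is the convexity step. Convexity must be applied to the expected free energy as a function of $\beta$ at fixed $n$; this holds because the expectation of convex functions is convex. Uniform limits on an open interval then pass to the tangent-line bound.

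A secondary point is checking that the fractional MGF computation is valid uniformly. It requires $\theta^2 b\log n/d\to0$, which holds since $d=\omega(\log n)$. The case $b=2$ is covered by both bounds, since there $\theta=1$ and the tangent at $\sqrt2$ coincides with the value $2$.
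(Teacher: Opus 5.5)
Your proposal is correct and is essentially the paper's argument. Both use the same two ingredients: a tangent-line lower bound from convexity in $\sqrt b$ combined with the subcritical limit $1+b/2$, and the fractional-moment bound $\EE\cZ^\theta\le n!\,\EE e^{\theta\sqrt{b\log n}\,\tr W}$ optimized at $\theta=\sqrt{2/b}$. The difference is only cosmetic: you work with $\EE\log\cZ$ (Jensen, convexity of the expectation, then Lemma~\ref{lemma:GM_noiseFE_concentration} with Borel--Cantelli), whereas the paper argues pathwise via Markov/Borel--Cantelli and the almost-sure subcritical limit at rational points.

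One small slip, in an aside you do not use: at $\beta=\sqrt2$ the crude bound $1+\sqrt2\beta$ equals $3$, not $1+b/2=2$. Nothing in the argument depends on it.
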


\begin{proof}
For $s > 0$, define
\[
    \cZ_n(s)
    \coloneqq
    \sum_{\Pi}
    \exp\inbraces{
    s\sqrt{\log n}\tr \Pi\frac{\Xb\Zb^\top}{\sqrt d}
    },
    \qquad
    f_n(s)
    \coloneqq
    \frac{1}{n\log n}\log \cZ_n(s).
\]
Thus $\cZ=\cZ_n(\sqrt b)$. For every $n$, the function $s\mapsto f_n(s)$ is convex. Fix $s \geq \sqrt 2$ and rational numbers $0<q<r<\sqrt 2$. Convexity gives
\[
    f_n(s)
    \geq
    f_n(r)
    +(s-r)\frac{f_n(r)-f_n(q)}{r-q}.
\]
For the countable set of positive rational $q,r<\sqrt 2$, the already established subcritical branch holds simultaneously almost surely. On this event, taking the limit inferior in the previous display gives
\[
    \liminf_{n\to\infty} f_n(s)
    \geq
    1+\frac{r^2}{2}
    +
    (s-r)\frac{r+q}{2}.
\]

Sending rational $q,r\uparrow\sqrt 2$, we obtain
\begin{equation}\label{eq:GM_noiseFE_supercritical_LB}
    \liminf_{n\to\infty}f_n(s)\geq \sqrt 2\,s,
    \qquad s\geq\sqrt 2,
    \quad\text{a.s.}
\end{equation}

For the matching upper bound, fix $s\geq\sqrt 2$ and $0<\theta\leq 1$. The subadditivity of $x\mapsto x^\theta$ on $\RR_+$ yields
\begin{align*}
    \EE \cZ_n(s)^\theta
    &\leq
    \sum_{\Pi}
    \EE\exp\inbraces{
    \theta s\sqrt{\log n}\tr \Pi\frac{\Xb\Zb^\top}{\sqrt d}
    } \\
    &=
    n!
    \inparen{1-\frac{\theta^2s^2\log n}{d}}^{-\frac{nd}{2}} \\
    &=
    \exp\inbraces{
    \inparen{1+\frac{\theta^2s^2}{2}+o(1)}n\log n
    },
\end{align*}
where the last line uses $d=\omega(\log n)$. Hence, for any fixed $\eta>0$, Markov's inequality gives
\[
    \PP\insquare{
    f_n(s)
    \geq
    \frac{1}{\theta}+\frac{\theta s^2}{2}+\eta
    }
    \leq
    \exp\inbraces{
    -\theta\eta n\log n+o(n\log n)
    }.
\]
The right-hand side is summable in $n$. By Borel-Cantelli, sending $\eta\downarrow 0$, and then taking $\theta=\sqrt 2/s$,
\begin{equation}\label{eq:GM_noiseFE_supercritical_UB}
    \limsup_{n\to\infty}f_n(s)
    \leq
    \sqrt 2\,s,
    \qquad s\geq\sqrt 2,
    \quad\text{a.s.}
\end{equation}
Combining \eqref{eq:GM_noiseFE_supercritical_LB} and \eqref{eq:GM_noiseFE_supercritical_UB}, and substituting $s=\sqrt b$, proves the supercritical branch of \eqref{eq:GM_noiseFE_asymptotics}.
\end{proof}

\subsection{Proof of Corollary \ref{cor:GM_infoTheoretic_nothing}}
\label{sec:GM_fullFE_proofs}
\label{sec:GM_infoTheoretic_nothing_proofs}

\begin{proof}[Proof of Corollary \ref{cor:GM_infoTheoretic_nothing}(i)]
    We first show that the MMSE is equal to half of the expected error in using a posterior sample as an estimator: for $\Pi \sim \inangle{\cdot}$,
    \begin{equation}
        \EE\inangle{\norm{\Pi - \Pi_*}^2_F} = 2 \EE\norm{  \inangle{\Pi} - \Pi_*  } ^2_F.
    \end{equation}
    To see this, expand the left-hand side into $\EE \inangle{ \norm{\Pi}^2_F  } + \EE\norm{\Pi_*}^2_F - 2 \EE{\tr \inangle{\Pi} \Pi_*^\top}$. Note that $\EE \inangle{ \norm{\Pi}^2_F  } = \EE\norm{\Pi_*}^2_F = n$ because the marginal distribution of $\Pi \sim \inangle{\cdot}$ is equal to that of $\Pi_*$. Furthermore, $\EE{\tr \inangle{\Pi} \Pi_*^\top} = \EE\norm{\inangle{\Pi}}^2_F$ since $\Pi$ and $\Pi_*$ are conditionally independent given $(\Xb,\Yb)$ (this is also the Nishimori identity from statistical physics \cite[Equation 15]{zdeborova2016statistical}.)

    Thus the left-hand side is $2(n - \EE\norm{\inangle{\Pi}}^2_F)$.

    On the other hand, $\EE\norm{  \inangle{\Pi} - \Pi_*  } ^2_F = \EE\norm{\inangle{\Pi}}^2_F + \EE\norm{\Pi_*}^2_F - 2 \EE\tr \inangle{\Pi}\Pi_*^\top$. The cross term is equal to $-2\EE\norm{\inangle{\Pi}}^2_F$ as before. Thus $\EE\norm{  \inangle{\Pi} - \Pi_*  } ^2_F = n - \EE\norm{\inangle{\Pi}}^2_F$.

    It remains to show that the expected overlap of a posterior sample with the truth is small. Assume without loss that $\delta \in (0,1)$. Let $B = \tr \Pi_* \Pi^\top$ and fix $\eta = \delta/2$. Since $0 \leq B \leq n$,
    \[
        \frac{B}{n} \leq \eta + \boldsymbol{1}\!\inbraces{B \geq \eta n}.
    \]
    By Theorem \ref{thm:GM_AoN}, for all sufficiently large $n$,
    $
        \EE\inangle{\boldsymbol{1}\!\inbraces{B \geq \eta n}} \leq \frac{\delta}{2}.
    $
    Taking expectation with respect to the posterior and then the observation, we obtain
    \[
        \frac{1}{n}\EE\inangle{\tr \Pi_* \Pi^\top} \leq \eta + \frac{\delta}{2} = \delta.
    \]
    Therefore
    $
        \EE\inangle{\norm{\Pi - \Pi_*}^2_F}
        = 2n - 2\EE\inangle{\tr \Pi_* \Pi^\top}
        \geq 2(1-\delta)n.
    $
    By the identity above, for all sufficiently large $n$,
    $
        \MMSE(b) \geq (1-\delta)n.
    $
    Since $\delta>0$ is arbitrary, $\liminf_n \MMSE(b)/n\geq 1$. The dummy estimator $\EE\Pi_*=\frac1n\boldsymbol{1}\boldsymbol{1}^{\top}$ gives $\MMSE(b)\leq n-1$, and hence $\MMSE(b)/n\to 1$.
\end{proof}

\begin{proof}[Proof of Corollary \ref{cor:GM_infoTheoretic_nothing}(ii)]
    Put $\widetilde{\Yb}=\Yb/\sigma=\sqrt{b\log n/d}\,\Pi_*\Xb+\Zb$. Define
    \[
        i_n(b)
        =
        \frac{1}{n\log n}I(\Pi_*;\widetilde{\Yb}\mid \Xb).
    \]
    Expanding the Gaussian likelihood and using $\EE\norm{\Xb}_F^2=nd$ gives
    \[
        i_n(b)
        =
        \frac{\log n!}{n\log n}
        +
        b
        -
        \EE\widetilde{\Psi}_{\sigma}.
    \]
    By Proposition \ref{prop:GM_full_free_energy_subcritical}, $i_n(b)\to b/2$ for every fixed $0<b<2$.

    For each $n$, $i_n$ is concave and differentiable in the SNR parameter $b$, and the Gaussian I-MMSE identity \cite{guo2005mutual} gives
    \[
        i_n'(b)
        =
        \frac{1}{2nd}\YMMSE(b).
    \]
    Since the pointwise limit $b/2$ is differentiable on $(0,2)$, the standard derivative-convergence property for concave functions yields $i_n'(b)\to1/2$ at every fixed $0<b<2$. The claim follows.
\end{proof}

\section{Proofs for the multi-view positive result}
\label{sec:Kway_positive_AER_proofs}

The starting point of our analysis is a reduction: we may assume without loss that
\begin{equation}\label{eq:Kway_identity_assumption}
    \Pi_*^{(a)}=I,
    \qquad a=1,\dots,K-1.
\end{equation}
Indeed, conditionally on $\bar{\Pi}_*^{(0)},\dots,\bar{\Pi}_*^{(K-1)}$, left multiplying the $a$-th view by $(\bar{\Pi}_*^{(a)})^\top$ is, by Gaussian orthogonal invariance, only relabeling the observed rows of $\Yb^{(a)}$.
The algorithms below are equivariant under such row relabelings, so their overlap with the relative matchings has the same distribution as their overlap with the identity in the relabeled model. In what follows, we will always assume that \eqref{eq:Kway_identity_assumption} is in force.

\begin{remark}\label{remark:Kview_TwoView_Mapping}
The following is a justification that under the mapping \eqref{eq:multi_twoView_Mapping}, i.e.~$\sigma^2 \sim \tau^4$,
the model \eqref{eq:KwayMatching_def} is asymptotically equivalent to a $K$-view extension of the setting \eqref{eq:GM_def}, which would be a $2$-view. Indeed, in setting \eqref{eq:GM_def}, after standardizing, and with the analogous reduction to $\Pi_*=I$, we observe
\begin{align*}
    \Xb \qquad\text{and}\qquad \frac{\Yb}{\sqrt{1+\sigma^2}} = \frac{\Xb}{\sqrt{1 + \sigma^2}} + \frac{\sigma}{\sqrt{1 + \sigma^2}} \Zb.
\end{align*}
On the other hand, under \eqref{eq:Kway_identity_assumption}, in \eqref{eq:KwayMatching_def} we equivalently observe the standardized
\begin{align*}
    \frac{\Yb^{(0)}}{\sqrt{1+\tau^2}} &= \frac{\Xb}{\sqrt{1+\tau^2}} + \frac{\tau}{\sqrt{1+\tau^2}} \Wb^{(0)} \qquad\text{and}\qquad \frac{\Yb^{(1)}}{\sqrt{1+\tau^2}} &= \frac{\Xb}{\sqrt{1+\tau^2}} + \frac{\tau}{\sqrt{1+\tau^2}} \Wb^{(1)}.
\end{align*}
Let $\widetilde{\Yb} = \Yb/\sqrt{1 + \sigma^2}$ and $\widetilde{\Yb}^{(a)} = \Yb^{(a)}/\sqrt{1+\tau^2}$. The pairs $\inparen{\Xb,\widetilde{\Yb}}$ and $\inparen{\widetilde{\Yb}^{(0)},
\widetilde{\Yb}^{(1)}}$ each have standard multivariate Gaussian marginals. The unmatched rows among each pair have zero covariance. The matched rows among the first pair have covariance $\Cov\big[ X_i, \widetilde{Y}_i  \big] = \frac{1}{\sqrt{1+\sigma^2}}I_d$, and among the second pair this is $\Cov\big[\widetilde{Y}^{(0)}_i, \widetilde{Y}^{(1)}_i\big] = \frac{1}{1+\tau^2}I_d$. Thus their laws agree when $\sigma^2=2\tau^2+\tau^4 \sim \tau^4$.
\end{remark}

Notice that the score $T_{\ib}$ corresponding to the true tuple $(i,\dots,i)$ has mean $C_K d$, where we denote
\[
    C_K=\binom{K}{2}.
\]

Evidently, Algorithm \ref{alg:Kway_general_rowwise_thresholding} succeeds when the score $T_{i,\dots,i}$ defined in \eqref{eq:KwayMatching_Ti_def} exceeds all other scores. We now record a moderate-deviation estimate for a general score $T_{\ib}$ that will be used in the proof. Consider a candidate tuple $\mathbf{i}=(i_0,\dots,i_{K-1})\in[n]^K$. Let $u_0(\mathbf{i})=i_0$, and write
\[
    \inbraces{u_1(\mathbf{i}),\dots,u_{r(\mathbf{i})}(\mathbf{i})}
    =
    \inbraces{i_0,\dots,i_{K-1}}\setminus\inbraces{i_0}
\]
for the set of distinct selected labels different from $i_0$. For $s=0,\dots,r(\mathbf{i})$, define
\begin{equation}\label{eq:Kway_multiplicity_notation}
    S(\mathbf{i})=\sum_{s=0}^{r(\mathbf{i})}\binom{\ell_s(\mathbf{i})}{2},
    \qquad\text{where}\qquad
    \ell_s(\mathbf{i})=\abs{\inbraces{a\in\inbraces{0,\dots,K-1}:i_a=u_s(\mathbf{i})}}.
\end{equation}
Thus $S(\mathbf{i})$ counts the pairs of views in the tuple $\mathbf{i}$ that select the same row label. Under \eqref{eq:Kway_identity_assumption}, it is exactly these pairs which contribute mean $d$ to $T_{\mathbf{i}}$, while pairs with different labels have mean zero; hence $\EE T_{\mathbf{i}}=dS(\mathbf{i})$.

\begin{lemma}\label{lemma:T_i_concentrates_on_mean}
For every fixed $\eta>0$, uniformly over $\mathbf{i}\in[n]^K$,
\begin{equation}\label{eq:Kway_score_moderate_deviation}
    \PP\insquare{T_{\mathbf{i}}\geq dS(\mathbf{i})+\eta d}
    \leq
    \exp\inbraces{
    -\inparen{\frac{\eta^2}{2C_K}+o(1)}
    \frac{d}{\tau^4}
    }.
\end{equation}
\end{lemma}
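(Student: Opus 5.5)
The plan is a Chernoff bound for a sum of $d$ i.i.d.\ Gaussian quadratic forms of fixed dimension. Under \eqref{eq:Kway_identity_assumption}, $Y^{(a)}_{i_a}=X_{i_a}+\tau W^{(a)}_{i_a}$. So coordinate $j$ of $T_{\ib}$ depends only on the Gaussian vector
\[
g_j=\bigl(X_{u_0 j},\dots,X_{u_r j},W^{(0)}_{i_0 j},\dots,W^{(K-1)}_{i_{K-1} j}\bigr)\in\RR^{r+1+K}.
\]
Its entries are i.i.d.\ standard Gaussian, since the rows $W^{(a)}_{i_a}$ come from independent matrices. Hence $T_{\ib}=\sum_{j=1}^d g_j^\top A_{\ib} g_j$ for a fixed symmetric matrix $A_{\ib}$, whose entries are $O(1)$ on the $X$-block, $O(\tau)$ on the $X$--$W$ block and $O(\tau^2)$ on the $W$-block. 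The matrix $A_{\ib}$ depends on $\ib$ only through the multiplicity pattern $\boldsymbol\ell$. There are only $O_K(1)$ such patterns, so uniformity over $\ib$ will be automatic once the estimate holds for each pattern.

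Step 1 is the exact log-MGF. By Markov's inequality with parameter $\theta>0$,
\[
\PP[T_{\ib}\ge dS(\ib)+\eta d]\le \exp\bigl\{-d\bigl(\theta(S(\ib)+\eta)-\Lambda(\theta)\bigr)\bigr\},
\]
where $\Lambda(\theta)=-\tfrac12\log\det(I-2\theta A_{\ib})$ and $\theta<1/(2\lambda_{\max}(A_{\ib}))$. Step 2 expands this in cumulants. Note $\tr A_{\ib}=\EE g^\top A_{\ib} g=S(\ib)$ and $\norm{A_{\ib}}_{\mathrm{op}}=O_K(\tau^2)$. Then
\[
\Lambda(\theta)=\theta S(\ib)+\theta^2\norm{A_{\ib}}_F^2+O_K\!\bigl(\theta^3\tau^6\bigr)
\]
whenever $\theta\tau^2$ is small.

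Step 3, which I expect to be the main computational point, is to identify $2\norm{A_{\ib}}_F^2=\Var(g^\top A_{\ib} g)$ to leading order. The $\tau^2$ terms are the products $\tau^2 W^{(a)}_{i_a j}W^{(b)}_{i_b j}$ over the $C_K$ pairs $a<b$. These are mutually uncorrelated, each with variance $\tau^4$. All remaining terms are $X\cdot X$ or $\tau X\cdot W$ products, contributing $O_K(\tau^2)$ to the variance, cross terms included. Hence $\norm{A_{\ib}}_F^2=\tfrac12 C_K\tau^4(1+O(\tau^{-2}))$, uniformly over patterns.

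Step 4 optimizes. Take $\theta=\eta/(C_K\tau^4)$. Then $\theta\tau^2=O(\tau^{-2})\to0$, because $\tau^4=d/(b\log n)\to\infty$. So $\theta$ is admissible and the cubic error is $O(\theta^3\tau^6)=o(\tau^{-4})$. The exponent becomes
\[
d\Bigl(\theta\eta-\tfrac12\theta^2C_K\tau^4(1+o(1))-o(\tau^{-4})\Bigr)=\Bigl(\frac{\eta^2}{2C_K}+o(1)\Bigr)\frac{d}{\tau^4},
\]
which is \eqref{eq:Kway_score_moderate_deviation}. The only care needed is to keep the $o(1)$ terms uniform. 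This follows because the spectral bounds on $A_{\ib}$ depend only on $K$ and $\tau$, and the pattern set is finite.
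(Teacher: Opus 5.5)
Your proposal is correct and matches the paper's proof essentially step for step: a Chernoff bound for a sum of $d$ i.i.d.\ fixed-dimensional Gaussian quadratic forms, a second-order expansion of $-\tfrac12\log\det(I-2\theta\,\cdot)$ with cubic remainder $O_K(\theta^3\tau^6)$, leading variance $C_K\tau^4$ coming from the noise--noise products, and the choice $\theta=\eta/(C_K\tau^4)$. The only differences are cosmetic. You write the form in the whitened i.i.d.\ coordinates $(X,W)$, whereas the paper uses $B=\Sigma^{1/2}A\Sigma^{1/2}$ for the correlated vector $G_h\in\RR^K$; the two matrices have the same nonzero spectrum. You obtain uniformity from the finiteness of label patterns, whereas the paper uses operator-norm bounds uniform in $\mathbf{i}$; either works.
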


\begin{proof}
Fix a tuple $\mathbf{i}\in[n]^K$. 
For each $h=1,\dots,d$, set
\[
    G_h=\inparen{Y_{i_0,h}^{(0)},\dots,Y_{i_{K-1},h}^{(K-1)}}\in\RR^K.
\]
Then $G_1,\dots,G_d$ are i.i.d.~centered Gaussian vectors. Let $A \in \RR^{K \times K}$ be the symmetric matrix which has zeros on the diagonal and $1/2$ in every other entry. Then we may express $T_{\mathbf{i}}$ as
\[
    T_{\mathbf{i}}=\sum_{h=1}^{d}G_h^\top A G_h.
\]
We analyze one summand. Write $G$ for a generic copy of $G_h$. Its covariance matrix has the form
\[
    \Sigma=\tau^2 I_K+R,
\]
where $R_{ab}=1$ if $i_a=i_b$, and $R_{ab}=0$ otherwise. In particular, $\norm{R}_{\mathrm{op}}=O_K(1)$ and $\norm{\Sigma}_{\mathrm{op}}=O_K(\tau^2)$, uniformly over $\mathbf{i}$. Thus one coordinate of the score is
\[
    U=G^\top A G,
    \qquad
    \EE U=\tr(A\Sigma)=S(\mathbf{i}).
\]
For a centered Gaussian quadratic form,
$
    \Var(U)=2\tr\inparen{(A\Sigma)^2}.
$
Using $\Sigma=\tau^2 I_K+R$ and $2\tr(A^2)=C_K$, we obtain that uniformly over $\mathbf{i}$,
\[
    \Var(U)
    =
    2\tau^4\tr(A^2)+O_K(\tau^2)
    =
    (C_K+o(1))\tau^4.
\]

We now bound the centered moment generating function. Put $B=\Sigma^{1/2}A\Sigma^{1/2}$, and let $\theta_1,\dots,\theta_K$ be its eigenvalues. Since $\norm{B}_{\mathrm{op}}=O_K(\tau^2)$, for $\lambda=O(\tau^{-4})$ and all sufficiently large $\tau$ we have $\abs{2\lambda\theta_m}\leq 1/2$. Writing $G=\Sigma^{1/2}g$ with $g\sim\cN(0,I_K)$, diagonalizing $B$, and using the same chi-squared moment generating function as in \eqref{eq:geomMatch_overlapError_V_in_EigvalCirculant} gives 
\[
    \log \EE e^{\lambda(U-\EE U)}
    =
    -\lambda\tr B-\frac{1}{2}\log\det(I_K-2\lambda B)
    =
    \sum_{m=1}^{K}
    \inparen{
    -\lambda\theta_m-\frac{1}{2}\log(1-2\lambda\theta_m)
    }.
\]
Expanding the logarithm uniformly in $m$,
\[
    \log \EE e^{\lambda(U-\EE U)}
    =
    \lambda^2\sum_{m=1}^{K}\theta_m^2
    +O_K\inparen{\lambda^3\norm{B}_{\mathrm{op}}^3}
    =
    \frac{\Var(U)}{2}\lambda^2+O_K(\tau^6\lambda^3).
\]
Applying exponential Markov's inequality to $T_{\mathbf{i}}-dS(\mathbf{i})=\sum_{h=1}^{d}(U_h-\EE U_h)$, where the $U_h$'s are independent copies of $U$,  with $\lambda=\eta/(C_K\tau^4)$, we obtain
\[
    \log \PP\insquare{T_{\mathbf{i}}\geq dS(\mathbf{i})+\eta d}
    \leq
    -\lambda \eta d
    +d\inbraces{\frac{\Var(U)}{2}\lambda^2+O_K(\tau^6\lambda^3)}
    =
    -\inparen{\frac{\eta^2}{2C_K}+o(1)}\frac{d}{\tau^4}.
\]
This proves \eqref{eq:Kway_score_moderate_deviation} and finishes the proof.
\end{proof}

\begin{proof}[Proof of Theorem \ref{thm:Kway_positive_AER}]
Recall that it suffices to prove the result under \eqref{eq:Kway_identity_assumption}. Define
\[
    \cB
    =
    \inbraces{
    i\in[n]:
    \inparen{j_1^\star(i),\dots,j_{K-1}^\star(i)}
    \neq
    (i,\dots,i)
    }.
\]
Thus $\cB$ is the set of rows $i$ in view $0$ for which the row-wise maximization step in Algorithm \ref{alg:Kway_general_rowwise_thresholding} made at least one mistake among the $K-1$ other views. For each $i$, define the event
\[
    \cE_i
    =
    \inbraces{
    T_{i,\dots,i}
    \leq
    \max_{\substack{(j_1,\dots,j_{K-1})\in[n]^{K-1}\\(j_1,\dots,j_{K-1})\neq(i,\dots,i)}}
    T_{i,j_1,\dots,j_{K-1}}
    }.
\]
If $i\in\cB$, then a non-identity tuple was selected by the row-wise maximization, and hence $\cE_i$ occurs. Thus, by linearity of expectation and symmetry,
$
    \EE\abs{\cB}\leq n\PP(\cE_1),
$
and we claim that it suffices to show
\begin{equation}\label{eq:KwayMatching_PE1_goal}
\PP\cE_1=o(1).
\end{equation}

Indeed, we would have $\EE\abs{\cB}=o(n)$. Markov's inequality then gives $\abs{\cB}=o_{\PP}(n)$. Fix a view $a$ and define $G_a=\{i:j_a^\star(i)=i\}$. Thus $G_a$ is the set of rows of view $0$ whose row-wise choice in view $a$ is correct. Since $[n]\setminus\cB\subseteq G_a$, we have $\abs{G_a}\geq n-\abs{\cB}$. The rows in $G_a$ propose the correct diagonal matches $(i,i)$ for $\widehat{\Pi}^{(a)}$. In the second stage of Algorithm \ref{alg:Kway_general_rowwise_thresholding}, such a proposed diagonal match can fail to be kept only if some row $i'\in[n]\setminus G_a$ also proposes row $i$ in view $a$, i.e.~$j_a^\star(i')=i$. Hence at most $n-\abs{G_a}$ of these proposed diagonal matches can fail to be kept, and therefore
\[
    \tr\widehat{\Pi}^{(a)}
    \geq
    \abs{G_a}-(n-\abs{G_a})
    =
    2\abs{G_a}-n
    \geq
    n-2\abs{\cB}.
\]
This yields $\tr \widehat{\Pi}^{(a)} / n \to 1$ in probability, and hence almost exact recovery \eqref{eq:KwayMatching_AER_statement} under \eqref{eq:Kway_identity_assumption}.

We now prove \eqref{eq:KwayMatching_PE1_goal}. Recall that $C_K=\binom{K}{2}$, and fix a small $\epsilon>0$, to be chosen below. If $\cE_1$ occurs and $T_{1,\dots,1}>(C_K-\epsilon)d$, then some non-identity tuple has score at least $(C_K-\epsilon)d$. Hence
\begin{equation}\label{eq:Kway_E1_decomposition}
    \PP(\cE_1)
    \leq
    \PP\insquare{T_{1,\dots,1}\leq (C_K-\epsilon)d}
    +
    \PP\insquare{
    \max_{\substack{\mathbf{i}=(1,i_1,\dots,i_{K-1})\\\mathbf{i}\neq(1,\dots,1)}}
    T_{\mathbf{i}}\geq (C_K-\epsilon)d
    }.
\end{equation}

We first control the first term in \eqref{eq:Kway_E1_decomposition}. The true score satisfies
\[
    \EE T_{1,\dots,1}=C_K d,
    \qquad
    \Var(T_{1,\dots,1})=O_K(d\tau^4).
\]
Since $d/\tau^4\gtrsim \log n$, Chebyshev's inequality gives
\begin{equation}\label{eq:Kway_true_score_LB}
    \PP\insquare{T_{1,\dots,1}> (C_K-\epsilon)d}=1 - o(1).
\end{equation}

We next control the second term in \eqref{eq:Kway_E1_decomposition}. For any non-identity $\mathbf{i}=(1,i_1,\dots,i_{K-1})$, we have $\EE T_{\mathbf{i}}=dS(\mathbf{i})$, while the true mean is $C_Kd$. Define the gap to be
\[
D(\mathbf{i})=C_K-S(\mathbf{i}).
\]
For this $\mathbf{i}$, the event
$
    \inbraces{T_{\mathbf{i}}\geq (C_K-\epsilon)d}
$
requires a deviation of size $(D(\mathbf{i})-\epsilon)d$ above its mean. Lemma \ref{lemma:T_i_concentrates_on_mean} controls this as
\[
    \PP\insquare{T_{\mathbf{i}}\geq (C_K-\epsilon)d}
    \leq
    \exp\inbraces{
    -\inparen{\frac{(D(\mathbf{i})-\epsilon)^2}{2C_K}+o(1)}
    \frac{d}{\tau^4}
    }.
\]
We now sum this bound over all non-identity tuples anchored at $1$. Suppose a tuple uses $r$ distinct row labels other than the anchor label $1$, and fix the multiplicities $\boldsymbol{\ell}=(\ell_0,\dots,\ell_r)$ with which the $K$ views select these $r+1$ labels, as in \eqref{eq:Kway_multiplicity_notation}. Write
\[
    D_{\boldsymbol{\ell}}
    =
    C_K-\sum_{s=0}^{r}\binom{\ell_s}{2}.
\]
For every tuple $\mathbf{i}$ with these multiplicities, $D(\mathbf{i})=D_{\boldsymbol{\ell}}$, and there are at most $C'_K n^r$ such tuples.
Taking a union bound over $r$ and over the finitely many possible multiplicities $\boldsymbol{\ell}$ gives
\begin{equation}\label{eq:Kway_wrong_tuple_union_bound}
    \PP\insquare{
    \max_{\substack{\mathbf{i}=(1,i_1,\dots,i_{K-1})\\\mathbf{i}\neq(1,\dots,1)}}
    T_{\mathbf{i}}\geq (C_K-\epsilon)d
    }
    \leq
    \sum_{r=1}^{K-1}
    C'_K \cdot n^r
    \max_{\substack{\ell_0+\cdots+\ell_r=K\\ \ell_s\geq 1\ \forall\, 0\leq s\leq r}}
    \exp\inbraces{
    -\inparen{\frac{(D_{\boldsymbol{\ell}}-\epsilon)^2}{2C_K}+o(1)}
    \frac{d}{\tau^4}
    }.
\end{equation}

It remains to verify that the right-hand side of \eqref{eq:Kway_wrong_tuple_union_bound} is $o(1)$. The most dangerous terms in the union bound arise from multiplicities making $D_{\boldsymbol{\ell}}$ small relative to the number $r$ of free labels. Indeed, the factor $n^r$ counts the choices of these labels, while the deviation bound improves as the mean gap $D_{\boldsymbol{\ell}}d$ grows. To minimize $D_{\boldsymbol{\ell}}$, we equivalently maximize $\sum_s\binom{\ell_s}{2}$ over the constraints. If two multiplicities satisfy $1<\ell_s\leq \ell_t$, then replacing $(\ell_s,\ell_t)$ by $(\ell_s-1,\ell_t+1)$ changes this sum by
\[
    \binom{\ell_s-1}{2}+\binom{\ell_t+1}{2}
    -\binom{\ell_s}{2}-\binom{\ell_t}{2}
    =
    \ell_t-\ell_s+1
    \geq 1.
\]
Thus the sum is maximized by concentrating any excess mass in one multiplicity. Hence, for fixed $r$, $D_{\boldsymbol{\ell}}$ is minimized by a permutation of $(K-r,1,\dots,1)$, and
\[
    D_{\boldsymbol{\ell}}\geq
    C_K-\binom{K-r}{2}
    =
    \frac{r(2K-r-1)}{2}.
\]
Consequently, 
\[
    \frac{D_{\boldsymbol{\ell}}^2}{2C_Kr}
    \geq
    \frac{r(2K-r-1)^2}{4K(K-1)}.
\]
The latter is at least $(K-1)/K$ exactly when
\[
    r(2K-r-1)^2\geq 4(K-1)^2,
\]
which we now verify. Viewing the left-hand side as a function of real $r$, its derivative is $(2K-r-1)(2K-3r-1)$, so its minimum over $1\leq r\leq K-1$ is attained at an endpoint. The endpoint values are $4(K-1)^2$ and $K^2(K-1)$, and $K^2(K-1)\geq 4(K-1)^2$. This implies that
\[
    \frac{D_{\boldsymbol{\ell}}^2}{2C_Kr}
    \geq
    \frac{K-1}{K},
\]
with equality only at the one-outlier tuples with multiplicities $(K-1,1)$.

We now apply this deterministic lower bound to the exponent in \eqref{eq:Kway_wrong_tuple_union_bound}. Set
\[
    \alpha_n=\frac{\tau^4\log n}{d}.
\]
The assumption $\tau^4=d/(b\log n)$ gives
$
    \alpha_n
    =
    \frac{1}{b}.
$
Thus the condition $b>K/(K-1)$ is exactly the condition that $\alpha_n<(K-1)/K$. Since $d/\tau^4=\alpha_n^{-1}\log n$, a summand in \eqref{eq:Kway_wrong_tuple_union_bound} with $r$ free labels is, up to the constant $C'_K$,
\[
    \exp\inbraces{
    \inparen{
    r
    -
    \frac{(D_{\boldsymbol{\ell}}-\epsilon)^2}{2C_K\alpha_n}
    +o(1)
    }
    \log n
    }.
\]
The lower bound above gives $D_{\boldsymbol{\ell}}^2/(2C_K)\geq r(K-1)/K$. Combining this with $\alpha_n<(K-1)/K$, and using that there are only finitely many multiplicity patterns, we can choose $\epsilon>0$ sufficiently small so that, uniformly over all such choices,
\[
    \liminf_{n\to\infty}
    \frac{(D_{\boldsymbol{\ell}}-\epsilon)^2}{2C_K\alpha_n}
    >
    r.
\]
Using $d/\tau^4=\alpha_n^{-1}\log n$, each summand in \eqref{eq:Kway_wrong_tuple_union_bound} is then $o(1)$, and the whole right-hand side is $o(1)$.

Combining this with \eqref{eq:Kway_true_score_LB} and \eqref{eq:Kway_E1_decomposition}, we get $\PP\cE_1=o(1)$. This finishes the proof.
\end{proof}

\subsection*{AI use declaration}

ChatGPT 5.5 assisted with consistency checks. The authors assume responsibility for all content.

\subsection*{Acknowledgments}

ZF was supported in part by NSF DMS-2142476 and a Sloan Research Fellowship. CM was supported in part by NSF grant DMS-2338062. Part of this research was carried out while TW and CM were visiting the
Institute for Mathematical and Statistical Innovation (IMSI), which is supported by the NSF under Grant No.~DMS-2425650.

\bibliography{testing}

@misc{vanhandel2016probability,
  title={Probability in High Dimension},
  author={van Handel, Ramon},
  note={APC 550 lecture notes},
  year={2016},
  url={https://web.math.princeton.edu/~rvan/APC550.pdf}
}

@article{moharrami2021planted,
  title={The planted matching problem: Phase transitions and exact results},
  author={Moharrami, Mehrdad and Moore, Cristopher and Xu, Jiaming},
  journal={The Annals of Applied Probability},
  volume={31},
  number={6},
  pages={2663--2720},
  year={2021},
  publisher={Institute of Mathematical Statistics}
}

@article{ding2023planted,
  title={The planted matching problem: Sharp threshold and infinite-order phase transition},
  author={Ding, Jian and Wu, Yihong and Xu, Jiaming and Yang, Dana},
  journal={Probability Theory and Related Fields},
  volume={187},
  number={1-2},
  pages={1--71},
  year={2023},
  publisher={Springer}
}

@article{wee2025cluster,
  title={Cluster expansion of the log-likelihood ratio: Optimal detection of planted matchings},
  author={Wee, Timothy L. H. and Mao, Cheng},
  journal={arXiv preprint arXiv:2512.14567},
  year={2025}
}

@article{addarioberry2026statistical,
  title={The statistical threshold for planted matchings and spanning trees},
  author={Addario-Berry, Louigi and Angel, Omer and Lugosi, G{\'a}bor and R{\'a}cz, Mikl{\'o}s Z. and Schramm, Tselil},
  journal={arXiv preprint arXiv:2602.07669},
  year={2026}
}

@article{even2025statistical,
  title={Statistical-computational gap in multiple Gaussian graph alignment},
  author={Even, Bertrand and Ganassali, Luca},
  journal={arXiv preprint arXiv:2512.00610},
  year={2025}
}

@article{dai2023gaussian,
  title={Gaussian database alignment and gaussian planted matching},
  author={Dai, Osman Emre and Cullina, Daniel and Kiyavash, Negar},
  journal={arXiv preprint arXiv:2307.02459},
  year={2023}
}

@article{chertkov2010inference,
  title={Inference in particle tracking experiments by passing messages between images},
  author={Chertkov, Michael and Kroc, Lukas and Krzakala, F and Vergassola, M and Zdeborov{\'a}, L},
  journal={Proceedings of the National Academy of Sciences},
  volume={107},
  number={17},
  pages={7663--7668},
  year={2010},
  publisher={National Academy of Sciences}
}

@article{schwengber2024geometric,
  title={Geometric planted matchings beyond the Gaussian model},
  author={da Rocha Schwengber, Lucas and Oliveira, Roberto Imbuzeiro},
  journal={arXiv preprint arXiv:2403.17469},
  year={2024}
}

@article{fan2026bayesian,
  title={Bayesian inference of planted matchings: Local posterior approximation and infinite-volume limit},
  author={Fan, Zhou and Wee, Timothy L. H. and Yang, Kaylee Y.},
  journal={arXiv preprint arXiv:2603.08542},
  year={2026}
}

@article{haghverdi2018batch,
  title={Batch effects in single-cell RNA-sequencing data are corrected by matching mutual nearest neighbors},
  author={Haghverdi, Laleh and Lun, Aaron T. L. and Morgan, Michael D. and Marioni, John C.},
  journal={Nature Biotechnology},
  volume={36},
  number={5},
  pages={421--427},
  year={2018},
  publisher={Nature Publishing Group}
}

@article{demetci2022scot,
  title={SCOT: single-cell multi-omics alignment with optimal transport},
  author={Demetci, Pinar and Santorella, Rebecca and Sandstede, Bj{\"o}rn and Noble, William Stafford and Singh, Ritambhara},
  journal={Journal of Computational Biology},
  volume={29},
  number={1},
  pages={3--18},
  year={2022},
  publisher={Mary Ann Liebert, Inc.}
}

@article{ma2021image,
  title={Image matching from handcrafted to deep features: A survey},
  author={Ma, Jiayi and Jiang, Xingyu and Fan, Aoxiang and Jiang, Junjun and Yan, Junchi},
  journal={International Journal of Computer Vision},
  volume={129},
  number={1},
  pages={23--79},
  year={2021},
  publisher={Springer}
}

@article{sayers2016probabilistic,
  title={Probabilistic record linkage},
  author={Sayers, Adrian and Ben-Shlomo, Yoav and Blom, Ashley W. and Steele, Fiona},
  journal={International Journal of Epidemiology},
  volume={45},
  number={3},
  pages={954--964},
  year={2016},
  publisher={Oxford University Press}
}

@book{christen2012data,
  title={Data Matching: Concepts and Techniques for Record Linkage, Entity Resolution, and Duplicate Detection},
  author={Christen, Peter},
  series={Data-Centric Systems and Applications},
  year={2012},
  publisher={Springer},
  doi={10.1007/978-3-642-31164-2}
}

@article{pomerleau2015review,
  title={A review of point cloud registration algorithms for mobile robotics},
  author={Pomerleau, Fran{\c{c}}ois and Colas, Francis and Siegwart, Roland},
  journal={Foundations and Trends in Robotics},
  volume={4},
  number={1},
  pages={1--104},
  year={2015},
  publisher={Now Publishers},
  doi={10.1561/2300000035}
}

@article{bendory2020single,
  title={Single-particle cryo-electron microscopy: Mathematical theory, computational challenges, and opportunities},
  author={Bendory, Tamir and Bartesaghi, Alberto and Singer, Amit},
  journal={IEEE Signal Processing Magazine},
  volume={37},
  number={2},
  pages={58--76},
  year={2020},
  publisher={IEEE},
  doi={10.1109/MSP.2019.2957822}
}

@inproceedings{bayati2009algorithms,
  title={Algorithms for large, sparse network alignment problems},
  author={Bayati, Mohsen and Gerritsen, Margot and Gleich, David F. and Saberi, Amin and Wang, Ying},
  booktitle={2009 Ninth IEEE International Conference on Data Mining},
  pages={705--710},
  year={2009},
  organization={IEEE}
}

@article{semerjian2020recovery,
  title={Recovery thresholds in the sparse planted matching problem},
  author={Semerjian, Guilhem and Sicuro, Gabriele and Zdeborov{\'a}, Lenka},
  journal={Physical Review E},
  volume={102},
  number={2},
  pages={022304},
  year={2020},
  publisher={APS}
}

@inproceedings{kunisky2022strong,
  title={Strong recovery of geometric planted matchings},
  author={Kunisky, Dmitriy and Niles-Weed, Jonathan},
  booktitle={Proceedings of the 2022 Annual ACM-SIAM Symposium on Discrete Algorithms (SODA)},
  pages={834--876},
  year={2022},
  organization={SIAM}
}

@book{talagrand2010mean,
  title={Mean field models for spin glasses: Volume I: Basic examples},
  author={Talagrand, Michel},
  volume={54},
  year={2010},
  publisher={Springer Science \& Business Media}
}

@book{vershynin2018,
  title={High-dimensional probability: An introduction with applications in data science},
  author={Vershynin, Roman},
  volume={47},
  year={2018},
  publisher={Cambridge university press}
}

@article{laurent2000adaptive,
  title={Adaptive estimation of a quadratic functional by model selection},
  author={Laurent, Beatrice and Massart, Pascal},
  journal={The Annals of Statistics},
  volume={28},
  number={5},
  pages={1302--1338},
  year={2000},
  doi={10.1214/aos/1015957395}
}

@article{chen2022onewayLowRankMatching,
  title={One-way matching of datasets with low rank signals},
  author={Chen, Shuxiao and Jiang, Sizun and Ma, Zongming and Nolan, Garry P. and Zhu, Bokai},
  journal={arXiv preprint arXiv:2204.13858},
  year={2022}
}

@article{zdeborova2016statistical,
  title={Statistical physics of inference: Thresholds and algorithms},
  author={Zdeborov{\'a}, Lenka and Krzakala, Florent},
  journal={Advances in Physics},
  volume={65},
  number={5},
  pages={453--552},
  year={2016},
  publisher={Taylor \& Francis},
  doi={10.1080/00018732.2016.1211393}
}

@article{guo2005mutual,
  title={Mutual information and minimum mean-square error in Gaussian channels},
  author={Guo, Dongning and Shamai, Shlomo and Verd{\'u}, Sergio},
  journal={IEEE Transactions on Information Theory},
  volume={51},
  number={4},
  pages={1261--1282},
  year={2005},
  publisher={IEEE},
  doi={10.1109/TIT.2005.844072}
}

@inproceedings{wang2022random,
  title={Random graph matching in geometric models: the case of complete graphs},
  author={Wang, Haoyu and Wu, Yihong and Xu, Jiaming and Yolou, Israel},
  booktitle={Proceedings of Thirty Fifth Conference on Learning Theory},
  series={Proceedings of Machine Learning Research},
  volume={178},
  pages={3441--3488},
  year={2022},
  publisher={PMLR}
}

@article{hou2026recovery,
  title={Recovery thresholds for hidden weighted sparse graphs},
  author={Hou, Zhe and Liu, Jingcheng},
  journal={arXiv preprint arXiv:2606.14335},
  year={2026}
}
\bibliographystyle{alpha}

\end{document}